\documentclass[12pt]{article}
\usepackage{amsfonts}
\usepackage{amssymb}
\usepackage{mathrsfs}
\usepackage{srcltx}
\usepackage[monochrome]{xcolor}
\textwidth 155mm \textheight 225mm \oddsidemargin 15pt
\evensidemargin 0pt \topmargin 0cm \headsep 0.3cm

\usepackage{amsmath}
\usepackage{amsthm}
\usepackage{amstext}
\usepackage{amsopn}
\usepackage{placeins}
\usepackage{enumitem}

\usepackage{appendix}
\usepackage{subfigure}
\usepackage{tikz}

\usepackage{graphicx}
\usepackage{bm}
\newtheorem{theorem}{Theorem}[section]
\newtheorem{lemma}[theorem]{Lemma}
\newtheorem{proposition}[theorem]{Proposition}
\newtheorem{corollary}[theorem]{Corollary}

\theoremstyle{definition}

\theoremstyle{remark}
\newtheorem{remark}[theorem]{Remark}

\numberwithin{equation}{section}

\newcommand{\ba}{\begin{array}}
\newcommand{\ea}{\end{array}}
\newcommand{\f}{\frac}

\newcommand{\la}{\lambda}
\newcommand{\R}{{\mathbb R}}
\newcommand{\ds}{\displaystyle}

\begin{document}
\date{}
\title{ \bf\large{Evolution of dispersal in advective patchy environments}\thanks{S. Chen is supported by National Natural Science Foundation of China (Nos. 12171117, 11771109) and Shandong Provincial Natural Science Foundation of China (No. ZR2020YQ01),  J. Shi is supported by US-NSF grant DMS-1715651 and DMS-1853598, and Z. Shuai is supported by US-NSF grant DMS-1716445. 
}}
\author{
Shanshan Chen\footnote{Email: chenss@hit.edu.cn}\\[-1mm]
{\small Department of Mathematics, Harbin Institute of Technology}\\[-2mm]
{\small Weihai, Shandong 264209, P. R. China}\\[2mm]
Junping Shi\footnote{Corresponding author.  Email: jxshix@wm.edu}\\[-1mm]
{\small Department of Mathematics, William \& Mary}\\[-2mm]
{\small Williamsburg, Virginia 23187-8795, USA}\\[2mm]
Zhisheng Shuai\footnote{Email: shuai@ucf.edu}\\[-1mm]
{\small Department of Mathematics, University of Central Florida}\\[-2mm]
{\small Orlando, Florida 32816, USA}\\[2mm]
Yixiang Wu\footnote{Email: yixiang.wu@mtsu.edu} \\[-1mm]
{\small Department of Mathematics, Middle Tennessee State University}\\[-2mm]
{\small Murfreesboro, Tennessee 37132, USA}
}

\maketitle


\begin{abstract}
We study a two-species competition model in a patchy advective environment, where the species are subject to both directional drift and undirectional random dispersal between patches and there are losses of individuals in the downstream end (e.g., due to the flow into a lake or ocean). The two competing species are assumed to have the same growth rates but different advection and random dispersal rates. We focus our studies on the properties of an associated eigenvalue problem which characterizes the extinction/persistence dynamics of the underlying patch population model. We also derive conditions on the advection and random dispersal rates under which a mutating species can or cannot invade the resident species.

\noindent {\bf Keywords}: patch population model; advective environment; competition model; invasion analysis; evolution of dispersal.\\[2mm]
\noindent {\bf MSC 2020}:
92D25, 92D40, 34C12, 34D23, 37C65.
\end{abstract}

\section{Introduction}
The organisms in streams are subject to both directional drift and undirectional random diffusion. Intuitively, the stream flow takes the organisms to the downstream locations which are often fatal to them, while random diffusion may drive them to favorable locations in the upstream. How the joint force of directed and undirectional movements affects the extinction and persistence of a biological species have attracted the attention of many researchers \cite{HuangJin, jin2011seasonal, lou2014evolution,  lutscher2006effects, lutscher2007spatial, lutscher2005effect, speirs2001population}.

In the framework of discrete patch models, a population in a stream environment with logistic type growth can be described by the following system (\cite{chen2019spectral,cosner1996variability,li2010global,Lu1993}):
\begin{equation}\label{pat-s}
\begin{cases}
\ds\frac{du_i}{dt}=\sum_{j=1}^nL_{ij}u_j+u_i(r_i-u_i),&i=1,\dots,n,\;\;t>0,\\
\bm u(0)=\bm u_0\ge(\not\equiv)\bm \ 0,
\end{cases}
\end{equation}
where $n\ge 2$ is the number of patches; $\bm u=(u_1,\dots,u_n)$, and $u_i$ denotes the population density in patch $i$; $L_{ij}$ is the movement rate of individuals from patch $j$ to patch $i$; and $r_i$ is the  intrinsic growth rate in patch $i$. The connection matrix $L=\left(L_{ij}\right)$ depends on the topology of the stream and the directional and undirectional movement rates of the species. In this paper, we only consider the population dynamics in a stream with free flow from upstream end (patch $i=1$) to the downstream end (patch $i=n$). The following three ecological scenarios at the downstream end are typical \cite{lou2014evolution,lutscher2006effects, speirs2001population}:
\begin{enumerate}
 \item [(i)] \textbf{Stream to lake}. The lake environment is as favorable as the stream environment for the species, and individuals can return to the stream from the lake by diffusion. Moreover, the diffusive flux into and from the lake balances;
   \item [(ii)] \textbf{Stream to ocean}. The ocean environment is fatal to the species in the stream, and individuals cannot return to the stream  from the ocean;
 \item [(iii)] \textbf{Inland stream}. Individuals cannot move in
or out through the downstream end.
 \end{enumerate}
The above cases (i)-(iii) correspond to three types of movements  at the downstream end, see $(a)$-$(c)$ in Fig. \ref{fig1}.
\begin{figure}[htbp]
\centering\includegraphics[width=0.6\textwidth]{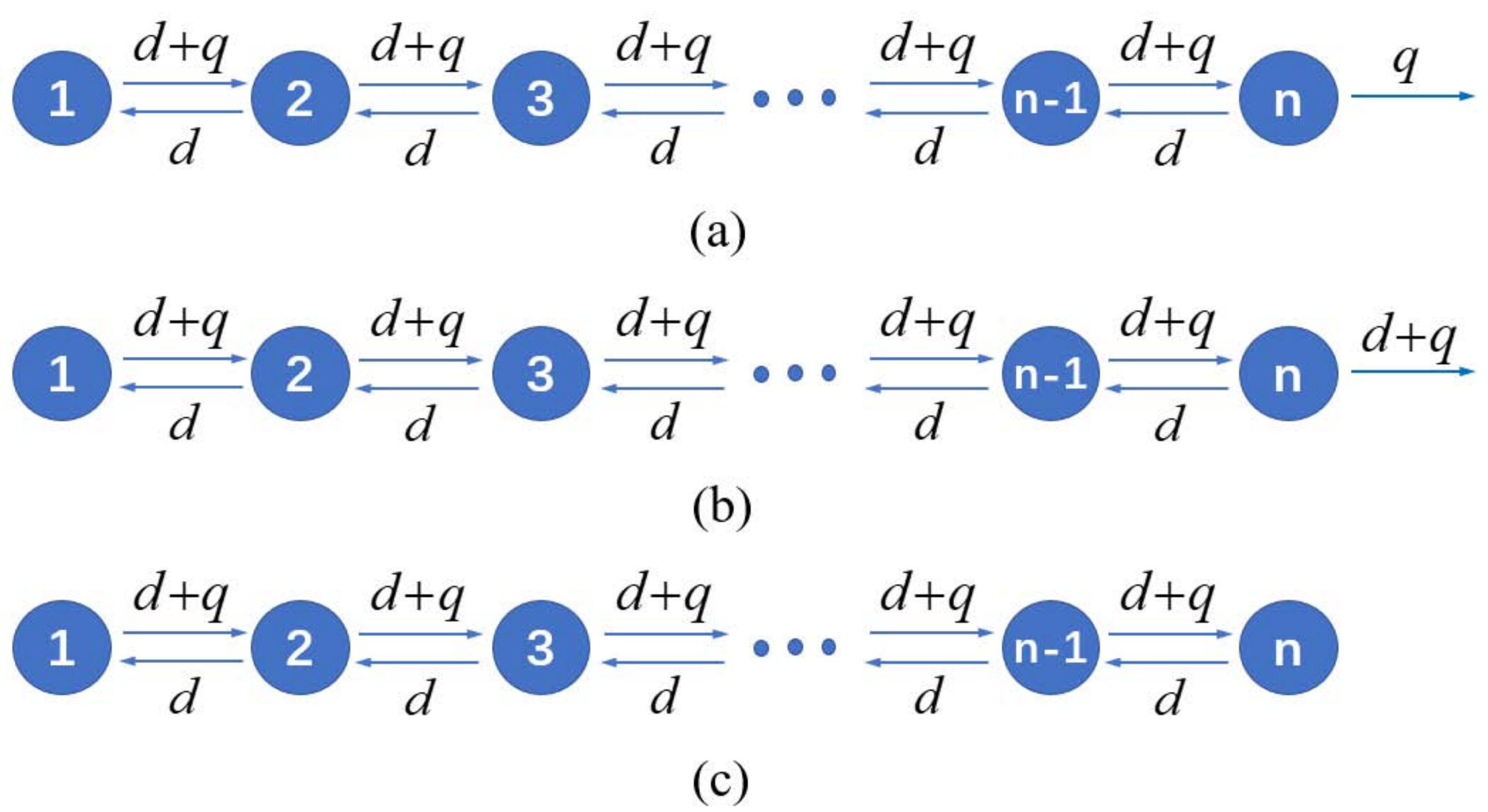}
\caption{Advective and diffusive movement of the species among patches. Here, $(a)$: stream to lake; $(b)$: stream to ocean; and $(c)$: inland stream. Here $d>0$ is the diffusion rate, and $q\geq 0$ is the advection rate.
  \label{fig1}}
\end{figure}

The movement of the species among patches in Fig. \ref{fig1} can be described by an $n\times n$ matrix $L=d D+q Q$ with $d$ and $q$ being the diffusion and advection rates, respectively,  and $D=(D_{ij})$ representing the diffusion pattern and $Q=(q_{ij})$ describing the directed movement pattern of individuals.  Then the matrices $D$ and $Q$ satisfy one of the following three assumptions:
\begin{itemize}[leftmargin=*,align=left]
\item[\textbf{H}1.]  Case $(a)$: stream to lake. The matrix $D=(D_{ij})$ is given by
\begin{equation}\label{M}
\begin{split}
D_{ij}=&\begin{cases}
1,&i=j-1\;\text{or}\;i=j+1,\\
-2,&i=j=2,\dots,n-1,\\
-1,&i=j=1,n,\\
0, &\text{otherwise},
\end{cases}
\end{split}
\end{equation}
and  $Q=(Q_{ij})$ is given by
\begin{equation}\label{A}
\begin{split}
Q_{ij}=&\begin{cases}
1,&i=j+1,\\
-1,&i=j=1,\dots,n,\\
0,&\text{otherwise}.
\end{cases}
\end{split}
\end{equation}

\item[\textbf{H}1$^*$.]  Case (b): stream to ocean. The same as \textbf{H}1 except that $D_{nn}=-2$.

\item[\textbf{H}1$^{**}$.] Case (c): inland stream. The same as \textbf{H}1 except that  $Q_{nn}=0$.
\end{itemize}

We will consider the following two-species Lotka-Volterra competition model in a spatially homogeneous patchy stream environment:
\begin{equation}\label{pat-cp}
\begin{cases}
\ds\frac{du_i}{dt}=\ds\sum_{j=1}^{n} (d_1 D_{ij}+q_1Q_{ij})u_j+u_i(r-u_i-v_i), &i=1,\dots,n,\;\;t>0,\\
\ds\frac{dv_i}{dt}=\ds\sum_{j=1}^{n}(d_2 D_{ij}+q_2Q_{ij})v_j+v_i(r-u_i-v_i),&i=1,\dots,n,\;\; t>0, \\
\bm u(0)=\bm u_0\ge(\not\equiv)\;\bm 0,\;\bm v(0)=\bm v_0\ge(\not\equiv)\;\bm0.
\end{cases}
\end{equation}
Here the growth rate $r$ is assumed to be a positive constant for all patches, and the two species have the same interspecific and intraspecific competition coefficients (normalized to be $1$ for simplicity). So the two competing species are identical except their diffusion and advection rates. The  connection matrices are
\begin{equation}\label{Lk}
L^{(k)}=\left(L^{(k)}_{ij}\right)=d_kD+q_kQ, \;\; k=1,2,
\end{equation}
where $\left(D_{ij}\right)$ and $\left(Q_{ij}\right)$ are defined in \eqref{M} and \eqref{A} for case (a)-(c), respectively. For the purpose of brevity, we will restrict our attention to cases (a) and (b) where there are losses of individuals in the downstream end, and case (c) will be studied in a follow-up paper.

Our work is largely motivated by previous researches on population dynamics in streams in the framework of partial differential equation models \cite{cantrell2004spatial, lam2015evolution, lam2016emergence,lou2014evolution, lou2016qualitative, lou2015evolution, ma2020evolution, vasilyeva2012flow, zhao2016lotka, zhou2016lotka}. The reaction-diffusion-advection model analogous to \eqref{pat-s} is
\begin{equation}\label{con-s}
\begin{cases}
u_t=\tilde{d}u_{xx}-\tilde{q} u_x+u[r(x)-u], &0<x<l,\;\;t>0,\\
\tilde{d} u_x(0,t)-\tilde{q} u(0,t)=0,& t>0, \\
\tilde{d} u_x(l,t)-\tilde{q} u(l,t)=-\beta \tilde{q} u(l,t),& t>0,\\
u(x,0)\ge(\not \equiv)0.
\end{cases}
\end{equation}
Here the species $u$ lives in a stream represented by $0\le x\le l$; $\tilde{d}$ is the diffusion rate and $\tilde{q}$ is the advection rate of the species;  $x=0$ is the upstream end and $x=l$ is the downstream end. The no-flux boundary condition is imposed at the upstream end $x=0$, which means that individuals cannot move in or out through the the upstream boundary. A parameter $\beta$ is introduced for the boundary condition at the downstream end $x=l$ to measure the loss rate of individuals. The corresponding three cases in Fig. \ref{fig1} are as follows: $(1)$ free-flow boundary condition $u_x(l,t)=0$ for $\beta=1$; $(2)$ hostile boundary condition $u(l,t)=0$ for $\beta\to\infty$; and $(3)$ no-flux boundary condition $\tilde{d} u_x(l,t)-\tilde{q}  u(l,t)=0$ for $\beta=0$. 

The reaction-diffusion-advection version of the two species competition model \eqref{pat-cp} over a stream with different boundary conditions in the downstream end has been studied by many authors \cite{lam2015evolution,lou2014evolution, lou2016qualitative, lou2015evolution, ma2020evolution, vasilyeva2012flow, zhao2016lotka, zhou2016lotka}.
In the seminal work of Hastings \cite{hastings1983can} and Dockery \emph{et al.} \cite{dockery1998evolution}, it has been shown that a mutating species can invade if and only if it has a smaller diffusion rate when both species have no directional movement and are identical except for the diffusion rates. However in a stream environment with free-flow boundary conditions \cite{lou2014evolution,vasilyeva2012flow,zhou2018global} or inland boundary conditions \cite{lam2015evolution,lou2018coexistence,lou2016qualitative, lou2015evolution,zhou2016lotka}, the species with larger diffusion rate and/or smaller advection rate may be selected. The Dirichlet boundary condition case seems to be less studied, and the authors in \cite{yan2022competition} showed that both coexistence and bi-stability are possible numerically.

Model \eqref{pat-s} is a discrete version of  \eqref{con-s}. Indeed, if we divide the interval $[0, l]$ into $n$ sub-intervals with equal length $\Delta x=l/n$ and endpoints $0, 1,..., n$.  At endpoints $i=1, ..., n$, we discretize $u_{xx}$ and $u_x$ to obtain the following equation:
\begin{equation}\label{dis}
    \frac{ d u_i}{dt}=\tilde{d}\frac{u_{i+1}-2u_i+u_{i-1}}{(\Delta x)^2}-\tilde{q}\frac{u_i-u_{i-1}}{\Delta x}+u_i(r_i-u_i), \ \ i=1,..., n,
\end{equation}
where $u_i(t)$ is the population density at endpoint $i$. Note that for $i=2,\cdots,n-1$,  \eqref{dis} is the same as \eqref{pat-s} with $d=\tilde{d}/(\Delta x)^2$ and $q=\tilde{q}/\Delta x$. At the upstream end $x=0$, we discretize the no-flux boundary condition to obtain
$$
\tilde{d}\frac{u_1-u_0}{\Delta x}-\tilde{q}u_0=0.
$$
Substituting it into \eqref{dis} for $i=1$, we obtain \eqref{pat-s} for $i=1$. If the downstream end $x=l$ is imposed with the free-flow boundary condition, then the discrete version is
$$
\tilde{d}\frac{u_{n+1}-u_n}{\Delta x}=0.
$$
Substituting it into \eqref{dis} for $i=n$, we obtain \eqref{pat-s} for $i=n$.  If the downstream end is associated with the hostile boundary condition, we view an extra point $n+1$ as the endpoint of the stream and obtain $u_{n+1}=0$. Again, we can substitute it into \eqref{dis} for $i=n$ to obtain the corresponding equation in  \eqref{pat-s}. The no-flux boundary condition at $x=l$ can be treated similarly.

The discrete patch model \eqref{pat-s} and the two-species competition model \eqref{pat-cp} with the dispersal matrix $D,Q$ defined as in \eqref{M}-\eqref{A} approximate the reaction-diffusion-advection model \eqref{con-s} and the corresponding two-species competition model as the number of patches gets large and the total length of the system remains fixed. Similar approach for spatial population dynamics has been used in \cite{deangelis2016dispersal,keitt2001allee,levin1976population, Owen2001}, and comparison of continuous and discrete space models were also made in these work. While the two types of models often produce similar results, it is known that they can also have different outcomes when there is Allee effect in the system \cite{keitt2001allee,Owen2001}.

We will investigate model \eqref{pat-cp} in the approach of adaptive dynamics framework \cite{dieckmann1996dynamical, geritz1998evolutionarily}, which is the method adopted in \cite{lou2014evolution,vasilyeva2012flow}. For this purpose, we will first impose conditions on $d_1$ and $q_1$ such that  $\bm u-$species is established as a semi-trivial equilibrium $E_1=(\bm u^*, \bm 0)$ (the $\bm v-$only equilibrium will be denoted by $E_2=(\bm 0, \bm v^*)$) when there is no $\bm v-$species in the system. Then we investigate the stability/instability of $E_1$ when $d_2$ and $q_2$ varies. We show that there is a curve $q=q^*_{\bm{r-u^*}}(d)$ passing through $(d_1, q_1)$ and dividing the $d-q$ plane into two regions such that $E_1$ is stable if $(d_2, q_2)$ is above the curve while $E_1$ is unstable if $(d_2, q_2)$ is below it. When the downstream end is coupled with no-flux boundary conditions (case (a)), we show that the curve $q=q^*_{\bm{r-u^*}}(d)$ is strictly increasing, and $\bm v$ species can invade if it has larger diffusion or smaller advection rate. If the hostile boundary conditions (case (b)) are imposed at the downstream end, we show that smaller advection rate is selected. If $q_2=q_1$ and $d_2$ is close to $d_1$,   $[q^*_{\bm{r-u^*}}(d)]'|_{d=d_1}>(<)0$ means that species $\bm v$ can invade if and only if $d_2> (<) d_1$. We show that $\left[q_{\bm{r-u^*}}^*(d)\right]'|_{d=d_1}$ changes sign in case (b) as $(d_1, q_1)$ varies, which indicates whether smaller or larger diffusion rate is favored depends (for some advection rate, there seems to be an intermediate diffusion rate which is a convergence stable strategy). For both cases, we find parameter ranges of $d_2$ and $q_2$ such that competitive exclusion happens. We also give conditions under which coexistence or bi-stability of the two species occurs.

The global dynamics of the single species patch model \eqref{pat-s} is well-known. In \cite{cosner1996variability,li2010global,Lu1993}, it has been shown that  either the trivial equilibrium of \eqref{pat-s} is globally stable or the model  has a globally asymptotically stable positive equilibrium. There are also many research works on the two-species competition model \eqref{pat-cp}, especially when the number of patches $n$ is small ($n=2$ or $3$). We refer interested readers to the works on two-patch models without directional dispersal \cite{cheng2019coexistence,gourley2005two,lin2014global}  and the ones with directional dispersal \cite{Hamida2017,jiang2020two,Jiang-Lam-Lou2021,lou2019ideal, noble2015evolution,xiang2019evolutionarily}. More recently, the competition on a river network was considered for three-patch models \cite{jiang2020two,Jiang-Lam-Lou2021}. When $(d_1, q_1)$ is a multiple of $(d_2, q_2)$, complete global dynamics of \eqref{pat-cp} has been classified in our recent work \cite{chen2021global}. We refer to \cite{stephen2007ideal, cantrell2012evolutionary, cantrell2017evolution, kirkland2006evolution, levin1984dispersal,  mcpeek1992evolution, smith2008monotone} and the references therein for more works on competition models in patchy environment. 

Our paper is organized as follows. In Section 2, we introduce the terminology and state some useful results; in Section 3, we study the properties of the principal eigenvalue of an associated eigenvalue problem which determines the existence/nonexistence  of positive equilibrium of \eqref{pat-s}; and in Section 4, we perform invasion analysis for the two-species competition model \eqref{pat-cp}.  In Section 5, we present some numerical simulations and formulate some conjectures on coexistence and bi-stability about the solutions of the model.

\section{Preliminaries}
Let $\bm u=(u_1,\dots,u_n)^T\in \R^n$ be a vector. We write $\bm u\gg\bm0$ ($u\ge\bm0$) if $u_i>0$ ($u_i\ge 0$) for all $1\le i\le n$, and $\bm u>\bm0$ if $\bm u\ge \bm0$ and $\bm u\neq \bm0$. Let $A=(a_{ij})_{n\times n}$ be a real-valued square matrix and let $\sigma(A)$ be the set of all eigenvalues of $A$.
The {\it spectral bound} $s(A)$ of $A$ is defined as
$$
s(A)=\max\{{\rm Re} (\lambda): \lambda\in\sigma(A)\}.
$$
The matrix $A$ is {\it reducible} if we can partition $\{1, 2, \dots, n\}$ into two nonempty subsets $E$ and $F$ such that $a_{ij}=0$ for all $i\in E$ and $j\in F$. Otherwise $A$ is {\it irreducible}. A real-valued square matrix $A$ is called \emph{essentially nonnegative} if all its off-diagonal entries are nonnegative. If $A$ is an irreducible essentially nonnegative matrix, then by the Perron-Frobenius Theorem \cite{Li2002JMB}, $s(A)$ is an eigenvalue of $A$ (called the principal eigenvalue), which is the unique eigenvalue corresponding with a positive eigenvector.  It is easy to see that $D$ and $L$ defined in the Introduction are irreducible and essentially nonnegative. If $D$ satisfies \textbf{H}1 or \textbf{H}1$^{**}$, then $s(D)=0$ corresponding with a positive eigenvector $(1/n, \dots, 1/n)$; and if $D$ satisfies \textbf{H}1$^{*}$, then $s(D)<0$.

Denote by $\la_1(d,q,\bm r)$ the principal eigenvalue of the matrix $dD+qQ+diag(r_i)$, where $\bm r=(r_1,\dots,r_n)$ is a real-valued vector, so  $\la_1(d,q,\bm r)$  satisfies the following eigenvalue problem:
\begin{equation}\label{eigen}
\ds\sum_{j=1}^{n}(dD_{ij}+qQ_{ij})\phi_j+r_i\phi_i=\la\phi_i. \;\;i=1,\dots,n,\\
\end{equation}

The global dynamics of the single species model \eqref{pat-s} is determined by the sign of   $\la_1(d,q,\bm r)$ (see \cite{cosner1996variability,li2010global,Lu1993} for the proof):

\begin{lemma}\label{DS-single}
Suppose that  \textbf{H}1, \textbf{H}1$^{*}$ or \textbf{H}1$^{**}$ holds. Let $\la_1(d,q,\bm r)$ be the principal eigenvalue of \eqref{eigen}. If $\la_1(d,q,\bm r)\le0$, then the trivial equilibrium $\mathbf{0}$ of \eqref{pat-s} is globally asymptotically stable; and if $\la_1(d,q,\bm r)>0$, then model \eqref{pat-s} admits a unique positive equilibrium $\bm u^*\gg\bm 0$, which is globally asymptotically stable.
\end{lemma}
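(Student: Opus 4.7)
The plan is to exploit the cooperative structure of \eqref{pat-s}. Since $D$ and $Q$ have nonnegative off-diagonal entries (by \eqref{M}--\eqref{A}), the Jacobian of the right-hand side at any $\bm u \in \R^n_+$ is essentially nonnegative, so the semiflow generated by \eqref{pat-s} is order-preserving on $\R^n_+$. Standard comparison with the constant supersolution $M\bm 1$ for $M=\max_i r_i$ shows that all solutions are globally defined and bounded, so $\omega$-limit sets are nonempty, compact, and forward invariant.

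For the case $\la_1(d,q,\bm r)\le 0$, I would use the positive left principal eigenvector. Let $A=dD+qQ+\mathrm{diag}(r_i)$ and pick $\bm\psi\gg\bm 0$ with $\bm\psi^T A=\la_1\bm\psi^T$. Define the linear functional $V(\bm u)=\sum_i\psi_i u_i$. Along any nonnegative solution of \eqref{pat-s},
\begin{equation*}
\frac{dV}{dt}=\la_1 V-\sum_{i=1}^n \psi_i u_i^2\le-\sum_{i=1}^n\psi_i u_i^2\le 0,
\end{equation*}
so $V$ decreases monotonically to a finite limit, forcing $\int_0^\infty\sum_i\psi_i u_i^2\,dt<\infty$; combined with boundedness of $\bm u$ and its time derivative this yields $\bm u(t)\to\bm 0$. (When $\la_1<0$ the cruder bound $\bm u(t)\le e^{At}\bm u_0\to\bm 0$ from the sublinearity of $-u_i^2$ already suffices.)

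For the case $\la_1(d,q,\bm r)>0$, let $\bm\phi\gg\bm 0$ be the positive right eigenvector, so $A\bm\phi=\la_1\bm\phi$. For sufficiently small $\varepsilon>0$, the vector $\varepsilon\bm\phi$ satisfies $(A\bm u)_i-u_i^2=\varepsilon\la_1\phi_i-\varepsilon^2\phi_i^2>0$ componentwise, hence $\varepsilon\bm\phi$ is a strict subsolution, while $M\bm 1$ with $M$ large is a supersolution. By monotonicity of the semiflow, the solutions starting from $\varepsilon\bm\phi$ and $M\bm 1$ are monotone in $t$ and converge to equilibria $\bm u^*_-$ and $\bm u^*_+$ respectively, both positive. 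Uniqueness of the positive equilibrium follows from the sublinearity (concavity) of the logistic nonlinearity together with irreducibility of $A$: if $\bm u^*\ne \bm u^{**}$ were two positive equilibria, consider the largest $\kappa>0$ with $\kappa \bm u^*\le \bm u^{**}$, then strict sublinearity at $\kappa \bm u^*$ gives a contradiction with the equilibrium equations (this is the standard Krasnoselskii argument used in \cite{cosner1996variability,li2010global,Lu1993}). Thus $\bm u^*_-=\bm u^*_+=:\bm u^*$.

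To upgrade convergence from initial data on the $\bm\phi$-ray to arbitrary $\bm u_0>\bm 0$, I would use irreducibility: for any nontrivial nonnegative initial datum, $\bm u(t)\gg\bm 0$ for all $t>0$ by a standard strong-positivity argument for cooperative irreducible systems, so there exist small $\varepsilon$ and large $M$ with $\varepsilon\bm\phi\le\bm u(t_0)\le M\bm 1$; monotonicity then sandwiches $\bm u(t)$ between two trajectories converging to $\bm u^*$. The main technical step to check carefully is the boundary case $\la_1=0$, where the linearized decay estimate is insufficient; the Lyapunov functional $V$ above is what resolves it cleanly. Since the result is classical and already cited, I would present this as a short sketch rather than a full derivation.
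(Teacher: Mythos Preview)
The paper does not supply a proof of this lemma; it simply defers to the references \cite{cosner1996variability,li2010global,Lu1993}. Your sketch is a correct outline of the standard cooperative-system argument found there: boundedness via a constant supersolution, global attraction of $\bm 0$ via the left-eigenvector Lyapunov functional $V(\bm u)=\sum_i\psi_iu_i$ when $\la_1\le 0$ (which, as you note, is exactly what handles the borderline case $\la_1=0$), and monotone sub/supersolution iteration together with the Krasnoselskii sublinearity argument for existence, uniqueness and global stability of $\bm u^*$ when $\la_1>0$.

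One small slip worth fixing: under \textbf{H}1$^{**}$ the $n$-th row sum of $L=dD+qQ$ equals $q>0$, so $F_n(M\bm 1)=M(q+r_n-M)$ and the choice $M=\max_i r_i$ need not give a supersolution; you need $M\ge q+\max_i r_i$. This is consistent with your later phrase ``$M$ large'' and does not affect the argument.
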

For further applications, we need the following result about the monotonicity of the spectral bound/principal eigenvalue  \cite{altenberg2012resolvent, chen2019spectral}.
\begin{lemma}\label{theorem_quasi}
Let $A=(a_{ij})_{n\times n}$ be an irreducible essentially nonnegative  matrix and  $R=\text{diag}(r_i)$ be a real diagonal  matrix.   Then the following results hold:
\begin{enumerate}
\item [{\rm (i)}] If $s(A)<0$, then
$$
\ds\frac{d}{d\mu}s(\mu A+R)<0
$$
for $\mu\in (0, \infty)$; Moreover,
 $$
 \lim_{\mu\rightarrow 0}s(\mu A+R)=\max_{1\le i\le n}\{r_i\} \text{  and }\; \lim_{\mu\rightarrow\infty}s(\mu A+R)=-\infty;
 $$
\item [{\rm (ii)}] If $s(A)=0$, then
$$
\ds\frac{d}{d\mu} s(\mu A+R)\le 0
$$
for $\mu\in (0, \infty)$ and the equality holds if and only $r_1=\dots=r_n$; Moreover,
$$
\lim_{\mu\rightarrow 0}s(\mu A+R)=\max_{1\le i\le n}r_i \;\;
\text{and} \;\; \lim_{\mu\rightarrow\infty}s(\mu A+R)=\sum_{i=1}^n{\theta_ir_i},
$$
where $\theta_i\in (0, 1)$, $1\le i\le n$, is determined by $A$ and $\ds\sum_{i=1}^n\theta_1=1$ (if $A$ has each column sum equaling zero, then $\bm\theta=(\theta_1,\dots,\theta_n)^T$ is a positive eigenvector of $A$ corresponding to eigenvalue $0$).
\end{enumerate}
\end{lemma}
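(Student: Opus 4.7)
The plan is to combine Perron eigenvalue perturbation theory with the convexity of $\mu\mapsto s(\mu A+R)$ and an analysis of the boundary limits. Since $\mu A+R$ is irreducible and essentially nonnegative, its principal eigenvalue $\lambda(\mu):=s(\mu A+R)$ is a simple eigenvalue, analytic in $\mu>0$, with positive right and left Perron eigenvectors $\phi(\mu),\psi(\mu)\gg \mathbf{0}$, which I normalize by $\psi^T\phi=1$. Differentiating the eigenvalue equation $(\mu A+R)\phi=\lambda\phi$ in $\mu$ and pairing with $\psi$ yields the standard formula $\lambda'(\mu)=\psi^T A\phi$. Reading $(A\phi)_i=(\lambda-r_i)\phi_i/\mu$ off the eigenvalue equation, this simplifies to the key identity
\[
\mu\,\lambda'(\mu)=\lambda(\mu)-\bar r(\mu),\qquad \bar r(\mu):=\sum_{i=1}^{n}\psi_i(\mu)\phi_i(\mu)\,r_i,
\]
in which $\bar r(\mu)$ is a strictly positive convex combination of $r_1,\dots,r_n$.

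Next I would compute the boundary limits. At $\mu=0$, $\mu A+R=R$ is diagonal, so $\lambda(0)=\max_i r_i$. As $\mu\to\infty$, factoring $\mu A+R=\mu(A+R/\mu)$ and using continuity of the spectral bound gives $\lambda(\mu)/\mu\to s(A)$; in case (i) this forces $\lambda(\mu)\to-\infty$. In case (ii), $s(A)=0$, and passing $\mu\to\infty$ in $\lambda(\mu)=\bar r(\mu)+\mu\lambda'(\mu)$, together with the convergence of $\phi(\mu),\psi(\mu)$ to right/left Perron eigenvectors of $A$ at eigenvalue zero, produces $\lambda(\mu)\to\sum_i\theta_i r_i$ with $\theta_i$ the appropriately normalized product of the coordinates of these limit eigenvectors.

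The crux is to prove that $\mu\mapsto\lambda(\mu)$ is convex on $(0,\infty)$. I would invoke Cohen's theorem: the spectral bound of an irreducible essentially nonnegative matrix is a convex function of its diagonal entries. Applied to $h(t):=s(A+tR)$, this gives a convex function of $t$; and since $\lambda(\mu)=\mu\,h(1/\mu)$, a direct computation yields $\lambda''(\mu)=\mu^{-3}h''(1/\mu)\ge 0$. With convexity in hand, monotonicity follows quickly. In case (i), a convex function with $\lim_{\mu\to\infty}\lambda(\mu)=-\infty$ must satisfy $\lambda'(\mu)<0$ for all $\mu>0$, since $\lambda'(\mu_0)\ge 0$ would force $\lambda'\ge 0$ on $[\mu_0,\infty)$ by convexity and contradict $\lambda\to-\infty$. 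In case (ii), convexity together with $\lambda(0)=\max_i r_i$ and $\lim_{\mu\to\infty}\lambda(\mu)=\sum_i\theta_i r_i\le\max_i r_i$ force $\lambda'(\mu)\le 0$; and if $\lambda'(\mu_0)=0$, then $\lambda$ is constant on $[\mu_0,\infty)$, hence by analyticity of the simple eigenvalue constant on all of $(0,\infty)$, forcing $\max_i r_i=\sum_i\theta_i r_i$, which with $\theta_i>0$ summing to $1$ is possible only if $r_1=\dots=r_n$.

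The main obstacle is establishing convexity. Cohen's theorem encapsulates it cleanly; absent such a tool, one would have to compute $\lambda''(\mu)$ directly via the reduced resolvent on the complement of the Perron eigenspace and verify non-negativity by hand, which is delicate for non-symmetric matrices. A complementary route available in case (ii), bypassing convexity entirely, is probabilistic: similarity-transform $A$ into the generator of a continuous-time Markov chain $X^{\mu}_\cdot$ and exploit the Feynman--Kac representation $\lambda(\mu)=\lim_{t\to\infty}t^{-1}\log\mathbb{E}_i\bigl[\exp\bigl(\int_0^t r(X^{\mu}_s)\,ds\bigr)\bigr]$, deducing monotonicity via a coupling of chains running at different speeds.
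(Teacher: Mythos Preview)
The paper does not prove this lemma; it is stated with citations to Altenberg (2012) and the authors' own companion paper Chen--Shi--Shuai--Wu (2022), so there is no in-paper argument to compare against. Your proposal is a legitimate and essentially correct route, resting on three standard pillars: the Perron perturbation formula $\lambda'(\mu)=\psi^{T}A\phi$, Cohen's theorem on convexity of the spectral bound in the diagonal (yielding $\lambda''(\mu)=\mu^{-3}h''(1/\mu)\ge 0$ via the perspective transform $\lambda(\mu)=\mu\,h(1/\mu)$), and real-analyticity of simple eigenvalues. This is different in flavor from the cited sources---Altenberg's argument is operator-theoretic via resolvent-positive operators, and the companion paper advertises graph-theoretic and variational proofs---but your convexity approach is clean and self-contained once Cohen's theorem is granted.

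There is one ordering gap worth fixing. In case~(ii) you compute $\lim_{\mu\to\infty}\lambda(\mu)=\sum_i\theta_ir_i$ by ``passing $\mu\to\infty$ in $\lambda=\bar r+\mu\lambda'$\,'', which tacitly requires $\mu\lambda'(\mu)\to 0$; but this is not yet established, and your subsequent monotonicity argument \emph{uses} that limit, so the logic is circular as written. The repair is easy: to get $\lambda'\le 0$ you only need $\lambda(\mu)/\mu\to s(A)=0$, since $\lambda'(\mu_0)=c>0$ plus convexity forces $\lambda(\mu)\ge\lambda(\mu_0)+c(\mu-\mu_0)$ and hence $\liminf\lambda/\mu\ge c>0$. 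With $\lambda'\le 0$ in hand, compute the limit directly by pairing $(\mu A+R)\phi=\lambda\phi$ with the left null vector $\psi^{*}$ of $A$ to obtain $\lambda=(\psi^{*T}R\phi)/(\psi^{*T}\phi)$ and then let $\phi\to\phi^{*}$. For the equality characterization you can bypass the $\mu\to\infty$ limit altogether: $\lambda'\equiv 0$ gives $\lambda(\mu)=\bar r(\mu)$ for every $\mu$, and since $\bar r(\mu)$ is a strictly positive convex combination of the $r_i$ while $\lambda(\mu)\to\max_i r_i$ as $\mu\to 0^{+}$, equality forces $r_1=\dots=r_n$.
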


We will use the monotone dynamical system theory \cite{hess,hsu1996competitive,lam2016remark,smith2008monotone} to investigate the global dynamics of the Lotka-Volterra competition system \eqref{pat-cp}. Let $X=\mathbb{R}_+^n\times\mathbb{R}_+^n$ equipped with an order $\le_K$ generated by the cone $K=\mathbb{R}_+^n\times(-\mathbb{R}_+^n)$. That is, for $\bm x=(\bm u_1, \bm v_1), \bm y=(\bm u_2, \bm v_2)\in X$, we say $\bm x\le_K \bm y$ if $\bm u_1\le \bm u_2$ and $\bm v_1\ge \bm v_2$; $\bm x<_K \bm y$ if $\bm x\le_K \bm y$ and $\bm x\neq \bm y$. 
The solutions of \eqref{pat-cp} induce a strictly monotone dynamical system in $X$: for two initial data $(\bm u_{1,0}, \bm v_{1,0})<_K(\bm u_{2,0}, \bm v_{2,0})$, the corresponding solutions of \eqref{pat-cp} satisfy $(\bm u_1(t), \bm v_1(t))<_K (\bm u_2(t), \bm v_2(t))$ for all $t\ge 0$. By the strictly monotone dynamical system theory, the global dynamics of \eqref{pat-cp} is largely determined by the local/linearized stability of the semi-trivial equilibria $E_1$ and $E_2$:

\begin{enumerate}
 \item if $E_2$ is unstable and \eqref{pat-cp} has no positive equilibrium, then $E_1$ is globally asymptotically stable;  if $E_1$ is unstable and \eqref{pat-cp} has no positive equilibrium, then $E_2$ is globally asymptotically stable;
 \item if $E_1$ and $E_2$ are both unstable, then \eqref{pat-cp} has at least one stable positive equilibrium, which is globally asymptotically stable if it is unique;
 \item if $E_1$ and $E_2$ are both locally asymptotically stable, then \eqref{pat-cp} has at least one unstable positive equilibrium.
\end{enumerate}

\section{Persistence of a single species}
In this section, we consider the mutual effects of the  diffusion and advection rates on the dynamics of the single species model \eqref{pat-s}. By Lemma \ref{DS-single}, the global dynamics of the model is determined by the sign of $\lambda_1(d, q, \bm r)$. In this section, we study the properties of $\lambda_1(d, q, \bm r)$ with respect to $d$ and $q$ in cases (a)-(b).

\subsection{Monotonicity of $\la_1(d,q,\bm r)$ in $q$}
In this subsection, we study the monotonicity of $\la_1(d,q,\bm r)$ with respect to the advection rate $q$.

\begin{lemma}\label{prop-p}
Suppose that  \textbf{H}1 or \textbf{H}1$^{*}$ holds. Let $\la_1(d,q,\bm r)$ be the principal eigenvalue of \eqref{eigen}. Then for fixed $d>0$, $\la_1(d,q,\bm r)$ is strictly decreasing with respect to $q$ in $[0,\infty)$. Moreover,
\begin{equation}
\lim_{q\to0}\la_1(d,q,\bm r)=\la_1(d,0,\bm r) \text{  and  }
\lim_{q\to\infty}\la_1(d,q,\bm r)=-\infty.
\end{equation}
\end{lemma}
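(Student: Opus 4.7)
My plan is to symmetrize $M(q):=dD+qQ+\operatorname{diag}(r_i)$ by a diagonal similarity and then read off the derivative of $\lambda_1$ from the Rayleigh--Ritz representation. Set $\alpha:=\sqrt{(d+q)/d}$ and $P:=\operatorname{diag}(\alpha,\alpha^2,\dots,\alpha^n)$. Since $(P^{-1}MP)_{ij}=\alpha^{j-i}M_{ij}$, the diagonal of $M(q)$ is preserved; moreover, for every adjacent pair $(i,i+1)$ one has $M_{i,i+1}=d$ and $M_{i+1,i}=d+q$ (independently of whether $i$ is a boundary index), so $(P^{-1}MP)_{i,i+1}=d\alpha=(d+q)/\alpha=(P^{-1}MP)_{i+1,i}=\sqrt{d(d+q)}$. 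Thus $\tilde M(q):=P^{-1}M(q)P$ is symmetric tridiagonal with constant off-diagonal $\sqrt{d(d+q)}$, and $\lambda_1(d,q,\bm r)=s(\tilde M(q))$. This works simultaneously for \textbf{H}1 and \textbf{H}1$^*$ because the two cases only differ in the diagonal entry $D_{nn}$.

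With $\tilde M(q)$ symmetric, Rayleigh--Ritz gives
$$
\lambda_1(d,q,\bm r)=\max_{\|\psi\|=1}R(\psi;q),\qquad R(\psi;q)=2\sqrt{d(d+q)}\sum_{i=1}^{n-1}\psi_i\psi_{i+1}+\sum_{i=1}^n(dD_{ii}+qQ_{ii}+r_i)\psi_i^2.
$$
Let $\psi^*=\psi^*(q)$ be the unit positive Perron eigenvector. Since $\lambda_1$ is a simple eigenvalue, the envelope theorem (equivalently, the standard first-order perturbation formula) yields
$$
\frac{d\lambda_1}{dq}=\left.\frac{\partial R}{\partial q}\right|_{\psi=\psi^*}=\sqrt{\frac{d}{d+q}}\sum_{i=1}^{n-1}\psi_i^*\psi_{i+1}^*-1,
$$
where the $-1$ comes from $\sum_i Q_{ii}(\psi_i^*)^2=-\|\psi^*\|^2=-1$, valid in both cases since $Q_{ii}=-1$ for every $i$. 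Strict monotonicity now follows by Cauchy--Schwarz: because $\psi^*\gg 0$,
$$
\sum_{i=1}^{n-1}\psi_i^*\psi_{i+1}^*\le\sqrt{1-(\psi_n^*)^2}\,\sqrt{1-(\psi_1^*)^2}<1,
$$
while $\sqrt{d/(d+q)}\le 1$, so $d\lambda_1/dq<0$ on $[0,\infty)$.

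For the limits, $\lim_{q\to 0^+}\lambda_1(d,q,\bm r)=\lambda_1(d,0,\bm r)$ is immediate from continuity of a simple eigenvalue in the matrix entries. For $q\to\infty$, substituting any unit $\psi$ into $R$ and using $\sum\psi_i\psi_{i+1}\le 1$ together with the explicit diagonals gives an estimate of the form $R(\psi;q)\le 2\sqrt{d(d+q)}-q+C(d,\bm r)$ with $C$ independent of $q$; the right-hand side tends to $-\infty$, and the maximum over $\psi$ preserves this. The main obstacle I anticipate is the bookkeeping at the boundary indices $i=1,n$, where the diagonal entries of $D$ differ from the interior ones and $D_{nn}$ itself changes between the two hypotheses; one has to confirm that the symmetrization still produces a uniform off-diagonal $\sqrt{d(d+q)}$ (it does, because the off-diagonal entries of $D$ and $Q$ are uniform in $i$) and that the $q$-derivative of the diagonal contributes exactly $-1$ in both cases (it does, because $Q_{ii}=-1$ for every $i$); everything else is routine.
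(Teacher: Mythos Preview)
Your proof is correct and takes a genuinely different route from the paper. The paper differentiates the eigenvalue equation directly, forms the standard cross-multiplied difference, and then introduces the weights $\beta_i=(d/(d+q))^{i-1}$ so that the mixed derivative terms cancel; the resulting identity $\frac{\partial\lambda_1}{\partial q}\sum_i\beta_i\phi_i^2=\sum_{i,j}\beta_iQ_{ij}\phi_i\phi_j$ is then shown to be negative by a term-by-term completion of the square. Your diagonal similarity with $P_{ii}=\alpha^i$ is precisely the transformation that encodes those same weights (indeed $\beta_i\propto P_{ii}^{-2}$), so the two computations are equivalent at the level of identities; what differs is the packaging. Your version is more conceptual: once $\tilde M(q)$ is symmetric, simplicity of $\lambda_1$ together with the Hellmann--Feynman/envelope formula produces the derivative in one line, and Cauchy--Schwarz (exploiting $\psi_1^*,\psi_n^*>0$) replaces the paper's completing-the-square. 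This also makes the $q\to\infty$ limit essentially free from the uniform Rayleigh-quotient bound, whereas the paper argues that limit by contradiction using compactness of the normalized eigenvector. On the other hand, the paper's bare-hands weighting technique is reused verbatim later (for $[q^*_{\bm r}(d)]'$ in Lemma~\ref{qstr2} and the formula \eqref{deriv3}) in settings where one is differentiating in $d$ rather than $q$ and no single symmetrizing similarity is as convenient; so their method buys reusability across the paper.
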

\begin{proof}
Let $\bm \phi=(\phi_1,\phi_2,\dots,\phi_n)^T\gg \bm 0$ be the  eigenvector corresponding to the principal eigenvalue $\la_1(d,q,\bm r)$ with
\begin{equation}\label{normalphi}
\sum_{i=1}^n \phi_i=1, \;\;q\in[0,\infty).
\end{equation}
Differentiating \eqref{eigen} with respect to $q$ yields
\begin{equation}\label{sp2}
\frac{\partial\lambda_1}{\partial q}\phi_i + \lambda_1 \frac{\partial\phi_i}{\partial q}= \sum_{j=1}^n\left(dD_{ij}+qQ_{ij}\right) \frac{\partial\phi_j}{\partial q} +\sum_{j=1}^n Q_{ij}\phi_j+r_i\frac{\partial\phi_i}{\partial q}.
\end{equation}
Then multiplying \eqref{sp2} by $\phi_i$ and \eqref{eigen} by $\frac{\partial\phi_i}{\partial q}$ and taking the difference, we have
\begin{equation}\label{sp3}
\frac{\partial\lambda_1}{\partial q}\phi_i^2=\sum_{j\ne i}\left(dD_{ij}+qQ_{ij}\right)\left(\frac{\partial\phi_j}{\partial q}\phi_i-\frac{\partial\phi_i}{\partial q}\phi_j\right)+\sum_{j=1}^nQ_{ij}\phi_i\phi_j.\\
\end{equation}
Let
 \begin{equation*}\label{alpha}
(\beta_1, \beta_2, \beta_3,\dots, \beta_n)=\left(1,\ds\frac{d}{d+q},\left(\ds\frac{d}{d+q}\right)^2,\dots, \left(\ds\frac{d}{d+q}\right)^{n-1}\right).
\end{equation*}
Multiplying \eqref{sp3} by ${\beta}_i$ and summing them over $i$, we obtain
\begin{equation}\label{sp5}
\frac{\partial\lambda_1}{\partial q} \sum_{i=1}^n {\beta}_i \phi_i^2 = \sum_{i=1}^n \sum_{j\ne i} \beta_i\left(dD_{ij}+qQ_{ij}\right)\left(\frac{\partial\phi_j}{\partial q}\phi_i-\frac{\partial\phi_i}{\partial q}\phi_j\right)
+\sum_{i=1}^n\sum_{j=1}^n\beta_iQ_{ij}\phi_i\phi_j.
\end{equation}
A direct computation yields
\begin{equation}\label{sp6}
\begin{split}
&\sum_{i=1}^n \sum_{j\ne i} \beta_i\left(dD_{ij}+qQ_{ij}\right)\left(\frac{\partial\phi_j}{\partial q}\phi_i-\frac{\partial\phi_i}{\partial q}\phi_j\right)\\
=&\sum_{i=1}^{n-1}\left[\beta_id\left(\frac{\partial\phi_{i+1}}{\partial q}\phi_i-\frac{\partial\phi_i}{\partial q}\phi_{i+1}\right)+\beta_{i+1}(d+q)\left(\frac{\partial\phi_i}{\partial q}\phi_{i+1}-\frac{\partial\phi_{i+1}}{\partial q}\phi_i\right)\right]\\
=&\sum_{i=1}^{n-1}\left[(\beta_id -\beta_{i+1}(d+q))\left(\frac{\partial\phi_{i+1}}{\partial q}\phi_i-\frac{\partial\phi_i}{\partial q}\phi_{i+1}\right)\right]=0,
\end{split}
\end{equation}
where we have used $\beta_id -\beta_{i+1}(d+q)=0$ for all $i=1, 2,\dots,n-1$.
This, combined with \eqref{sp5}, implies that
\begin{equation}\label{sp7}
\begin{split}
&\frac{\partial\lambda_1}{\partial q} \sum_{i=1}^n {\beta}_i \phi_i^2=\sum_{i=1}^n\sum_{j=1}^n\beta_iQ_{ij}\phi_i\phi_j\\=&-\sum_{i=1}^n\beta_i\phi_i^2
+\sum_{i=1}^{n-1}\beta_{i+1}\phi_i\phi_{i+1}\\
=& -\frac{\beta_1}{2}\phi^2_1-\frac{\beta_n}{2}\phi^2_n-\sum_{i=1}^{n-1}\left(\frac{\beta_i}{2}\phi_i^2-\beta_{i+1}\phi_i\phi_{i+1}+\frac{\beta_{i+1}}{2}\phi_{i+1}^2\right)<0,
\end{split}
\end{equation}
where we have used the fact that
$$
\frac{\beta_i}{2}\phi_i^2-\beta_{i+1}\phi_i\phi_{i+1}+\frac{\beta_{i+1}}{2}\phi_{i+1}^2\ge\frac{\beta_{i+1}}{2}(\phi_i-\phi_{i+1})^2  \ge0
$$
as $\beta_{i}\ge\beta_{i+1}$ for $i=1,\dots,n-1$.
This implies that $\la_1(d,q,\bm r)$ is strictly decreasing with respect to $q$ in $[0,\infty)$.

Clearly, $\ds\lim_{q\to0}\la_1(d,q,\bm r)=\la_1(d,0,\bm r)$. It remains to show $\ds\lim_{q\rightarrow\infty}\la_1(d,q,\bm r)=-\infty$.
Since $\la_1(d,q,\bm r)$ is decreasing in  $q$,  the limit $\ds\lim_{q\to\infty}\la_1(d,q,\bm r)$ exists in $[-\infty, \infty)$. Suppose to the contrary that
$\ds\lim_{q\to\infty}\la_1(d,q,\bm r)\in(-\infty,\infty)$. By \eqref{normalphi}, up to a subsequence, we have
 $\ds\lim_{q\to\infty}\bm \phi=\bm \phi^*$, where $\bm \phi^*=(\phi^*_1,\dots,\phi^*_n)^T\ge\bm 0$ and $\sum_{i=1}^n\phi_i^*=1$.
Dividing both sides of \eqref{eigen} by $q$ and taking $q\to\infty$, we have
$$
\sum_{j=1}^n Q_{ij} \phi^*_i=\bm 0, \ i=1,\dots, n.
$$
This implies that $\bm \phi^*=\bm 0$, which is a contradiction. Therefore, we have $\ds\lim_{q\to \infty}\la_1(d,q,\bm r)=-\infty$.
\end{proof}

From  Lemmas \ref{DS-single}-\ref{theorem_quasi} and \ref{prop-p}, we obtain the following results about the impact of $d$ and $q$ on the dynamics of model \eqref{pat-s} for case (a).
\begin{proposition}\label{effp}
Suppose that \textbf{H}1 holds. Then the following statements hold:
\begin{enumerate}
\item [{\rm (i)}] If $\ds\sum_{i=1}^n r_i> 0$, then for any $d>0$ there exists $q_{\bm r}^*(d)>0$ such that
$\la_1(d,q_{\bm r}^*(d), \bm r)=0$, $\la_1(d,q, \bm r)<0$ for $q>q_{\bm r}^*(d)$, and $\la_1(d,q, \bm r)>0$ for $q<q_{\bm r}^*(d)$;
Moreover, we have the following results:
\begin{enumerate}
\item [{$\rm (i_1)$}] If $q\ge q_{\bm r}^*(d)$, then the trivial equilibrium $\mathbf{0}$ of model \eqref{pat-s} is globally asymptotically stable;
\item [{$\rm (i_2)$}] If $q<q_{\bm r}^*(d)$, model \eqref{pat-s} admits a unique positive equilibrium, which is globally asymptotically stable;
\end{enumerate}
\item [{\rm (ii)}] If $\ds\sum_{i=1}^n r_i<0<\max_{1\le i\le n} r_i$, then there exists  $d^*>0$ such that $\la_1(d^*,0,\bm r)=0$,  $\la_1(d,0,\bm r)<0$ for $d>d^*$, and $\la_1(d,0,\bm r)>0$ for $d<d^*$; Moreover, we have the following results:
     \begin{enumerate}
\item [{$\rm (ii_1)$}] If $d\in(0,d^*)$, then there exists $q_{\bm r}^*(d)>0$ such that $(i_1)$-$(i_2)$ hold;
\item [{$\rm (ii_2)$}] If $d\ge d^*$, then for any $q>0$, the trivial equilibrium $\mathbf{0}$ of model \eqref{pat-s} is globally asymptotically stable;
\end{enumerate}
\item [{\rm (iii)}] If $\ds\max_{1\le i\le n} r_i\le 0$, then the trivial equilibrium $\mathbf{0}$ of model \eqref{pat-s} is globally asymptotically stable for any $d>0$ and $q\ge 0$.
\end{enumerate}
\end{proposition}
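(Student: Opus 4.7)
The plan is to assemble the proposition from the three earlier results already at our disposal: the dynamical dichotomy of Lemma \ref{DS-single}, the $d$-monotonicity of $\lambda_1(d,0,\bm r)$ supplied by Lemma \ref{theorem_quasi}, and the strict $q$-monotonicity supplied by Lemma \ref{prop-p}. Under \textbf{H}1 both $D$ and $Q$ have every column sum equal to zero, so $s(D)=0$ and Lemma \ref{theorem_quasi}(ii) applies with $\bm\theta=(1/n,\dots,1/n)$; consequently
$$
\lim_{d\to 0}\lambda_1(d,0,\bm r)=\max_{1\le i\le n} r_i,\qquad \lim_{d\to\infty}\lambda_1(d,0,\bm r)=\frac{1}{n}\sum_{i=1}^n r_i.
$$
Continuity of $\lambda_1$ in $(d,q)$ follows from the simplicity of the Perron eigenvalue of an irreducible essentially nonnegative matrix.

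For (iii), I would dispatch the case first. Summing the eigenvalue equation \eqref{eigen} over $i$ and using the vanishing column sums of $dD+qQ$, I get $\lambda_1(d,q,\bm r)\sum_{i}\phi_i=\sum_{i} r_i\phi_i\le(\max_i r_i)\sum_i\phi_i$, hence $\lambda_1(d,q,\bm r)\le\max_i r_i\le 0$, and Lemma \ref{DS-single} yields global asymptotic stability of the trivial equilibrium.

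For (i), fix $d>0$. Combining the non-increasing $d$-monotonicity of $\lambda_1(d,0,\bm r)$ with its limit $\frac{1}{n}\sum_i r_i>0$ as $d\to\infty$ gives $\lambda_1(d,0,\bm r)\ge\frac{1}{n}\sum_i r_i>0$. Lemma \ref{prop-p} then provides a strictly decreasing continuous map $q\mapsto\lambda_1(d,q,\bm r)$ starting positive at $q=0$ and tending to $-\infty$, so the intermediate value theorem produces a unique root $q^*_{\bm r}(d)>0$ with the stated sign behavior, and $(i_1)$, $(i_2)$ are immediate from Lemma \ref{DS-single}.

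For (ii), the hypothesis $\max_i r_i>0>\sum_i r_i$ prevents all the $r_i$ from coinciding, so Lemma \ref{theorem_quasi}(ii) yields \emph{strict} monotonicity of $\lambda_1(d,0,\bm r)$ in $d$ from $\max_i r_i>0$ down to $\frac{1}{n}\sum_i r_i<0$; the intermediate value theorem furnishes the unique threshold $d^*>0$ with the stated sign properties. For $d\in(0,d^*)$ the argument of (i) applies verbatim to produce $q^*_{\bm r}(d)$, while for $d\ge d^*$ Lemma \ref{prop-p} forces $\lambda_1(d,q,\bm r)<\lambda_1(d,0,\bm r)\le 0$ for every $q>0$, and Lemma \ref{DS-single} closes the argument. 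Honestly, no serious obstacle is present here; the only subtlety worth flagging is that in (i) one cannot read off $\lambda_1(d,0,\bm r)>0$ from the $d\to 0$ limit alone, and must instead combine monotonicity with the limit at $d\to\infty$ to secure positivity for \emph{all} $d>0$.
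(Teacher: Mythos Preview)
Your overall strategy is correct and mirrors the paper's: assemble everything from Lemma~\ref{DS-single}, Lemma~\ref{theorem_quasi}(ii) applied to $D$, and Lemma~\ref{prop-p}. Parts (i) and (ii) are handled exactly as in the paper.

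There is, however, a factual slip in your treatment of (iii). Under \textbf{H}1 the matrix $Q$ does \emph{not} have every column sum equal to zero: column $n$ sums to $-1$ (there is loss at the downstream end in the stream-to-lake case). Consequently, summing \eqref{eigen} over $i$ does not give $\lambda_1\sum_i\phi_i=\sum_i r_i\phi_i$ but rather
\[
\lambda_1(d,q,\bm r)\sum_{i=1}^n\phi_i \;=\; -q\phi_n+\sum_{i=1}^n r_i\phi_i
\]
(this is precisely equation \eqref{bound} in the paper). Fortunately the extra term $-q\phi_n$ is nonpositive, so your inequality $\lambda_1\le\max_i r_i$ survives and (iii) still goes through. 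You should either correct the computation or, more simply, argue (iii) the way the paper implicitly does: from $\lim_{d\to 0}\lambda_1(d,0,\bm r)=\max_i r_i\le 0$ and the non-increase in $d$ one gets $\lambda_1(d,0,\bm r)\le 0$ for all $d>0$, and then Lemma~\ref{prop-p} forces $\lambda_1(d,q,\bm r)\le 0$ for all $q\ge 0$.
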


\begin{proof}
Note that $D$ is an  irreducible essentially nonnegative matrix with $s(D)=0$ corresponding with a positive eigenvector $(1/n,\dots, 1/n)$. It follows from Lemma \ref{theorem_quasi} that
$$\frac{\partial \la_1(d,0,\bm r)}{\partial d}\le0,
$$
where the equality holds if and only $r_1=\dots=r_n$. Moreover, $$
\lim_{d\rightarrow 0}\la_1(d,0,\bm r)=\max_{1\le i\le n} r_i \;\;
\text{and} \;\; \lim_{d \rightarrow\infty}\la_1(d,0,\bm r)=\ds\f{\sum_{i=1}^n{r_i}}{n}.
$$
This, combined with Lemmas \ref{DS-single} and \ref{prop-p}, implies  $(i)$-$(iii)$.
\end{proof}

A similar result holds for case (b).
\begin{proposition}\label{qb}
Suppose that \textbf{H}1$^{*}$ holds. Then we the following results:
\begin{enumerate}
\item [{\rm (i)}] If $\ds\max_{1\le i\le n} r_i>0$, then there exists  $d^*>0$ such that $\la_1(d^*,0,\bm r)=0$,  $\la_1(d,0,\bm r)<0$ for $d>d^*$, and $\la_1(d,0,\bm r)>0$ for $d<d^*$; Moreover, we have:
     \begin{enumerate}
\item [{$\rm (i_1)$}] If $d\in(0,d^*)$, then there exists $q_{\bm r}^*(d)>0$ such that $(i_1)$-$(i_2)$ in Proposition \ref{effp} hold;
\item [{$\rm (i_2)$}] If $d\ge d^*$, then for any $q>0$, the trivial equilibrium $\mathbf{0}$ of model \eqref{pat-s} is globally asymptotically stable;
\end{enumerate}
\item [{\rm (ii)}] If $\ds\max_{1\le i\le n} r_i\le 0$, then the trivial equilibrium $\mathbf{0}$ of model \eqref{pat-s} is globally asymptotically stable  for any $d>0$ and $q\ge 0$.
\end{enumerate}
\end{proposition}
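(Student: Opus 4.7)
The plan is to mirror the proof of Proposition \ref{effp}, the only structural change being that under \textbf{H}1$^{*}$ the matrix $D$ has $s(D)<0$ (because the modification $D_{nn}=-2$ breaks the zero column sum at patch $n$, so the column sums are nonpositive with a strict inequality somewhere, hence $D$ is not conservative). Consequently, instead of invoking part (ii) of Lemma \ref{theorem_quasi} as in the proof of Proposition \ref{effp}, we are now in the regime of part (i): the map $d\mapsto\lambda_1(d,0,\bm r)=s(dD+\mathrm{diag}(r_i))$ is \emph{strictly} decreasing on $(0,\infty)$, with
\[
\lim_{d\to 0}\lambda_1(d,0,\bm r)=\max_{1\le i\le n}r_i,\qquad \lim_{d\to\infty}\lambda_1(d,0,\bm r)=-\infty.
\]

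For part (i), suppose $\max_i r_i>0$. Continuity and strict monotonicity of $\lambda_1(\cdot,0,\bm r)$ together with the two limits above guarantee, by the intermediate value theorem, a unique $d^*>0$ with $\lambda_1(d^*,0,\bm r)=0$; strict monotonicity also yields $\lambda_1(d,0,\bm r)>0$ for $d<d^*$ and $\lambda_1(d,0,\bm r)<0$ for $d>d^*$. For $d\in(0,d^*)$, I would then apply Lemma \ref{prop-p}: since $q\mapsto\lambda_1(d,q,\bm r)$ is strictly decreasing with $\lim_{q\to 0}\lambda_1(d,q,\bm r)=\lambda_1(d,0,\bm r)>0$ and $\lim_{q\to\infty}\lambda_1(d,q,\bm r)=-\infty$, there is a unique $q_{\bm r}^*(d)>0$ at which $\lambda_1$ crosses zero, and the dichotomy $(i_1)$--$(i_2)$ of Proposition \ref{effp} follows directly from Lemma \ref{DS-single}. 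For $d\ge d^*$, the inequality $\lambda_1(d,0,\bm r)\le 0$ combined with the strict decrease in $q$ from Lemma \ref{prop-p} gives $\lambda_1(d,q,\bm r)<\lambda_1(d,0,\bm r)\le 0$ for every $q>0$, so Lemma \ref{DS-single} yields global stability of the trivial equilibrium.

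For part (ii), assume $\max_i r_i\le 0$. Using again the strict monotonicity from Lemma \ref{theorem_quasi}(i), for any $d>0$ we have
\[
\lambda_1(d,0,\bm r)<\lim_{d'\to 0}\lambda_1(d',0,\bm r)=\max_{1\le i\le n}r_i\le 0,
\]
and Lemma \ref{prop-p} then extends this to $\lambda_1(d,q,\bm r)\le\lambda_1(d,0,\bm r)<0$ for all $q\ge 0$. A final appeal to Lemma \ref{DS-single} settles global asymptotic stability of $\bm 0$.

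There is essentially no technical obstacle here: every ingredient (the sign of $s(D)$, the strict monotonicity in $d$, the monotonicity and limits in $q$, and the single-species threshold theorem) is already available. The only point that requires a brief justification is that $s(D)<0$ under \textbf{H}1$^{*}$, which I would record at the start of the proof by noting that $D$ under \textbf{H}1$^{*}$ is irreducible, essentially nonnegative, has nonpositive column sums with strict inequality at patch $n$, and thus $-D$ is a nonsingular $M$-matrix, equivalently $s(D)<0$; this is what distinguishes the case~(b) picture (a single critical diffusion threshold $d^*$ exists whenever $\max_i r_i>0$) from the more delicate case~(a) analysis of Proposition \ref{effp}.
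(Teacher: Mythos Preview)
Your proposal is correct and follows essentially the same approach as the paper: invoke $s(D)<0$ under \textbf{H}1$^{*}$ (which the paper records in the Preliminaries), apply Lemma~\ref{theorem_quasi}(i) to obtain strict monotonicity of $\lambda_1(d,0,\bm r)$ in $d$ together with the two limits, and then combine with Lemmas~\ref{DS-single} and~\ref{prop-p}. The paper's proof is simply a terser version of what you wrote.
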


\begin{proof}
Since $s(D)<0$, by Lemma \ref{theorem_quasi}, we have
$$\frac{\partial \la_1(d,0,\bm r)}{\partial d}<0,
$$
and
 $$
\lim_{d\rightarrow 0}\la_1(d,0,\bm r)=\max_{1\le i\le n} r_i \;\;
\text{and} \;\; \lim_{d \rightarrow\infty}\la_1(d,0,\bm r)=-\infty.
$$
This, combined with Lemmas \ref{DS-single} and \ref{prop-p}, implies  {$\rm (i)$}-{$\rm (ii)$}.
\end{proof}

\subsection{Dependence of $\la_1(d,q,\bm r)$ on $d$}
In this section, we study the dependence of $\la_1(d,q,\bm r)$ on $d$  for  cases (a)-(b). When the directed movement rate $q=0$, we know that $\lambda_1(d, 0, \bm r)$ is decreasing in $d\in (0, \infty)$. However, this may no longer be true when $q>0$.

We first compute the limits of $\lambda_1$ as $d\rightarrow 0$ or $\infty$ in case (a).
\begin{lemma}\label{consr}
Suppose that  \textbf{H}1 holds. Let $\la_1(d,q,\bm r)$ be the principal eigenvalue of \eqref{eigen}. Then, we have the following:
 $$
 \lim_{d\to0}\la_1(d,q,\bm r)=\ds\max_{1\le i\le n}r_i-q, \;\;\text{and}\;\;\lim_{d\to\infty}\la_1(d,q,\bm r)=\ds\frac{\sum_{i=1}^nr_i-q}{n}.
 $$
\end{lemma}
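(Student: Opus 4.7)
The plan is to handle the two limits separately, using different tools: continuity of the spectral bound for $d\to 0$, and an integration-by-parts style summation trick together with a compactness argument for $d\to\infty$.

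For $\lim_{d\to 0}$, I would invoke that the spectral bound $s(\cdot)$ of a square matrix is continuous in its entries. Therefore $\lambda_1(d,q,\bm r) = s(dD+qQ+\mathrm{diag}(r_i))$ converges to $s(qQ+\mathrm{diag}(r_i))$. Because of the definition \eqref{A}, the matrix $qQ+\mathrm{diag}(r_i)$ is lower triangular with diagonal entries $r_i-q$, so its eigenvalues are exactly $\{r_i-q:1\le i\le n\}$ and the spectral bound equals $\max_i r_i - q$. (Note that we cannot quote Perron--Frobenius at $d=0$ since the limit matrix is reducible, which is precisely why we argue via continuity of $s$ rather than via a principal eigenvector.)

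For $\lim_{d\to\infty}$, the key identity I would establish first is
\begin{equation*}
\lambda_1(d,q,\bm r) \;=\; \sum_{i=1}^n r_i \phi_i \;-\; q\,\phi_n,
\end{equation*}
where $\bm\phi\gg\bm 0$ is the principal eigenvector normalized by $\sum_i\phi_i=1$. This drops out by summing the eigenvalue equation \eqref{eigen} over $i$ and using that under \textbf{H}1 each column of $D$ sums to zero, while each column of $Q$ sums to zero except the last which sums to $-1$. With this identity in hand, the problem reduces to proving $\phi_i\to 1/n$ for every $i$ as $d\to\infty$.

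For that last step I would use compactness: the normalization bounds $\bm\phi$, so along any subsequence $d_k\to\infty$ there is a limit $\bm\phi^*\ge\bm 0$ with $\sum_i\phi_i^*=1$. Writing the eigenvalue equation as
\begin{equation*}
\frac{\lambda_1}{d}\phi_i \;=\; \sum_{j=1}^n D_{ij}\phi_j + \frac{q}{d}\sum_{j=1}^n Q_{ij}\phi_j + \frac{r_i}{d}\phi_i,
\end{equation*}
the factor $\lambda_1/d = s\bigl(D+(q/d)Q+(1/d)\mathrm{diag}(r_i)\bigr)$ tends to $s(D)=0$ by continuity of $s$. Passing to the limit yields $D\bm\phi^*=\bm 0$. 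Since $D$ is irreducible essentially nonnegative with $s(D)=0$ and $D\bm 1=\bm 0$, its kernel is spanned by $\bm 1$, so the normalization forces $\bm\phi^*=(1/n,\dots,1/n)$. Because every subsequential limit is the same, $\bm\phi\to\bm 1/n$, and substituting into the identity above gives $\lim_{d\to\infty}\lambda_1 = (\sum_i r_i - q)/n$. The only mildly delicate point is justifying $\lambda_1/d\to 0$, but once one recognizes $\lambda_1/d$ as the spectral bound of a matrix converging to $D$, it follows from continuity of $s(\cdot)$; no extra work is needed.
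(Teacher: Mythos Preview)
Your proof is correct and follows essentially the same path as the paper: the $d\to 0$ limit via continuity of the spectral bound applied to the lower-triangular limit matrix, and the $d\to\infty$ limit via the summation identity $\lambda_1=\sum_i r_i\phi_i - q\phi_n$, compactness of the normalized eigenvector, the conclusion $D\bm\phi^*=\bm 0$, and substitution back into the identity. The one minor variation is that you obtain $\lambda_1/d\to 0$ by the scaling observation $\lambda_1/d = s\bigl(D+(q/d)Q+(1/d)\,\mathrm{diag}(r_i)\bigr)\to s(D)=0$, whereas the paper first extracts from the summation identity the a priori bound $\min_i r_i - q \le \lambda_1 \le \max_i r_i$ and then uses boundedness of $\lambda_1$; both are valid, and your version is arguably a touch cleaner.
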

\begin{proof}
Firstly, it is easy to see that $\ds\lim_{d\to0}\la_1(d,q,\bm r)=\la_1(0, q, \bm r)=\ds\max_{1\le i\le n}r_i-q$. Then we compute the limit of  $ \la_1(d,q,\bm r)$ as $q\rightarrow \infty$. Let $\bm \phi=(\phi_1,\phi_2,\ldots,\phi_n)^T\gg \bm 0$ be the eigenvector corresponding to the principal eigenvalue $\la_1(d,q,\bm r)$ with $\sum_{i=1}^n\phi_i=1$. Summing all the equations in \eqref{eigen}, we have
\begin{equation}\label{sum}
\sum_{i=1}^n\sum_{j=1}^n (d D_{ij}+qQ_{ij})\phi_j+\sum_{i=1}^n r_i\phi_i=\la_1(d,q,\bm r) \sum_{i=1}^n \phi_i.
\end{equation}
It follows from \textbf{H}1 that
$$
\sum_{i=1}^n\sum_{j=1}^n D_{ij} \phi_j=0 \ \ \text{and} \ \  \sum_{i=1}^n\sum_{j=1}^n Q_{ij} \phi_j=-\phi_n.
$$
Therefore, by \eqref{sum}, we have
\begin{equation}\label{bound}
-q\phi_n+\sum_{i=1}^n r_i\phi_i=\la_1(d,q,\bm r)\sum_{i=1}^n \phi_i .
\end{equation}
This gives a bound for $\la_1(d,q,\bm r)$:
$$
\min_{1\le i\le n} r_i-q \le \la_1(d,q,\bm r)\le \max_{1\le i\le n} r_i,
$$
which  implies that
\begin{equation}\label{limscona}
\lim_{d\to\infty}\la_1(d,q,\bm r)\in(-\infty,\max_{1\le i\le n} r_i].
\end{equation}
Up to a subsequence, we may assume $\ds\lim_{d\rightarrow\infty} \la_1(d, q, \bm r)=a$ and  $\ds\lim_{d\to\infty}\bm\phi=\bm{\bar\phi}$, where $\bm{
\bar\phi}=(\bar \phi_1,\dots,\bar\phi_n)^T\ge\bm 0$ and $\sum_{i=1}^n\bar\phi_i=1$.
Dividing both sides of \eqref{eigen} by $d$ and taking $d\to\infty$, we have
$D\bm{\bar\phi}=\bm 0$, which implies that
\begin{equation}\label{conb}
\bm{\bar\phi}=(\bar\phi_1,\dots,\bar\phi_n)^T=\left(\frac{1}{n},\dots,\frac{1}{n}\right)^T.
\end{equation}
Taking $d\rightarrow\infty$ in \eqref{bound}, we have
$$
-q\bar\phi_n+\sum_{i=1}^n r_i\bar\phi_i=a \sum_{i=1}^n \bar\phi_i.
$$
This gives
 $$\lim_{d\to\infty}\la_1(d,q,\bm r)= a=\ds\frac{\sum_{i=1}^nr_i-q}{n}.$$
\end{proof}

For $\hat{\bm r}\gg\bm 0$, the principal eigenvalue $\la_1(d,q, \hat{\bm r})$ satisfies the following property for case $(a)$, which will be useful later.
\begin{lemma}\label{consr2}
Suppose that \textbf{H}1 holds. Let $\la_1(d,q, \hat{\bm r})$ be the principal eigenvalue of \eqref{eigen}  with $\hat{\bm r}\gg\bm 0$. If
$\la_1(d^*,q, \hat{\bm r})=0$ for some $d^*>0$, then
\begin{equation}\label{lage}
 \f{\partial}{\partial d} \la_1(d, q, \hat{\bm r})\big|_{d=d^*}>0.
\end{equation}
\end{lemma}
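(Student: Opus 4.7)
The plan is to compute $\partial \lambda_1/\partial d$ at $d = d^*$ via the classical perturbation formula for a simple eigenvalue and then to show positivity by exploiting monotonicities of the eigenvectors. Let $\bm\phi \gg \bm 0$ be a right principal eigenvector and $\bm\psi \gg \bm 0$ a left principal eigenvector of $M := d^* D + q Q + \text{diag}(\hat r_i)$ associated to the simple eigenvalue $0$; both are componentwise positive by Perron--Frobenius. Differentiating the eigenvalue equation in $d$ and pairing with $\bm\psi^T$ on the left yields
\begin{equation*}
\left.\frac{\partial \lambda_1}{\partial d}\right|_{d=d^*} = \frac{\bm\psi^T D\,\bm\phi}{\bm\psi^T \bm\phi},
\end{equation*}
and since $\bm\psi^T \bm\phi > 0$ it suffices to prove $\bm\psi^T D\,\bm\phi > 0$. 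Summation by parts applied to the tridiagonal Laplacian $D$ of \textbf{H}1 further gives
\begin{equation*}
\bm\psi^T D\,\bm\phi = -\sum_{i=1}^{n-1}(\psi_{i+1} - \psi_i)(\phi_{i+1} - \phi_i),
\end{equation*}
so the problem reduces to showing that $\bm\phi$ is strictly increasing and $\bm\psi$ is strictly decreasing in the patch index $i$.

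I would then derive these two monotonicities from three-term recursions obtained by reading off $M\bm\phi = \bm 0$ and $\bm\psi^T M = \bm 0$. For $\bm\psi$: the column-$1$ equation forces $\psi_2 < \psi_1$ (using $\psi_2 > 0$ and $\hat r_1 > 0$), and each interior column rearranges to
\begin{equation*}
(d^*+q)(\psi_i - \psi_{i+1}) = d^*(\psi_{i-1} - \psi_i) + \hat r_i \psi_i,
\end{equation*}
so the hypothesis $\hat{\bm r} \gg \bm 0$ drives a forward induction propagating the strict decrease from $i = 1$ up to $i = n$. For $\bm\phi$: the row-$n$ equation gives $(d^* + q)(\phi_n - \phi_{n-1}) = \hat r_n \phi_n > 0$, and each interior row rearranges to
\begin{equation*}
(d^*+q)(\phi_i - \phi_{i-1}) = d^*(\phi_{i+1} - \phi_i) + \hat r_i \phi_i,
\end{equation*}
so a backward induction propagates the strict increase from $i = n$ down to $i = 1$.

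The main obstacle is securing these two monotonicities, because they must propagate from opposite ends of the stream: the left eigenvector inherits its decrease from the upstream (reflecting) boundary while the right eigenvector inherits its increase from the downstream (loss) boundary, and both inductions are powered at every step by the strict positivity of $\hat{\bm r}$, which is exactly the hypothesis of the lemma. Once these monotonicities are in hand, every summand $(\psi_{i+1} - \psi_i)(\phi_{i+1} - \phi_i)$ is strictly negative, so $\bm\psi^T D\,\bm\phi > 0$ and \eqref{lage} follows.
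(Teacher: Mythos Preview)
Your proof is correct. It is essentially equivalent to the paper's argument but packaged differently, and the repackaging is worth noting. The paper never introduces the left eigenvector; instead it multiplies the $i$-th equation by the explicit weight $\beta_i=(d/(d+q))^{i-1}$, which is precisely the factor that makes $\beta_i\phi_i$ a left eigenvector (since $\text{diag}(\beta_i)(dD+qQ)$ is symmetric). Thus your $\bm\psi^T D\bm\phi/\bm\psi^T\bm\phi$ and the paper's $\sum_i\sum_j\beta_iD_{ij}\phi_i\phi_j\big/\sum_i\beta_i\phi_i^2$ are literally the same quantity. For positivity, the paper proves $\phi_i<\phi_{i+1}$ exactly as you do, and then shows $d^*\phi_{k+1}-(d^*+q)\phi_k<0$ by summing the first $k$ rows of $M\bm\phi=0$; rewritten, this says $\beta_k\phi_k>\beta_{k+1}\phi_{k+1}$, i.e.\ $\psi_k>\psi_{k+1}$, which is your monotonicity of $\bm\psi$ obtained instead by forward induction on the columns of $M^T\bm\psi=0$. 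Your route has the advantage of being the standard perturbation--plus--summation-by-parts template, transparently reusable for other boundary cases; the paper's route avoids invoking the left eigenvector at the cost of carrying the weights $\beta_i$ explicitly. One cosmetic slip: in the column-$1$ step you wrote ``using $\psi_2>0$''; what you actually use is $\psi_1>0$.
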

\begin{proof}
Let $\bm \phi=(\phi_1,\phi_2,\dots,\phi_n)^T\gg \bm 0$ be the positive eigenvector corresponding to the  eigenvalue $\la_1(d,q, \hat{\bm r})$ with $\sum_{i=1}^n\phi_i=1$.
By similar arguments as in the proof of Lemma~\ref{prop-p},
we obtain
\begin{equation*}
\frac{\partial\lambda_1}{\partial d} \sum_{i=1}^n {\beta}_i \phi_i^2 = \sum_{i=1}^n\sum_{j=1}^n\beta_iD_{ij}\phi_i\phi_j,
\end{equation*}
where $\beta_i$ is defined in \eqref{alpha}.
A direct computation implies that
\begin{equation}\label{mp5b}
\frac{\partial\lambda_1}{\partial d} \ds\sum_{i=1}^n {\beta}_i \phi_i^2 = \ds\sum_{i=1}^{n-1}\beta_i\left(\phi_{i+1}-\phi_i\right)\left[\phi_{i}-\left(\ds\frac{d}{d+q}\right)\phi_{i+1}\right].
\end{equation}
If $\la_1(d^*,q, \hat{\bm r})=0$, then we see from \eqref{eigen} that
\begin{equation}\label{requiv0}
\begin{split}
&(d^*+q)(\phi_{n-1}-\phi_n)=-\hat r_n\phi_n,\\
&(d^*+q)(\phi_{i-1}-\phi_i)=-\hat r_i\phi_i+d^*(\phi_{i}-\phi_{i+1}),\;\;i=2,\dots,n-1.
\end{split}
\end{equation}
Since $\hat{\bm r}\gg 0$, we have
\begin{equation}\label{order}
\phi_{1}<\phi_2<\dots<\phi_n.
\end{equation}
 Summing up the first $k$ equations in \eqref{eigen}, where $1\le k\le n-1$, we find
$$
d^*\phi_{k+1}-(d^*+q)\phi_k=-\sum_{i=1}^k \hat r_i\phi_i<0.
$$
This, combined with \eqref{mp5b} and \eqref{order}, implies \eqref{lage}.
\end{proof}

We also have the limits of $\lambda_1$ as $d\rightarrow 0$ or $\infty$ in case (b).
\begin{lemma}\label{consrr}
Suppose that  \textbf{H}1$^*$ holds. Let $\la_1(d,q,\bm r)$ be the principal eigenvalue of \eqref{eigen}. Then, we have the following:
 $$
 \lim_{d\to0}\la_1(d,q,\bm r)=\ds\max_{1\le i\le n}r_i-q, \;\;\text{and}\;\;\lim_{d\to\infty}\la_1(d,q,\bm r)=-\infty.
 $$
\end{lemma}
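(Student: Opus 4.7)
My plan is to proceed in close analogy with the proof of Lemma \ref{consr} for case $(a)$, with the crucial new ingredient being that under \textbf{H}1$^*$ the spectral bound satisfies $s(D)<0$, so $0\notin\sigma(D)$ and hence $D$ is invertible. The first limit $\ds\lim_{d\to 0}\la_1(d,q,\bm r)=\max_{1\le i\le n}r_i-q$ is immediate from continuity of the spectral bound in the matrix entries: since $Q$ is unchanged between \textbf{H}1 and \textbf{H}1$^*$, at $d=0$ the matrix $qQ+\text{diag}(r_i)$ is lower triangular with diagonal entries $r_i-q$, whose spectral bound is precisely $\max_{1\le i\le n}r_i-q$.

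For the second limit, I would mimic the strategy of Lemma \ref{consr}. Let $\bm\phi=(\phi_1,\ldots,\phi_n)^T\gg\bm 0$ denote the normalized positive eigenvector with $\sum_{i=1}^n\phi_i=1$. Summing the $n$ equations of \eqref{eigen} and using that under \textbf{H}1$^*$ the column sums of $D$ vanish except for the last column, where $\sum_i D_{in}=-1$, while $\sum_i\sum_j Q_{ij}\phi_j=-\phi_n$ as in case $(a)$, I obtain the identity
\begin{equation*}
\la_1(d,q,\bm r)=-(d+q)\phi_n+\sum_{i=1}^n r_i\phi_i\le \max_{1\le i\le n}r_i,
\end{equation*}
so $\la_1(d,q,\bm r)$ is bounded above in $d$. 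Arguing by contradiction, suppose $\la_1(d,q,\bm r)\not\to-\infty$ as $d\to\infty$; then along some subsequence $\la_1(d,q,\bm r)\to a\in(-\infty,\max_i r_i]$ and $\bm\phi\to\bm{\bar\phi}\ge\bm 0$ with $\sum_i\bar\phi_i=1$. Dividing \eqref{eigen} by $d$ and sending $d\to\infty$ yields $D\bm{\bar\phi}=\bm 0$; but since $D$ is invertible, this forces $\bm{\bar\phi}=\bm 0$, a contradiction.

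The only substantive difference from case $(a)$ lies in this last step: in case $(a)$, $D\bm{\bar\phi}=\bm 0$ is compatible with $\bm{\bar\phi}=(1/n,\ldots,1/n)^T$ and yields the explicit finite limit $(\sum_i r_i-q)/n$, whereas here invertibility of $D$ immediately rules out any finite limit. I therefore do not anticipate a serious obstacle; the extraction of a convergent subsequence of $\bm\phi$ is automatic from the simplex normalization, and the identity for $\la_1$ in terms of $\phi_n$ provides the required upper bound for free.
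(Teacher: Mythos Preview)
Your proposal is correct and follows essentially the same route as the paper's proof: the summation identity $\la_1=-(d+q)\phi_n+\sum_i r_i\phi_i$ gives the uniform upper bound, and the contradiction argument via $D\bm{\bar\phi}=\bm 0$ together with $s(D)<0$ (equivalently, $D$ invertible) rules out any finite limit as $d\to\infty$. The paper phrases the final step as ``$\bm{\bar\phi}$ would be a nonnegative eigenvector for eigenvalue $0$, contradicting $s(D)<0$,'' which is logically the same as your invertibility argument.
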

\begin{proof}
The proof is similar to that of Lemma \ref{consr}, and the  difference is that \eqref{bound} is replaced by the following equation:
\begin{equation}\label{bound2}
-(d+q)\phi_n+\sum_{i=1}^n r_i\phi_i=\la_1(d,q,\bm r) \sum_{i=1}^n \phi_i.
\end{equation}
This gives a bound for $\la_1(d,q,\bm r)$:
$$
\min_{1\le i\le n} r_i-(d+q)\le \la_1(d,q,\bm r)\le \max_{1\le i\le n} r_i.
$$
Assume to the contrary that $\ds\lim_{d\rightarrow\infty} \la_1(d, q, \bm r) \neq-\infty$. Up to a subsequence, we may assume $\ds\lim_{d\rightarrow\infty} \la_1(d, q, \bm r)=a\in (-\infty, \max_{1\le i\le n} r_i]$ and  $\ds\lim_{d\to\infty}\bm\phi=\bm{ \bar\phi}$, where $\bm{ \bar\phi}=(\bar \phi_1,\dots,\bar\phi_n)\ge\bm 0$ and $\sum_{i=1}^n\bar\phi_i=1$.
Dividing both sides of \eqref{eigen} by $d$ and taking $d\to\infty$, we have
$D\bm{\bar\phi}=\bm 0$. So $\bm{\bar\phi}$ is a nonnegative eigenvector corresponding with eigenvalue 0 of $D$. This contradicts with $s(D)<0$. Therefore, $\ds \lim_{d\rightarrow\infty} \la_1(d, q, \bm r) =-\infty$.
\end{proof}

\subsection{Some properties on $q_{\bm r}^*(d)$}
In this subsection, we give some properties on function $q_{\bm r}^*(d)$ obtained in Propositions~\ref{effp} and \ref{qb}, which will be used in the next section.

We first consider case $(a)$.
\begin{lemma}\label{qstr}
Suppose that \textbf{H}1 holds and ${\bm r}\gg\bm 0$, and let $q^*_{\bm r}(d)$ be defined in Proposition \ref{effp}. Then the following statements about $q_{{\bm r}}^*(d)$ hold:
\begin{enumerate}
\item [{$\rm (i)$}]
$q_{{\bm r}}^*(d)$ is strictly increasing with respect to $d$ in $(0,\infty)$;
\item [{$\rm (ii)$}]
$\ds\lim_{d\to0}q_{{\bm r}}^*(d)=\max_{1\le i\le n} r_i$, and $\ds\lim_{d\to\infty}q_{{\bm r}}^*(d)=\ds\sum_{i=1}^n r_i$;

\item [{$\rm (iii)$}] If $\bm { r_1}> \bm { r_2}\gg \bm 0$, then $q^*_{ \bm { r_1}}(d)>q^*_{\bm { r_2}}(d)$ for any  $d>0$.
\end{enumerate}
\end{lemma}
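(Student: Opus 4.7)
The plan is to prove all three statements by exploiting the implicit characterization $\lambda_1(d,q^*_{\bm r}(d),\bm r)=0$ together with the monotonicity and limit results for $\lambda_1$ already established.

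For (i), I would use implicit differentiation. Since $\lambda_1$ is analytic in $(d,q)$ (being a simple root of the characteristic polynomial of an irreducible essentially nonnegative matrix), differentiating $\lambda_1(d,q^*_{\bm r}(d),\bm r)=0$ with respect to $d$ gives
\begin{equation*}
\frac{d}{dd}\,q^*_{\bm r}(d)= -\frac{\partial_d \lambda_1(d,q^*_{\bm r}(d),\bm r)}{\partial_q \lambda_1(d,q^*_{\bm r}(d),\bm r)}.
\end{equation*}
By Lemma \ref{prop-p} the denominator is strictly negative. Since $\bm r\gg\bm 0$ and $\lambda_1(d,q^*_{\bm r}(d),\bm r)=0$, Lemma \ref{consr2} applies and yields $\partial_d\lambda_1>0$ at such points. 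Hence $(q^*_{\bm r})'(d)>0$ for all $d>0$. This step is the principal leverage of the argument, and the sign of $\partial_d\lambda_1$ at $\lambda_1=0$ (Lemma \ref{consr2}) is precisely the input one would expect to be delicate; fortunately it is already at our disposal.

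For (ii), the idea is to combine the monotonicity from (i) with the asymptotic formulas in Lemma \ref{consr}. Because $q^*_{\bm r}(d)$ is positive and strictly increasing, the limits $L_0:=\lim_{d\to 0^+}q^*_{\bm r}(d)\in[0,\infty)$ and $L_\infty:=\lim_{d\to\infty}q^*_{\bm r}(d)\in(0,\infty]$ exist. To pin down $L_0$, suppose by contradiction that $L_0<\max_i r_i$, and pick $q$ strictly between them. Then $q>q^*_{\bm r}(d)$ for all small $d$, so by Lemma \ref{prop-p} $\lambda_1(d,q,\bm r)<0$; but Lemma \ref{consr} gives $\lambda_1(d,q,\bm r)\to \max_i r_i-q>0$ as $d\to 0$, a contradiction. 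The reverse inequality $L_0>\max_i r_i$ is excluded by an analogous argument with $q\in(\max_i r_i,L_0)$. Hence $L_0=\max_i r_i$. The limit $L_\infty=\sum_i r_i$ is obtained by the same squeeze using the second limit in Lemma \ref{consr}.

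For (iii), the key observation is strict monotonicity of $\lambda_1$ in $\bm r$. If $\bm r_1>\bm r_2\gg\bm 0$, then $A_1:=dD+qQ+\mathrm{diag}(\bm r_1)$ and $A_2:=dD+qQ+\mathrm{diag}(\bm r_2)$ satisfy $A_1-A_2=\mathrm{diag}(\bm r_1-\bm r_2)\ge\bm 0$ and $A_1\ne A_2$. Letting $\psi_1\gg\bm 0$ be the left Perron eigenvector of $A_1$ and $\phi_2\gg\bm 0$ the right Perron eigenvector of $A_2$,
\begin{equation*}
\lambda_1(d,q,\bm r_1)\,\psi_1^T\phi_2 = \psi_1^T A_1\phi_2 \ge \psi_1^T A_2\phi_2 + \psi_1^T(A_1-A_2)\phi_2 > \lambda_1(d,q,\bm r_2)\,\psi_1^T\phi_2,
\end{equation*}
since $\phi_2,\psi_1\gg\bm 0$ force $\psi_1^T(A_1-A_2)\phi_2>0$. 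Evaluating at $q=q^*_{\bm r_2}(d)$ gives $\lambda_1(d,q^*_{\bm r_2}(d),\bm r_1)>0$, and since $\lambda_1(d,\cdot,\bm r_1)$ is strictly decreasing in $q$ with its unique zero at $q^*_{\bm r_1}(d)$, we conclude $q^*_{\bm r_1}(d)>q^*_{\bm r_2}(d)$.

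The main obstacle in this proof is already absorbed into Lemma \ref{consr2}: were it not known, showing $(q^*_{\bm r})'>0$ would require a direct eigenvector/summation estimate at the critical level $\lambda_1=0$, and this is precisely where the advective structure (the weights $\beta_i=(d/(d+q))^{i-1}$ and the ordering $\phi_1<\cdots<\phi_n$ forced by $\bm r\gg 0$) enters. Everything else reduces to standard Perron--Frobenius monotonicity arguments and continuity of the principal eigenvalue in the parameters.
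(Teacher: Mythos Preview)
Your proposal is correct and follows essentially the same route as the paper: both proofs of (i) hinge on Lemma~\ref{consr2} (you via implicit differentiation, the paper via a direct comparison of $\lambda_1$ at two diffusion values), both proofs of (ii) exploit the limits in Lemma~\ref{consr} through a squeeze, and (iii) is the same Perron--Frobenius monotonicity argument in both. The only noteworthy cosmetic difference is that for $\lim_{d\to\infty}q^*_{\bm r}(d)$ the paper argues directly with the eigenvector equation (first excluding $a_2=\infty$, then identifying the limit via $D\bm\phi^*=0$), whereas your sandwich using $\lambda_1(d,q,\bm r)\to(\sum_i r_i-q)/n$ handles both the finiteness and the value in one stroke.
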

\begin{proof}
(i) Let $d_1>d_2>0$. Then, by the definition of $q^*_{{\bm r}}(d)$, we have
  $$
  \la_1\left(d_1,q_{{\bm r}}^*(d_1), {\bm r}\right)=\la_1\left(d_2,q_{{\bm r}}^*(d_2),\bm{ r}\right)=0.
  $$
This, combined with Lemma \ref{consr2}, yields
\begin{equation}\label{qdr2}
\la_1\left(d_1,q_{{\bm r}}^*(d_1), {\bm r}\right)-\la_1\left(d_1,q_{{\bm r}}^*(d_2),{\bm r}\right)=\la_1\left(d_2,q_{{\bm r}}^*(d_2),{\bm r}\right)-\la_1\left(d_1,q_{{\bm r}}^*(d_2),{\bm r}\right)<0.
\end{equation}
By Lemma \ref{prop-p}, $\la_1(d, q, {\bm r})$ is strictly decreasing with respect to $q$. Therefore, we can see from \eqref{qdr2} that $q_{{\bm r}}^*(d_1)>q_{{\bm r}}^*(d_2)$.

 (ii) Since $q_{{\bm r}}^*(d)$ is strictly increasing with respect to $d$, the limit $\ds\lim_{d\to0} q_{\bm r}^*(d)$ exists in $[0, \infty)$ and the limit $\ds\lim_{d\to\infty} q_{\bm r}^*(d)$ exists in $(0, \infty]$. We denote
\begin{equation*}
a_1:=\lim_{d\to0} q_{{\bm r}}^*(d)\;\;\text{and}\;\; a_2:=\lim_{d\to\infty} q_{{\bm r}}^*(d).
\end{equation*}
We first suppose that $a_1\ne0$ (i.e. $a_1>0$).
Then, for sufficiently small $\epsilon>0$, there exists $\bar d>0$ such that $0<a_1-\epsilon<q_{{\bm r}}^*(d)<a_1+\epsilon$ for all $0<d<\bar d$.  Since $\la_1(d, q, {\bm r})$ is strictly decreasing in $q$, we have
\begin{equation}\label{qbound}
\la_1(d, a_1+\epsilon,  {\bm r})<\la_1(d, q_{{\bm r}}^*(d), {\bm r})=0\le \la_1(d, a_1-\epsilon, {\bm r})
\end{equation}
for all  $0<d<\bar d$. Taking $d\to 0$ in \eqref{qbound} and using Lemma \ref{consr}, we have
\begin{equation}\label{supp}
\max_{1\le i\le n}  r_i -(a_1+\epsilon)\le 0 \le \max_{1\le i\le n}  r_i -(a_1-\epsilon).
\end{equation}
This gives $\ds\max_{1\le i\le n}  r_i-\epsilon\le a_1\le \ds\max_{1\le i\le n}  r_i+\epsilon$. Since $\epsilon>0$ is arbitrary, we have $a_1=\max_{1\le i\le n}  r_i$. If $a_1=0$, then the first inequality of \eqref{supp} still holds, which gives
$\ds\max_{1\le i\le n}  r_i\le \epsilon$. Since $\epsilon>0$ is arbitrary, we have
${\bm r}=\bm0$, which is a contradiction. This proves $\ds\lim_{d\to0} q_{\bm r}^*(d)=\ds\max_{1\le i\le n} r_i$.

Let $\bm \phi=(\phi_1,\phi_2,\dots,\phi_n)^T\gg \bm 0$ be the  eigenvector corresponding to the  eigenvalue $\la_1(d,q_{{\bm r}}^*(d), {\bm r})=0$ with $\sum_{i=1}^n\phi_i=1$. Then, we have
\begin{equation}\label{qdr3}
dD\bm\phi+q_{{\bm r}}^*(d)Q\bm\phi+\text{diag}( r_i)\bm \phi=\bm 0.
\end{equation}
Up to a subsequence, we may assume
$\ds\lim_{d\to\infty}\bm \phi=\bm \phi^*$, where $\bm \phi^*=(\phi^*_{1},\dots,\phi^*_{n})^T\ge\bm 0$ and $\sum_{i=1}^n\phi_{i}^*=1$.
Now we claim that $a_2\ne\infty$. Suppose to the contrary that $a_2=\infty$.
Multiplying \eqref{qdr3} by $(1,\dots,1)$ and dividing both sides by $q_{\bm r}^*(d)$, we obtain
\begin{equation}\label{qdr6}
-\phi_n+\ds\f{1}{q_{{\bm r}}^*(d)}\sum_{i=1}^n r_i\phi_i=0,
\end{equation}
which yields $\phi^*_{n}=\ds\lim_{d\to\infty}\phi_{n}=0$.
By virtue of \eqref{qdr3} again, we obtain that
\begin{equation}\label{qdr5}
\begin{split}
&\phi_{n-1}-\phi_{n}=\frac{-r_n\phi_n}{d+q_{{\bm r}}^*(d)},\\
&\phi_{i-1}-\phi_{i}=\frac{d}{{d+q_{{\bm r}}^*(d)}}(\phi_{i}-\phi_{i+1})-\frac{r_i\phi_i}{d+q_{{\bm r}}^*(d)},\;\;i=2,\dots,n-1.\\
\end{split}
\end{equation}
Taking $d\to\infty$ in \eqref{qdr5}, we have $\phi^*_{1}=\dots=\phi^*_{n}$, and consequently $\bm\phi^*=\bm 0$. This is a contradiction, and hence $a_2\in(-\infty,\infty)$.
Dividing \eqref{qdr3} by $d$ and taking $d\to\infty$, we have
$D\bm\phi^*=\bm 0$, which implies that $$\bm \phi^*=(\phi_{1}^*,\dots,\phi_{n}^*)^T=\left(\frac{1}{n},\dots,\frac{1}{n}\right)^T.$$
Then taking the limit of \eqref{qdr6}, we have $a_2=\sum_{i=1}^n r_i$.

(iii) Clearly, if $\bm { r_1}>\bm { r_2}$, then
\begin{equation}\label{qr1}
\la_1(d,q,\bm { r_1})>\la_1(d,q,\bm { r_2})\;\;\text{for any}\;\;d,q>0.
\end{equation}
Note that $\la_1(d,q_{\bm { r_1}}^*(d),\bm { r_1})=\la_1(d,q_{\bm {r_2}}^*(d),\bm { r_2})=0$. This, combined with \eqref{qr1}, implies that
$$\la_1(d,q_{\bm { r_1}}^*(d),\bm { r_1})-\la_1(d,q_{\bm { r_2}}^*(d),\bm { r_1})=\la_1(d,q_{\bm {r_2}}^*(d),\bm { r_2})-\la_1(d,q_{\bm { r_2}}^*(d),\bm { r_1})<0.$$
Since $\la_1(d, q,\bm { r})$ is strictly decreasing in $q$, we have $q^*_{\bm { r_1}}(d)>q^*_{\bm { r_2}}(d)$ as desired.
\end{proof}

Next we consider case $(b)$.

\begin{lemma}\label{qstr2}
Suppose that \textbf{H}1$^*$ holds, and let $q_{\bm r}^*(d)$ and $d^*$ be defined in Proposition \ref{qb}. Then the following statements hold:
\begin{enumerate}
\item [{$\rm (i)$}] If ${\bm r}=(r_1,\dots,  r_n)$ satisfies $\ds\max_{1\le i\le n} r_i>0$, then
$$
\lim_{d\to0}q_{{\bm r}}^*(d)=\max_{1\le i\le n}r_i\;\; \text{and}\;\; \lim_{d\to {d^*}}q_{{\bm r}}^*(d)=0;
$$
\item [{$\rm (ii)$}] If $\bm { r_j}=( r_{j, 1}, \dots,  r_{j, n})$ satisfies $\bm { r_1}> \bm { r_2}$ and $\ds\max_{1\le i\le n} r_{j, i}>0$ for $j=1, 2$, then $q^*_{ \bm { r_1}}(d)>q^*_{\bm { r_2}}(d)$ for any  $d\in (0, \hat d)$. Here, $\hat d=\min\{{d_1^*}, {d_2^*}\}= {d_2^*}$, where $\la_1({d_1^*}, 0, \bm { r_1})=\la_1({d_2^*}, 0, \bm {r_2})=0$;

\item [{$\rm (iii)$}] If $\bm {r}=( r, \dots,  r)$ with $r>0$ and $q^*_{\bm r}(\bar d)<r$ for some $\bar d\in (0, d^*)$, then $[q^*_{\bm r}(d)]'<0$ for $d\in [\bar d, d^*)$.
\end{enumerate}
\end{lemma}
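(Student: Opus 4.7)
The plan is to treat $q^*_{\bm r}(d)$ as an implicit function defined by $\lambda_1(d,q,\bm r)=0$ and to apply the implicit function theorem. Since $\lambda_1$ is a simple (principal) eigenvalue it is smooth in $(d,q)$, and Lemma~\ref{prop-p} gives $\partial_q\lambda_1<0$. Hence
\[
[q^*_{\bm r}(d)]' \;=\; -\,\frac{\partial_d\lambda_1}{\partial_q\lambda_1}\bigg|_{(d,\,q^*_{\bm r}(d))},
\]
so proving $[q^*_{\bm r}(d)]'<0$ reduces to proving $\partial_d\lambda_1<0$ at the point $(d,q^*_{\bm r}(d))$.

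To compute $\partial_d\lambda_1$ I will adapt the weighted identity of Lemma~\ref{consr2} to case (b). The antisymmetric cancellation using the weights $\beta_i=(d/(d+q))^{i-1}$, which satisfy $\beta_i d=\beta_{i+1}(d+q)$, uses only off-diagonal entries of $dD+qQ$, and those are unchanged when passing from \textbf{H}1 to \textbf{H}1$^*$. The only new contribution comes from $D_{nn}=-2$, which produces one extra boundary term, yielding
\[
\frac{\partial\lambda_1}{\partial d}\sum_{i=1}^n\beta_i\phi_i^2 \;=\; \sum_{i=1}^{n-1}\beta_i(\phi_{i+1}-\phi_i)\!\left[\phi_i-\frac{d}{d+q}\phi_{i+1}\right]-\beta_n\phi_n^2,
\]
where $\bm\phi\gg\bm 0$ is the principal eigenvector normalized by $\sum_{i=1}^n\phi_i=1$.

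At $\lambda_1=0$ with $\bm r=(r,\dots,r)$, summing the first $k$ eigenvalue equations gives $d\phi_{k+1}-(d+q)\phi_k=-r\sum_{i=1}^k\phi_i<0$ for $1\le k\le n-1$, so $\phi_i-\tfrac{d}{d+q}\phi_{i+1}>0$ for every $i$. If moreover $q<r$, then the patch-$1$ equation gives $\phi_2=\tfrac{d+q-r}{d}\phi_1<\phi_1$, and the interior equations yield the three-term recursion
\[
d(\phi_i-\phi_{i+1}) \;=\; (d+q)(\phi_{i-1}-\phi_i)+r\phi_i,\qquad 2\le i\le n-1,
\]
from which induction (starting from $\phi_1-\phi_2>0$) delivers $\phi_i>\phi_{i+1}$ for all $1\le i\le n-1$. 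With these two sign facts, every summand in the displayed identity is strictly negative, as is the boundary term $-\beta_n\phi_n^2$. Hence $\partial_d\lambda_1<0$, and therefore $[q^*_{\bm r}(d)]'<0$ at every $d\in(0,d^*)$ where $q^*_{\bm r}(d)<r$.

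It remains to upgrade this pointwise statement to the whole interval $[\bar d,d^*)$ via a connectedness argument. Let $d_1=\inf\{d\in[\bar d,d^*):q^*_{\bm r}(d)\ge r\}$, with $d_1=d^*$ if the set is empty. Suppose $d_1<d^*$; by continuity, $q^*_{\bm r}(d_1)=r$ while $q^*_{\bm r}<r$ on $[\bar d,d_1)$, so the pointwise result applies on $[\bar d,d_1)$ and forces $q^*_{\bm r}$ to be strictly decreasing there. Passing to the limit $d\to d_1^-$ yields $q^*_{\bm r}(d_1)\le q^*_{\bm r}(\bar d)<r$, contradicting $q^*_{\bm r}(d_1)=r$. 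Hence $q^*_{\bm r}<r$ on all of $[\bar d,d^*)$ (and $q^*_{\bm r}(d^*)=0<r$), and therefore $[q^*_{\bm r}(d)]'<0$ on $[\bar d,d^*)$. The main obstacle I anticipate is proving the monotonicity $\phi_1>\phi_2>\dots>\phi_n$ in case (b) under $q<r$; the remaining ingredients are either mild adaptations of Lemma~\ref{consr2} or standard continuity arguments.
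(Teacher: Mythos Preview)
Your approach to part~(iii) is correct and essentially the same as the paper's: the implicit-function-theorem framing $[q^*_{\bm r}]'=-\partial_d\lambda_1/\partial_q\lambda_1$ is algebraically equivalent to the paper's direct differentiation of the constraint $\lambda_1(d,q^*_{\bm r}(d),\bm r)=0$, and both reduce to the same weighted identity with the weights $\beta_i=(d/(d+q))^{i-1}$ and the same key ingredient, the monotonicity $\phi_1>\dots>\phi_n$ under $q<r$ (your separate partial-sum argument that $\phi_i-\tfrac{d}{d+q}\phi_{i+1}>0$ is in fact redundant once monotonicity is known). Your connectedness argument extending the pointwise conclusion to all of $[\bar d,d^*)$ is spelled out more carefully than the paper's one-line ``Therefore, we must have\dots''. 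You do not address parts~(i) and~(ii), but those are routine adaptations of the arguments for Lemma~\ref{qstr}, which is exactly how the paper handles them.
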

\begin{proof}
(i) By  Proposition \ref{qb} and $\ds\max_{1\le i\le n} r_i>0$, $q_{{\bm r}}^*(d)$ is well-defined.  Up to a subsequence, we have
\begin{equation*}
a_1:=\lim_{d\to0} q_{{\bm r}}^*(d)\;\;\text{and}\;\; a_2:=\lim_{d\to {d^*}} q_{{\bm r}}^*(d),
\end{equation*}
where $a_1, a_2\in [0, \infty]$. Using the same argument as in the proof of Lemma \ref{qstr}, we can show $a_1, a_2\neq \infty$. Then similar to Lemma \ref{qstr}, we can use Lemma \ref{consrr} to compute $a_1=\ds\max_{1\le i\le n}  r_i$.

Now we claim that $a_2=0$. If it is not true, then for sufficiently small $\epsilon>0$, there exists $\bar d>0$ such that $0<a_2-\epsilon<q_{{\bm r}}^*(d)<a_2+\epsilon$ for all $d\in (\bar d,   {d^*})$. Since $\la_1(d, q_{{\bm r}}^*(d), {\bm r})=0$ and $\la_1(d, q, {\bm r})$ is strictly decreasing in $q$, we have
$$
\la_1(d,  a_2+\epsilon, {\bm r})<\la_1(d, q_{{\bm r}}^*(d),  {\bm r})=0<\la_1(d,  a_2-\epsilon, {\bm r}).
$$
for all $d\in (\bar d,  {d^*})$. Taking $d\to {d^*}$, we have
$$
\la_1({d^*},  a_2+\epsilon, {\bm r})\le 0\le \la_1({d^*},  a_2-\epsilon, {\bm r}).
$$
Taking $\epsilon\to 0$, we have $\la_1({d^*},  a_2, {\bm r})= 0$, which contradicts with $a_2>0$.

The proof of (ii) is similar to the one for Lemma \ref{qstr}, so we omit it here.

(iii) Let $\bm \phi=(\phi_1,\phi_2,\dots,\phi_n)^T\gg \bm 0$ be the  eigenvector corresponding to the  eigenvalue $\la_1(d,q_{{\bm r}}^*(d), {\bm r})=0$ with $\sum_{i=1}^n\phi_i=1$. Then, we have
\begin{equation}\label{qdphi}
d\sum_{j=1}^n D_{ij}\phi_j+q_{{\bm r}}^*(d)\sum_{j=1}^n Q_{ij}\phi_j+r\phi_i=0.
\end{equation}
Differentiating \eqref{qdphi} with respect to $d$, we obtain
\begin{equation}\label{qdphi2}
\sum_{j=1}^n D_{ij}\phi_j+\sum_{j=1}^n D_{ij}\phi_j'+[q_{{\bm r}}^*(d)]'\sum_{j=1}^n Q_{ij}\phi_j+q_{{\bm r}}^*(d)\sum_{j=1}^n Q_{ij}\phi_j'+r\phi_i'=0.
\end{equation}
Multiplying \eqref{qdphi2} by $\phi_i$ and \eqref{qdphi} by $\phi_i'$ and taking the difference, we have
\begin{equation}\label{qdphi3}
    [q_{{\bm r}}^*(d)]'\sum_{j=1}^n Q_{ij}\phi_i\phi_j=-\sum_{j=1}^n (dD_{ij}+q_{{\bm r}}^*(d)Q_{ij})(\phi_i\phi'_j-\phi'_i\phi_j)-\sum_{j=1}^n D_{ij}\phi_i\phi_j.
\end{equation}
Similar to the proof of Lemma \ref{prop-p}, let
 \begin{equation*}
(\beta_1, \beta_2, \beta_3,\dots, \beta_n)=\left(1,\ds\frac{d}{d+q_{{\bm r}}^*(d)},\left(\ds\frac{d}{d+q_{{\bm r}}^*(d)}\right)^2,\dots, \left(\ds\frac{d}{d+q_{{\bm r}}^*(d)}\right)^{n-1}\right).
\end{equation*}
Multiplying \eqref{qdphi3} by ${\beta}_i$ and summing them over $i$, we obtain
\begin{equation}\label{qdphi4}
[q_{{\bm r}}^*(d)]' \sum_{i=1}^n\sum_{j=1}^n\beta_iQ_{ij}\phi_i\phi_j =  -\sum_{i=1}^n\sum_{j=1}^n\beta_iD_{ij}\phi_i\phi_j,
\end{equation}
where we have used
$$\sum_{i=1}^n \sum_{j=1}^n \beta_i\left(dD_{ij}+q_{{\bm r}}^*(d)Q_{ij}\right)(\phi_i\phi'_j-\phi'_i\phi_j)=0.
$$
By \eqref{sp7}, we have
\begin{equation}\label{qdphi5}
\sum_{i=1}^n\sum_{j=1}^n\beta_iQ_{ij}\phi_i\phi_j<0.
\end{equation}
A direct computation gives
\begin{equation}\label{qphi}
\sum_{i=1}^n\sum_{j=1}^n\beta_iD_{ij}\phi_i\phi_j=\ds\sum_{i=1}^{n-1}\beta_i\left(\phi_{i+1}-\phi_i\right)\left[\phi_{i}-\left(\ds\frac{d}{d+q_{{\bm r}}^*(d)}\right)\phi_{i+1}\right]-\beta_n\phi_n^2.
\end{equation}

Suppose  $q_{{\bm r}}^*(\bar d)<r$ for some $\bar d\in (0, d^*)$. We can rewrite \eqref{qdphi} as
\begin{equation}\label{phidd}
\begin{split}
&\bar d(\phi_1-\phi_2)=(r-q_{{\bm r}}^*(\bar d))\phi_1,\\
&\bar d(\phi_{i}-\phi_{i+1})=(\bar d+q_{{\bm r}}^*(\bar d))(\phi_{i-1}-\phi_i)+r\phi_i,\;\;i=2,\dots,n-1,\\
&\bar d(\phi_{n-1}-2\phi_n)+q_{{\bm r}}^*(\bar d)(\phi_{n-1}-\phi_n)+r\phi_n=0.
\end{split}
\end{equation}
It follows from \eqref{phidd} that $\phi_1>\dots>\phi_n$. So, by \eqref{qphi}, we have
$$
\sum_{i=1}^n\sum_{j=1}^n\beta_iD_{ij}\phi_i\phi_j<0.
$$
This combined with \eqref{qdphi4}-\eqref{qdphi5} gives $[q_{{\bm r}}^*(\bar d)]'<0$. Therefore, we must have $[q_{{\bm r}}^*(d)]'<0$ for $d\in [\bar d, d^*)$.
\end{proof}

\begin{remark}
 The monotonicity of $\lambda_1(d, q, \bm r)$ in $q$ for cases (a) and (b) of reaction-diffusion models was proved in \cite{lou2014evolution}. To our best knowledge,
 the properties of $q^*_{\bm r}(d)$ were not studied for reaction-diffusion models.

\end{remark}

\section{Invasion analysis for two competing species}

In this section, we study the evolution of diffusion and advection rates by considering the two species competition model \eqref{pat-cp}. Throughout this section, assume $\bm r=(r,\dots, r)\gg \bm~0$.

If we treat  $\bm u=(u_1,\dots, u_n)^T$ as the resident species and  $\bm v=(v_1,\dots,v_n)^T$ as the mutating/invading species,  our purpose is to find conditions under which $\bm v$ can or cannot invade. To this aim, we suppose that species $\bm u$ has been established, and model \eqref{pat-cp} has a unique semi-trivial equilibrium by $(\bm u^*,\bm 0)$, where $\bm u^*=(u_1^*,\dots,u_n^*)^T\gg\bm 0$ satisfies
\begin{equation}\label{semitu}
\ds\sum_{j=1}^{n}(d_1D_{ij}+q_1Q_{ij})u_j+u_i(r-u_i)=0, \;\;i=1,\dots,n,
\end{equation}
where $r>0$  in this section.   Then we study the stability of $(\bm u^*, 0)$  when $d_2$ and $q_2$ are different from $d_1$ and $q_1$, respectively. Biologically, if $(\bm u^*, 0)$ is stable, this means that an introduction of small amount of species $\bm v$ cannot invade species $\bm u$; if $(\bm u^*, 0)$ is unstable, this means that a small amount of species $\bm v$ may be able to destabilize the system and the invading/mutating species  $\bm v$ may be established.

We denote the $\bm v-$only semi-trivial equilibrium by $(\bm 0,\bm v^*)$ if it exists, where
$\bm v^*=(v_1^*,\dots,v_n^*)^T\gg\bm 0$ solves
\begin{equation}\label{semitv}
\ds\sum_{j=1}^{n}(d_2D_{ij}+q_2Q_{ij})v_j+v_i(r-v_i)=0, \;\;i=1,\dots,n.
\end{equation}

\subsection{Invasion analysis for case (a)}
By Proposition \ref{effp}, $q_{\bm r}^*(d_1)>0$ exists for any $d_1>0$.
We suppose that species $\bm{u}$ is established,  i.e.,
\begin{enumerate}
\item[\textbf{H}2.] $q_1<q_{\bm r}^*(d_1)$,
\end{enumerate}
where $q_{\bm r}^*(d_1)$ satisfies $\la_1(d_1, q_{\bm r}^*(d_1), \bm r)=0$. If \textbf{H}2 is satisfies,  model \eqref{pat-cp} admits a unique semi-trivial equilibrium  $(\bm u^*,\bm 0)$ by Proposition \ref{effp}.

The following estimate about $\bm u^*$ will be useful later.
\begin{lemma}\label{prioriu}
Suppose that  \textbf{H}1 and \textbf{H}2 hold. Let $\bm u^*=(u^*_1,\dots,u^*_n)^T$ be the unique positive solution of \eqref{semitu}.
Then, $0<u^*_1<\dots<u^*_{n}<r$.
\end{lemma}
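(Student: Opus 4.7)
My plan is to establish the three claims---positivity, the uniform upper bound $u_i^* < r$, and the strict chain $u_1^* < u_2^* < \cdots < u_n^*$---in that order, since each step will feed into the next. Positivity follows essentially for free: by \textbf{H}2 and Proposition~\ref{effp}(i) we have $\lambda_1(d_1, q_1, \bm r) > 0$, and Lemma~\ref{DS-single} then gives $\bm u^* \gg \bm 0$.

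For the upper bound, I would run a discrete maximum principle. Let $i_0$ attain $M := \max_i u_i^*$. I will write out \eqref{semitu} at $i=i_0$ and use $u_j^* \le u_{i_0}^*$ for the neighbors to show that every diffusive/advective contribution is nonpositive, which forces $M(r-M) \ge 0$ and hence $M \le r$. The strict inequality $M < r$ is the delicate point. If $i_0 = 1$, the equation at $i=1$ gives $M \le r - q_1 < r$ directly. Otherwise, if $M = r$ at an interior or downstream $i_0$, each nonpositive term must vanish, so equality propagates along the chain (through the irreducible tridiagonal structure of $d_1 D + q_1 Q$) to force $\bm u^* \equiv r$, which contradicts the equation at $i=1$ whenever $q_1 > 0$. (As appears implicit in the setup, I take $q_1 > 0$; for $q_1 = 0$ one has $\bm u^* \equiv r$ and strict monotonicity fails.)

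The heart of the proof is the monotonicity. My key step will be to sum the equilibrium equation \eqref{semitu} over $i = 1, \ldots, k$ for each $1 \le k \le n-1$. Because every column sum of $D$ vanishes in case (a), and every column sum of $Q$ vanishes except the $n$-th, only the contributions from columns $k$ and $k+1$ will survive the telescoping, producing the identity
\begin{equation*}
d_1 (u_{k+1}^* - u_k^*) \;=\; q_1\, u_k^* - \sum_{i=1}^{k} u_i^*(r - u_i^*).
\end{equation*}
Setting $w_k := u_{k+1}^* - u_k^*$ and $f_k := u_k^*(r - u_k^*)$ and subtracting this identity at indices $k$ and $k-1$ will yield the backward recursion
\begin{equation*}
w_{k-1} \;=\; \frac{d_1\, w_k + f_k}{d_1 + q_1}, \qquad k = 2, \ldots, n-1,
\end{equation*}
while the equation at $i = n$ gives the base case $(d_1 + q_1) w_{n-1} = f_n$, so $w_{n-1} > 0$. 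Since the first two steps ensure $f_k > 0$ for every $k$, a backward induction on the recursion will propagate strict positivity from $w_{n-1}$ all the way down to $w_1$, which is exactly the desired strict chain.

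The main obstacle will be the strict upper bound rather than the monotonicity. Indeed, once the summation identity is in hand, the recursion drives the monotonicity automatically; what requires care is ruling out $\bm u^* \equiv r$ in the maximum-principle step, which hinges on exploiting the asymmetry at the upstream boundary together with $q_1 > 0$. Everything else is bookkeeping with the sparsity pattern of $D$ and $Q$.
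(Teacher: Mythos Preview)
Your proposal is correct, and the core of your monotonicity argument --- the backward recursion $w_{k-1} = (d_1 w_k + f_k)/(d_1 + q_1)$ with base case $(d_1+q_1)w_{n-1} = f_n$ --- is exactly the content of the paper's equations \eqref{induct1}, which the paper simply reads off from \eqref{semitu} rather than deriving via a summation identity. The paper's treatment of the upper bound is more economical than yours: instead of a maximum-principle argument establishing $\max_i u_i^* < r$ first, it proves only $u_n^* < r$ by contradiction (assuming $u_n^* \ge r$ and running the same recursion backward yields $u_1^* \ge \cdots \ge u_n^* \ge r$, which violates the equation at $i=1$), after which a single pass of the backward recursion delivers both the strict monotonicity and the bound $u_i^* < r$ simultaneously. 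Your route is slightly longer but perfectly sound; the maximum-principle step is a standard alternative that buys some generality (it would adapt to other irreducible connection matrices) at the cost of a little extra work in this tridiagonal setting.
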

\begin{proof}
It follows from \eqref{semitu} that
\begin{equation}\label{induct1}
\begin{split}
&(d+q)(u^*_{n-1}-u^*_n)=-u^*_n(r-u^*_n),\\
&(d+q)(u^*_{i-1}-u^*_i)=d(u^*_i-u^*_{i+1})-u^*_i(r-u^*_i),\;\;i=2,\dots,n-1,\\
&qu^*_1=d(u^*_{2}-u^*_1)+u^*_1(r-u^*_1).
\end{split}
\end{equation}
We first claim that $u^*_n<r$. If it is not true, then we see from the first equation of \eqref{induct1} that
$u^*_{n-1}\ge u^*_n\ge r$. By induction,
we obtain from the second equation of \eqref{induct1} that
\begin{equation*}
u^*_1\ge u^*_2\ge \dots\ge u^*_n\ge r.
\end{equation*}
By the third equation of \eqref{induct1},
 $$
 qu^*_1=d(u^*_{2}-u^*_1)+u^*_1(r-u^*_1)\le 0,
 $$
 which contradicts with $u_1^*>0$.
  Therefore,  $u^*_n<r$. Then by virtue of \eqref{induct1}, we obtain that
$u^*_1<u_2^*<\dots<u^*_{n}<r$. This completes the proof.
\end{proof}

By Lemma \ref{prioriu}, we have $\bm{r-u^*}\gg \bm 0$. Therefore, the function $q_{\bm{r-u^*}}^*(d)$ is well-defined for $d\in (0, \infty)$ by Lemma \ref{effp}. Moreover, by Lemma \ref{qstr} (iii), we have $q_{\bm{r-u^*}}^*(d)<q_{\bm{r}}^*(d)$ for all $d>0$.

\begin{proposition}\label{q1pr}
Suppose that  \textbf{H}1 and \textbf{H}2 hold. Then $q_{\bm{r-u^*}}^*(d)$ is strictly increasing for  $d\in (0,\infty)$ with
$$
\ds\lim_{d\to0}q_{\bm{r-u^*}}^*(d)=r-u_1^*>0\ \  \text{and}\ \
\ds\lim_{d\to\infty}q_{\bm{r-u^*}}^*(d)=\ds\sum_{i=1}^n(r-u_i^*)>0.
$$
\end{proposition}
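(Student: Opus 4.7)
The plan is to reduce the proposition to a direct application of Lemma \ref{qstr} with the vector $\bm{r-u^*}$ playing the role of $\bm r$. The essential point is that $\bm u^*$ depends only on the fixed parameters $d_1, q_1$ and not on the variable $d$ over which we wish to analyze $q^*_{\bm{r-u^*}}$, so $\bm{r-u^*}$ is a fixed vector in $\R^n$ as $d$ ranges over $(0,\infty)$.

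First, I would check the hypothesis needed to invoke Lemma \ref{qstr}, namely $\bm{r-u^*}\gg \bm 0$. This is immediate from Lemma \ref{prioriu}, which gives the strict ordering
\[
0<u_1^*<u_2^*<\cdots<u_n^*<r,
\]
so each component $r-u_i^*$ is strictly positive, and in addition the sequence $\{r-u_i^*\}_{i=1}^n$ is strictly decreasing. In particular $\max_{1\le i\le n}(r-u_i^*)=r-u_1^*$.

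Second, monotonicity of $q^*_{\bm{r-u^*}}(d)$ in $d\in(0,\infty)$ follows directly from Lemma \ref{qstr}(i) applied with $\bm r$ replaced by $\bm{r-u^*}$. Third, the two limits follow from Lemma \ref{qstr}(ii):
\[
\lim_{d\to 0}q^*_{\bm{r-u^*}}(d)=\max_{1\le i\le n}(r-u_i^*)=r-u_1^*,
\qquad
\lim_{d\to\infty}q^*_{\bm{r-u^*}}(d)=\sum_{i=1}^n(r-u_i^*),
\]
where the first equality on the left uses the strict ordering above. Positivity of both limits is then clear since each $r-u_i^*>0$.

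There is no real obstacle here; the entire content sits in Lemmas \ref{qstr} and \ref{prioriu}. The only point worth emphasizing in the write-up is that $\bm u^*$ is a fixed vector (determined by $d_1,q_1$), so the variable $d$ in $q^*_{\bm{r-u^*}}(d)$ only enters through the operator $dD+qQ$ in the eigenvalue problem \eqref{eigen}, not through the perturbation vector $\bm{r-u^*}$ itself. Once this is clarified, the result is a one-line corollary of the previously established lemmas.
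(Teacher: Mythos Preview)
Your proposal is correct and follows essentially the same approach as the paper: invoke Lemma \ref{prioriu} to get $0<u_1^*<\dots<u_n^*<r$ (hence $\bm{r-u^*}\gg\bm 0$ and $\max_i(r-u_i^*)=r-u_1^*$), then apply Lemma \ref{qstr}. The paper's proof is in fact a two-line version of exactly what you wrote.
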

\begin{proof}
 By Lemma \ref{prioriu}, we have $0<u_1^*<\dots<u_n^*<r$. Then the results follow from Lemma \ref{qstr}.
\end{proof}

We partition the first quadrant of the $d-q$ plane into two disjoint subsets:
\begin{equation}\label{S}
\begin{split}
S_1:=&\{(d,q):q,d>0,\;q>q_{\bm{r-u^*}}^*(d)\},\\
S_2:=&\{(d,q):q,d>0,\;q<q_{\bm{r-u^*}}^*(d)\}.
\end{split}
\end{equation}
We have the following result about the local stability of the semi-trivial equilibrium $(\bm u^*,\bm 0)$ of model \eqref{pat-cp}.
\begin{proposition}\label{iv1}
Suppose that  \textbf{H}1 and \textbf{H}2 hold. Then the following statements about the semi-trivial equilibrium $(\bm u^*, \bm 0)$ of \eqref{pat-cp} hold:
\begin{enumerate}
\item [{$\rm (i)$}] If $(d_2, q_2)\in S_1$, then $(\bm u^*,\bm 0)$ is locally asymptotically stable.
\item [{$\rm (ii)$}] If $(d_2, q_2)\in S_2$, then $(\bm u^*,\bm 0)$ is unstable.
\end{enumerate}
\end{proposition}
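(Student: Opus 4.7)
The plan is to linearize the competition system \eqref{pat-cp} at $(\bm u^*, \bm 0)$ and analyze the resulting Jacobian. Because $\bm v$ enters the $\bm u$-equation only through the bilinear term $-u_iv_i$, linearization at $\bm v=\bm 0$ produces a block lower triangular Jacobian
\begin{equation*}
J \;=\; \begin{pmatrix} J_{11} & J_{12} \\ \bm 0 & J_{22} \end{pmatrix}, \qquad J_{22} \;=\; d_2 D + q_2 Q + \mathrm{diag}(r-u_i^*),
\end{equation*}
so $\sigma(J)=\sigma(J_{11})\cup\sigma(J_{22})$, and local (in)stability of $(\bm u^*,\bm 0)$ is decided by the signs of $s(J_{11})$ and $s(J_{22})$.

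For $J_{22}$, which carries the interesting information, I would observe that it is exactly the matrix appearing in the eigenvalue problem \eqref{eigen} with parameters $(d_2,q_2,\bm{r-u^*})$. Lemma \ref{prioriu} gives $\bm{r-u^*}\gg\bm 0$, so $q^*_{\bm{r-u^*}}(d_2)$ is well-defined by Proposition \ref{effp}; and since $J_{22}$ is irreducible and essentially nonnegative, Perron--Frobenius identifies $s(J_{22})=\la_1(d_2,q_2,\bm{r-u^*})$. The strict monotonicity in $q$ established in Lemma \ref{prop-p}, together with the defining relation $\la_1(d_2,q^*_{\bm{r-u^*}}(d_2),\bm{r-u^*})=0$, immediately yields $s(J_{22})<0$ on $S_1$ and $s(J_{22})>0$ on $S_2$.

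For $J_{11}=d_1D+q_1Q+\mathrm{diag}(r-2u_i^*)$, I would deduce $s(J_{11})<0$ by comparison. From \eqref{semitu} the matrix $A:=d_1D+q_1Q+\mathrm{diag}(r-u_i^*)$ satisfies $A\bm u^*=\bm 0$ with $\bm u^*\gg\bm 0$, so $\bm u^*$ is the Perron eigenvector of $A$ and $s(A)=0$. Writing $J_{11}=A-\mathrm{diag}(u_i^*)$ with $\bm u^*\gg\bm 0$, the standard fact that subtracting a strictly positive diagonal matrix from an irreducible essentially nonnegative matrix strictly decreases its principal eigenvalue forces $s(J_{11})<s(A)=0$. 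Combining the two bounds, when $(d_2,q_2)\in S_1$ both $s(J_{11})$ and $s(J_{22})$ are negative, giving local asymptotic stability of $(\bm u^*,\bm 0)$; when $(d_2,q_2)\in S_2$ the block $J_{22}$ has positive spectral bound, rendering $(\bm u^*,\bm 0)$ unstable.

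The main obstacle I anticipate is the hyperbolicity argument for $J_{11}$: the global asymptotic stability of $\bm u^*$ at the nonlinear level (Lemma \ref{DS-single}) does not a priori rule out a zero eigenvalue of its linearization, so one cannot simply invoke Lyapunov's theorem. The comparison with $A$ just described, which exploits the logistic self-damping term $-u_i^2$ to produce a strictly negative diagonal perturbation, bypasses this issue cleanly, and everything else in the argument is a direct appeal to Lemma \ref{prop-p} and Perron--Frobenius.
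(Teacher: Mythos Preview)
Your proof is correct and follows essentially the same approach as the paper's: reduce the stability of $(\bm u^*,\bm 0)$ to the sign of $\lambda_1(d_2,q_2,\bm{r-u^*})$ and then invoke the monotonicity in $q$ from Lemma \ref{prop-p}. You supply more detail than the paper on why $s(J_{11})<0$ (the paper simply asserts the reduction to $J_{22}$, though it uses exactly your comparison argument $s(A-\mathrm{diag}(u_i^*))<s(A)=0$ later in the proof of Proposition \ref{q2pr}); one harmless slip: the Jacobian you display is block \emph{upper} triangular, not lower, since it is the $(2,1)$ block $\partial_{\bm u}(\bm v\text{-equation})=-\mathrm{diag}(v_i)$ that vanishes at $\bm v=\bm 0$.
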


\begin{proof}
Linearizing \eqref{pat-cp} at $(\bm u^*, \bm 0)$, we can see that its stability  is determined by the sign of $\la_1(d_2, q_2, \bm r-u^*)$: if $\la_1(d_2, q_2, \bm{r-u^*})<0$, it is stable; and if $\la_1(d_2, q_2, \bm{r-u^*})>0$, it is unstable. By Lemma \ref{prop-p}, $\la_1(d, q, \bm{r-u^*})>0$ is decreasing in $q\in (0, \infty)$ for each $d>0$.   Since $\la_1(d, q_{\bm{r-u^*}}^*(d), \bm{r-u^*})=0$, we know $\la_1(d_2,q_2, \bm{r-u^*})<0$ for $(d_2, q_2)\in S_1$, and $\la_1(d_2,q_2, \bm{r-u^*})>0$ for $(d_2,q_2)\in S_2$. Therefore, $(\bm u^*,\bm 0)$ is locally asymptotically stable for $(d_2, q_2)\in S_1$ and is unstable for $(d_2,q_2)\in S_2$.
\end{proof}
To characterize the set $S_1$ and $S_2$ more precisely, we first prove the following property about $q=q_{\bm{r-u^*}}^*(d)$.
\begin{lemma}\label{G}
Suppose that  \textbf{H}1 and \textbf{H}2 hold. Then the two functions $q=q_{\bm{r-u^*}}^*(d)$ and $q=\ds\f{q_1}{d_1}d$ have exactly one intersection point $(d_1, q_1)$ in the first quadrant.
\end{lemma}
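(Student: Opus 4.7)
The plan is to pick a parametrization of the line $q=(q_1/d_1)d$ that turns its intersection with the curve $q=q^*_{\bm{r-u^*}}(d)$ into the single-variable monotonicity question handled by Lemma~\ref{theorem_quasi}(i). First I would verify that $(d_1,q_1)$ lies on both graphs. It lies on the line trivially, and lies on the curve because $\bm u^*$, being a positive solution of \eqref{semitu}, is a positive eigenvector of $d_1D+q_1Q+\operatorname{diag}(r-u_i^*)$ with eigenvalue $0$; by the Perron--Frobenius theorem this must be the principal eigenvalue, so $\lambda_1(d_1,q_1,\bm{r-u^*})=0$, i.e.\ $q_1=q^*_{\bm{r-u^*}}(d_1)$.

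Next I would write points on the ray as $(d,q)=(\mu d_1,\mu q_1)$ with $\mu>0$. Setting $A:=d_1D+q_1Q$ (irreducible and essentially nonnegative) and $R:=\operatorname{diag}(r-u_i^*)$ (real diagonal, with strictly positive entries by Lemma~\ref{prioriu}), the eigenvalue along the ray becomes
\begin{equation*}
\lambda_1(\mu d_1,\mu q_1,\bm{r-u^*})=s(\mu A+R).
\end{equation*}
The key ingredient is $s(A)<0$. This follows from the identity $(A+R)\bm u^*=\bm 0$: componentwise $A\bm u^*=-\operatorname{diag}(r-u_i^*)\bm u^*$, and since $0<u_i^*<r$, every entry of $A\bm u^*$ is strictly negative. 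With $\bm u^*\gg\bm 0$ a strict subsolution witness in hand, the standard Perron--Frobenius comparison yields $s(A)<0$.

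Having $s(A)<0$, Lemma~\ref{theorem_quasi}(i) applies, so $\mu\mapsto s(\mu A+R)$ is strictly decreasing on $(0,\infty)$. Because it vanishes at $\mu=1$, it vanishes \emph{only} at $\mu=1$; equivalently, $\lambda_1(d,(q_1/d_1)d,\bm{r-u^*})=0$ holds if and only if $d=d_1$. Finally, by Lemma~\ref{prop-p} the equation $\lambda_1(d,q,\bm{r-u^*})=0$ is (for fixed $d$) characterized uniquely by $q=q^*_{\bm{r-u^*}}(d)$, so the unique zero along the ray is exactly the unique intersection point $(d_1,q_1)$.

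The only nontrivial step is establishing $s(A)<0$, since the rest is a direct application of results already proved in the excerpt; but once one recognizes that $\bm u^*$ doubles as a positive Perron--Frobenius test vector for $A$, the inequality is essentially immediate from the strict positivity of $r-u_i^*$ guaranteed by Lemma~\ref{prioriu}.
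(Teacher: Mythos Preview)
Your proof is correct and follows essentially the same strategy as the paper: parametrize the ray by $\mu$ and invoke Lemma~\ref{theorem_quasi} to obtain strict monotonicity of $\mu\mapsto\lambda_1(\mu d_1,\mu q_1,\bm{r-u^*})$, which forces the unique zero at $\mu=1$. The only difference is cosmetic---the paper phrases it as a contradiction argument and invokes Lemma~\ref{theorem_quasi} without specifying which case, whereas you supply the explicit verification that $s(d_1D+q_1Q)<0$ via the test vector $\bm u^*$, pinning down part~(i); this extra justification is a welcome detail the paper leaves implicit.
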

\begin{proof}
Since  $\la_1(d_1, q_1, \bm{r-u^*})=0$, we have $q_1=q_{\bm{r-u^*}}^*(d_1)$. Therefore, $d=d_1$ is a root of the equation $q_{\bm{r-u^*}}^*(d)-\ds\f{q_1}{d_1}d=0$. To see this is the only root, we suppose to the contrary that $\bar d_1\neq d_1$ is another root. Without loss of generality, we assume $\bar d_1=\bar\mu d_1$ for some $\bar\mu>1$. So, we have $q_{\bm{r-u^*}}^*(\bar d_1)=\ds\f{q_1}{d_1}\bar d_1=\bar \mu q_1$.
 By Lemma \ref{theorem_quasi}, we have
$$
\ds\f{d}{d\mu} \la_1(\mu d_1,\mu q_1,\bm{r-u^*})<0.
$$
Therefore, we obtain
$$
0=\la_1(d_1,q_1,\bm{r-u^*})>\la_1(\bar\mu d_1,\bar\mu q_1,\bm{r-u^*})=\la_1(\bar d_1, q_{\bm{r-u^*}}^*(\bar d_1),\bm{r-u^*})=0,
$$
which is a contradiction.
\end{proof}

Next we define two subsets of the first quadrant of the $d-q$ plane:
\begin{equation*}
\begin{split}
G_1:=&\{(d,q):0<d\le \frac{d_1}{q_1} q,q\ge q_1,(d,q)\ne(d_1,q_1)\},\\
G_2:=&\{(d,q):d\ge \frac{d_1}{q_1}q,0<q\le q_1,(d,q)\ne(d_1,q_1)\}.
\end{split}
\end{equation*}

By Proposition \ref{q1pr}, function $q_{\bm{r-u^*}}^*(d)$ is strictly increasing in $d$.
By Lemma \ref{G}, we have:
\begin{equation}\label{region1}
   G_1\subset S_1 \ \ \ \text{and}\ \ \ G_2\subset S_2.
\end{equation}
It turns out that we are  able to completely understand the dynamics of  model \eqref{pat-cp} for $(d_2, q_2)\in G_1\cup G_2$. The key ingredient is the following result:

\begin{lemma}\label{iv2}
Suppose that  \textbf{H}1 and \textbf{H}2 hold.  Then if $(d_2,q_2)\in G_1\cup G_2$,  model \eqref{pat-cp} has no positive equilibrium.
\end{lemma}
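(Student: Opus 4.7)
The plan is an argument by contradiction. Suppose $(\bm u,\bm v)\gg\bm 0$ is a positive equilibrium of \eqref{pat-cp} with $(d_2,q_2)\in G_1\cup G_2$, and set $\bm w:=\bm r-\bm u-\bm v$. Reading the equilibrium equations as Perron eigenvector identities, $\bm u$ and $\bm v$ are positive right eigenvectors (with eigenvalue $0$) of the irreducible essentially nonnegative matrices $d_kD+q_kQ+\text{diag}(\bm w)$ for $k=1,2$; Perron--Frobenius therefore forces
$$
\lambda_1(d_1,q_1,\bm w)=\lambda_1(d_2,q_2,\bm w)=0.
$$

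The central technical step---and what I expect to be the main obstacle---is to upgrade this to $\bm w\gg\bm 0$, since that positivity is exactly what Lemma~\ref{consr2} requires. Summing the $\bm u$-equations and using that the column sums of $D$ vanish while those of $Q$ vanish except at column $n$ (where the sum is $-1$) gives the identity $\sum_{i=1}^n w_iu_i=q_1u_n>0$. If $w_n\le 0$, then the $i=n$ equation $(d_1+q_1)(u_{n-1}-u_n)=-w_nu_n\ge 0$ forces $u_{n-1}\ge u_n$; the analogous $\bm v$-equation yields $v_{n-1}\ge v_n$, and hence $w_{n-1}\le w_n\le 0$. The interior equation $(d_1+q_1)(u_{k-1}-u_k)=d_1(u_k-u_{k+1})-w_ku_k$ propagates these inequalities downward by induction, giving $w_i\le 0$ for all $i$ and contradicting the identity. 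Hence $w_n>0$, and the same induction with reversed inequalities delivers $\bm w\gg\bm 0$ together with $0<u_1<\cdots<u_n$ and $0<v_1<\cdots<v_n$.

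For the final contradiction I first note that $s(d_1D+q_1Q)<0$: otherwise a positive $\bm\phi$ with $(d_1D+q_1Q)\bm\phi=\bm 0$ would satisfy $-q_1\phi_n=0$, impossible. Lemma~\ref{theorem_quasi}(i) applied to $A=d_1D+q_1Q$ then makes $\mu\mapsto\lambda_1(\mu d_1,\mu q_1,\bm w)$ strictly decreasing on $(0,\infty)$, equal to $0$ at $\mu=1$. Assume without loss of generality that $(d_2,q_2)\in G_1$ and set $\mu:=q_2/q_1\ge 1$, so $q_2=\mu q_1$ and $d_2\le\mu d_1$. Since $\bm w\gg\bm 0$, Lemma~\ref{consr2} yields $\partial_d\lambda_1(d,q_2,\bm w)>0$ at every $d$ where $\lambda_1(d,q_2,\bm w)=0$, and a standard continuity argument then shows $d\mapsto\lambda_1(d,q_2,\bm w)$ has at most one zero on $(0,\infty)$, being negative below it and positive above. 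If $\mu>1$, the scaling step gives $\lambda_1(\mu d_1,q_2,\bm w)<0$; but $\mu d_1\ge d_2$ combined with the unique zero at $d_2$ forces $\lambda_1(\mu d_1,q_2,\bm w)\ge 0$---a contradiction. If $\mu=1$, then $q_2=q_1$ and $d_2<d_1$ (from $(d_2,q_2)\ne(d_1,q_1)$) furnish two distinct zeros of $\lambda_1(\cdot,q_1,\bm w)$, again violating uniqueness. The case $(d_2,q_2)\in G_2$ is handled symmetrically, using $\mu\le 1$ and $\lambda_1(\mu d_1,q_2,\bm w)>0$.
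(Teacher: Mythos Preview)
Your proof is correct and follows essentially the same approach as the paper: both establish $\bm{r-\hat u-\hat v}\gg\bm 0$ by the induction argument of Lemma~\ref{prioriu}, read off $\lambda_1(d_k,q_k,\bm{r-\hat u-\hat v})=0$ for $k=1,2$ via Perron--Frobenius, and then derive a contradiction by combining the scaling monotonicity of Lemma~\ref{theorem_quasi}(i) (for $A=d_1D+q_1Q$, which indeed has $s(A)<0$) with Lemma~\ref{consr2}. The only cosmetic difference is that the paper routes the final step through the curve $q^*_{\bm{r-\hat u-\hat v}}(d)$ and its unique intersection with the ray $q=\frac{q_1}{d_1}d$ (exactly as in Lemma~\ref{G}), whereas you apply Lemma~\ref{consr2} directly to conclude that $d\mapsto\lambda_1(d,q_2,\bm w)$ has at most one zero.
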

\begin{proof}
Let $(d_2,q_2)\in G_1\cup G_2$. Suppose to the contrary that model \eqref{pat-cp} admits a positive equilibrium $(\hat{\bm u},\hat{\bm v})$, where
$\hat{\bm u}=(\hat u_1,\dots,\hat u_n)\gg\bm 0$ and  $\hat{\bm v}=(\hat v_1,\dots,\hat v_n)\gg\bm 0$.
Then, we have
\begin{equation*}
\begin{split}
&(d_1+q_1)(\hat u_{n-1}-\hat u_n)=-\hat u_n(r-\hat u_n-\hat v_n),\\
&(d_1+q_1)(\hat u_{i-1}-\hat u_i)=d_1(\hat u_i-\hat u_{i+1})-\hat u_i(r-\hat u_i-\hat v_i),\;\;i=2,\dots,n-1,\\
&q_1\hat u_1=d_1(\hat u_{2}-\hat u_1)+\hat u_1(r-\hat u_1-\hat v_1).
\end{split}
\end{equation*}
and
\begin{equation*}
\begin{split}
&(d_2+q_2)(\hat v_{n-1}-\hat v_n)=-\hat v_n(r-\hat u_n-\hat v_n),\\
&(d_2+q_2)(\hat v_{i-1}-\hat v_i)=d_2(\hat v_i-\hat v_{i+1})-\hat v_i(r-\hat u_i-\hat v_i),\;\;i=2,\dots,n-1,\\
&q_2\hat v_1=d_2(\hat v_{2}-\hat v_1)+\hat v_1(r-\hat u_1-\hat v_1).
\end{split}
\end{equation*}
Then, using similar arguments as in the proof of Lemma \ref{prioriu}, we can show that
$\hat z_1<\hat z_2<\dots<\hat z_n$ for $z=u,v$ and $\hat u_i+\hat v_i<r$ for $1\le i\le n$. Therefore, $\bm{r-\hat u-\hat v}\gg \bm 0$. By Proposition \ref{effp},  function $q=q_{\bm{r-\hat u-\hat v}}^*(d)$ is well-defined for $d\in(0,\infty)$. Moreover, by Lemma \ref{qstr},  it is strictly increasing in $(0, \infty)$.

Noticing that $(\hat{\bm u},\hat{\bm v})$ is a positive equilibrium, we have $\la_1(d_1,q_1,\bm{r-\hat u-\hat v})=\la_1(d_2,q_2,\bm{r-\hat u-\hat v})=0$.
By virtue of similar arguments as in the proof of Lemma \ref{G}, the functions $q=q_{\bm{r-\hat u-\hat v}}^*(d)$ and $q=\ds\f{q_1}{d_1}d$ have exactly one intersection point $(d_1, q_1)$ in the first quadrant  of the $d-q$ plane. It follows that
$$
G_1\subset \{(d,q):q,d>0,\;q>q_{\bm{r-\hat u-\hat v}}^*(d)\}
$$
and
$$
G_2\subset \{(d,q):q,d>0,\;q<q_{\bm{r-\hat u-\hat v}}^*(d)\}.
$$
By Lemma \ref{prop-p}, we have $\la_1(d_2,q_2, \bm{r-\hat u-\hat v})<0$ for $(d_2,p_2)\in G_1$ and  $\la_1(d_2,q_2,\bm{r-\hat u-\hat v})>0$ for $(d_2,p_2)\in G_2$, which contradicts with $\la_1(d_2,q_2,\bm{r-\hat u-\hat v})=0$.
Therefore, model \eqref{pat-cp} has no positive equilibrium if $(d_2,q_2)\in G_1\cup G_2$.
\end{proof}

By virtue of Proposition \ref{iv1}, Lemma \ref{iv2} and the monotone dynamical system theory, we have the following main result about the global dynamics of model \eqref{pat-cp}:
\begin{theorem}\label{theorem_global}
Suppose that  \textbf{H}1 and \textbf{H}2 hold.
Then the following statements hold:
\begin{enumerate}
\item [{$\rm (i)$}] If $(d_2,q_2)\in G_1$, then $(\bm u^*,\bm 0)$ is globally asymptotically stable for  \eqref{pat-cp};
\item [{$\rm (ii)$}] If $(d_2,q_2)\in G_2$, then the semi-trivial
equilibrium $(\bm 0,\bm v^*)$ exists and is globally asymptotically stable for  \eqref{pat-cp}.
\end{enumerate}
\end{theorem}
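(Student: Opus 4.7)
The proof assembles three pieces already in hand: Proposition~\ref{iv1} identifies the local stability of $(\bm u^*, \bm 0)$ across $S_1 \cup S_2$, Lemma~\ref{iv2} rules out coexistence equilibria on $G_1 \cup G_2$, and the strictly monotone dynamical system trichotomy recalled at the end of Section~2 converts this boundary information into the full asymptotic picture. The inclusions $G_1 \subset S_1$ and $G_2 \subset S_2$ noted in \eqref{region1} ensure that Proposition~\ref{iv1} and Lemma~\ref{iv2} apply throughout the relevant region.

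For part (i), take $(d_2, q_2) \in G_1$. Then $(\bm u^*, \bm 0)$ is locally asymptotically stable by Proposition~\ref{iv1}. If $(\bm 0, \bm v^*)$ does not exist (i.e.\ $\la_1(d_2, q_2, \bm r) \le 0$), a direct comparison with the single-species $\bm v$-equation together with Lemma~\ref{DS-single} forces $\bm v(t) \to \bm 0$ and hence $\bm u(t) \to \bm u^*$. If $(\bm 0, \bm v^*)$ does exist and were also locally asymptotically stable, item~3 of the trichotomy would produce an unstable positive equilibrium, contradicting Lemma~\ref{iv2}; hence $(\bm 0, \bm v^*)$ must be unstable whenever it exists, and item~1 of the trichotomy delivers the global asymptotic stability of $(\bm u^*, \bm 0)$.

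For part (ii), take $(d_2, q_2) \in G_2$. Proposition~\ref{iv1} shows that $(\bm u^*, \bm 0)$ is unstable. The one step that is not bookkeeping is verifying that $(\bm 0, \bm v^*)$ exists. From Lemma~\ref{prioriu} we have $\bm 0 \ll \bm{r - u^*} < \bm r$, so Lemma~\ref{qstr}(iii) applied to these two positive growth-rate vectors gives
\begin{equation*}
q^*_{\bm r}(d_2) > q^*_{\bm{r - u^*}}(d_2) > q_2,
\end{equation*}
where the second inequality is just the defining condition of $S_2 \supset G_2$. This yields $\la_1(d_2, q_2, \bm r) > 0$, and Lemma~\ref{DS-single} produces $\bm v^* \gg \bm 0$. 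Combining the instability of $(\bm u^*, \bm 0)$, the existence of $(\bm 0, \bm v^*)$, and the absence of positive equilibria from Lemma~\ref{iv2}, item~1 of the trichotomy (with the roles of the two species interchanged) yields that $(\bm 0, \bm v^*)$ is globally asymptotically stable.

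The main obstacle is this existence argument for $\bm v^*$ in part (ii): $G_2$ is defined relative to the straight line $q = (q_1/d_1)d$ rather than the persistence curve $q = q^*_{\bm r}(d)$, so upgrading $(d_2, q_2) \in G_2 \subset S_2$ to the strictly stronger inequality $q_2 < q^*_{\bm r}(d_2)$ needed to put the $\bm v$-only problem into the persistence regime of Proposition~\ref{effp} is not tautological and crucially relies on the strict monotonicity of $q^*$ in the growth-rate vector established in Lemma~\ref{qstr}(iii). All remaining steps are direct applications of the trichotomy together with the results of Sections~2 and~3.
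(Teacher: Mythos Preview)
Your proof is correct and follows essentially the same approach as the paper's: both combine Proposition~\ref{iv1}, Lemma~\ref{iv2}, and the monotone dynamical system theory, and both obtain the existence of $\bm v^*$ in part~(ii) from the inequality $q^*_{\bm{r-u^*}}(d) < q^*_{\bm r}(d)$ supplied by Lemma~\ref{qstr}(iii). The only minor difference is in part~(i): the paper establishes the instability of $(\bm 0,\bm v^*)$ directly by the symmetry of the model (interchanging the roles of $(d_1,q_1)$ and $(d_2,q_2)$ places $(d_1,q_1)$ in the $G_2$-type region, so Proposition~\ref{iv1} applies with species swapped), whereas you argue by contradiction through item~3 of the trichotomy---which, strictly speaking, rules out local asymptotic stability of $E_2$ but not the borderline neutral case; this is harmless here since $E_1$ is linearly stable and there is no positive equilibrium, so the cited monotone theory already forces global stability of $E_1$.
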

\begin{proof}
{$\rm (i)$} Suppose $(d_2,q_2)\in G_1$. We claim that semi-trivial equilibrium $(\bm 0,\bm v^*)$ is either unstable or does not exist. Indeed, if $q_2<q_{\bm r}^*(d_2)$ then  $(\bm 0,\bm v^*)$ exists. Since the nonlinear terms of the model are symmetric and $(\bm u^*,\bm 0)$ is unstable when $(d_2,q_2)\in G_2$,  $(\bm 0,\bm v^*)$ is unstable when $(d_2,q_2)\in G_1$. If  $q_2\ge q_{\bm r}^*(d_2)$, then $(\bm 0,\bm v^*)$ does not exist. Since model \eqref{pat-cp} has no positive equilibrium for $(d_2,q_2)\in G_1$, by the monotone dynamical system theory \cite{hess,hsu1996competitive, LAM2016Munther,smith2008monotone},  $(\bm u^*,\bm 0)$ is globally asymptotically stable.

{$\rm (ii)$} By Lemma \ref{qstr}, we have $q_{\bm{r-u^*}}^*(d)<q_{\bm{r}}^*(d)$ for all $d>0$. So if  $(d_2,q_2)\in G_2\subset \{(d,q):q,d>0,\;q<q_{\bm{r}}^*(d)\}$, semi-trivial equilibrium $(\bm 0,\bm v^*)$ exists. Since $(\bm u^*,\bm 0)$ is unstable and model \eqref{pat-cp} has no positive equilibrium for $(d_2,q_2)\in G_2$, the desired result follows from the monotone dynamical system theory  \cite{hess,hsu1996competitive, LAM2016Munther,smith2008monotone}.
\end{proof}

\begin{remark}
Theorem \ref{theorem_global} is illustrated in Figure \ref{FigA}. We are able to completely understand the global dynamics of model \eqref{pat-cp} in the colored regions ($G_1$ and $G_2$), in which the competitive exclusion happens.
\end{remark}

We have the following observations from Theorem \ref{theorem_global}:
\begin{corollary}\label{corollary_global}
Suppose that  \textbf{H}1 and \textbf{H}2 hold.
Then the following statements hold:
\begin{enumerate}
\item [{$\rm (i)$}] Fix $q_2=q_1$. If $d_2<d_1$,  $(\bm u^*,\bm 0)$ is globally asymptotically stable for  \eqref{pat-cp}; and if $d_2>d_1$, $(\bm 0,\bm v^*)$ is globally asymptotically stable;
\item [{$\rm (ii)$}] Fix $d_2=d_1$. If $q_2>q_1$,  $(\bm u^*,\bm 0)$ is globally asymptotically stable for \eqref{pat-cp}; and if $q_2<q_1$, $(\bm 0,\bm v^*)$ is globally asymptotically stable.
\end{enumerate}
\end{corollary}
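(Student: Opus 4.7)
The plan is to observe that Corollary \ref{corollary_global} is an immediate geometric consequence of Theorem \ref{theorem_global}: I only need to check that the two parameter choices $(d_2,q_2)=(d_2,q_1)$ and $(d_2,q_2)=(d_1,q_2)$ fall into the correct region $G_1$ or $G_2$, and then the conclusion is read off from parts (i) and (ii) of that theorem. No new eigenvalue or equilibrium analysis is required.

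For statement (i), I would fix $q_2=q_1$ and examine the line segment $\{(d,q_1):d>0\}$ relative to the ray $q=\frac{q_1}{d_1}d$. At $q=q_1$, this ray intersects the horizontal line at $d=d_1$. Hence for $d_2<d_1$ one has $d_2<d_1=\frac{d_1}{q_1}q_1$, together with $q_2=q_1\ge q_1$ and $(d_2,q_1)\ne(d_1,q_1)$, so $(d_2,q_1)\in G_1$; Theorem \ref{theorem_global}(i) then gives global asymptotic stability of $(\bm u^*,\bm 0)$. Symmetrically, for $d_2>d_1$ the inequality $d_2>d_1=\frac{d_1}{q_1}q_1$ with $q_2=q_1\le q_1$ places $(d_2,q_1)\in G_2$, and Theorem \ref{theorem_global}(ii) yields global asymptotic stability of $(\bm 0,\bm v^*)$ (which exists by the theorem).

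For statement (ii), I would fix $d_2=d_1$ and perform the analogous check on the vertical segment $\{(d_1,q):q>0\}$. If $q_2>q_1$, then $\frac{d_1}{q_1}q_2>d_1$, so $d_1\le\frac{d_1}{q_1}q_2$ with $q_2\ge q_1$ and the point distinct from $(d_1,q_1)$; hence $(d_1,q_2)\in G_1$, and Theorem \ref{theorem_global}(i) applies. If $q_2<q_1$, then $\frac{d_1}{q_1}q_2<d_1$, so $d_1\ge\frac{d_1}{q_1}q_2$ with $q_2\le q_1$; hence $(d_1,q_2)\in G_2$, and Theorem \ref{theorem_global}(ii) gives the stability of $(\bm 0,\bm v^*)$.

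There is essentially no obstacle: the entire content of the corollary is packaged in the explicit descriptions of $G_1$ and $G_2$. The only mildly delicate point is making sure that the degenerate boundary case is excluded correctly (i.e.\ the hypothesis in each of (i) and (ii) uses strict inequalities $d_2\lessgtr d_1$ or $q_2\lessgtr q_1$, which precisely avoids the point $(d_1,q_1)$ removed from both $G_1$ and $G_2$), so one does not accidentally apply Theorem \ref{theorem_global} at a point where it does not cover. Once this is noted, the proof is a one-line invocation of Theorem \ref{theorem_global} in each of the four cases.
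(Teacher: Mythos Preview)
Your proposal is correct and matches the paper's approach: the paper presents Corollary \ref{corollary_global} as an immediate ``observation'' from Theorem \ref{theorem_global} without a separate proof, and your argument simply makes explicit the trivial verification that each of the four parameter choices lies in $G_1$ or $G_2$.
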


\begin{remark}
By Corollary \ref{corollary_global}, the species with a larger diffusion rate or a smaller advection rate can invade and  replace the resident species.
Moreover, Corollary \ref{corollary_global} {$\rm (i)$}  resolves a conjecture in \cite{lou2019ideal}, which was originally proposed for a two-patch model.
\end{remark}

\begin{figure}
 \centering
  \includegraphics[width=0.6\textwidth]{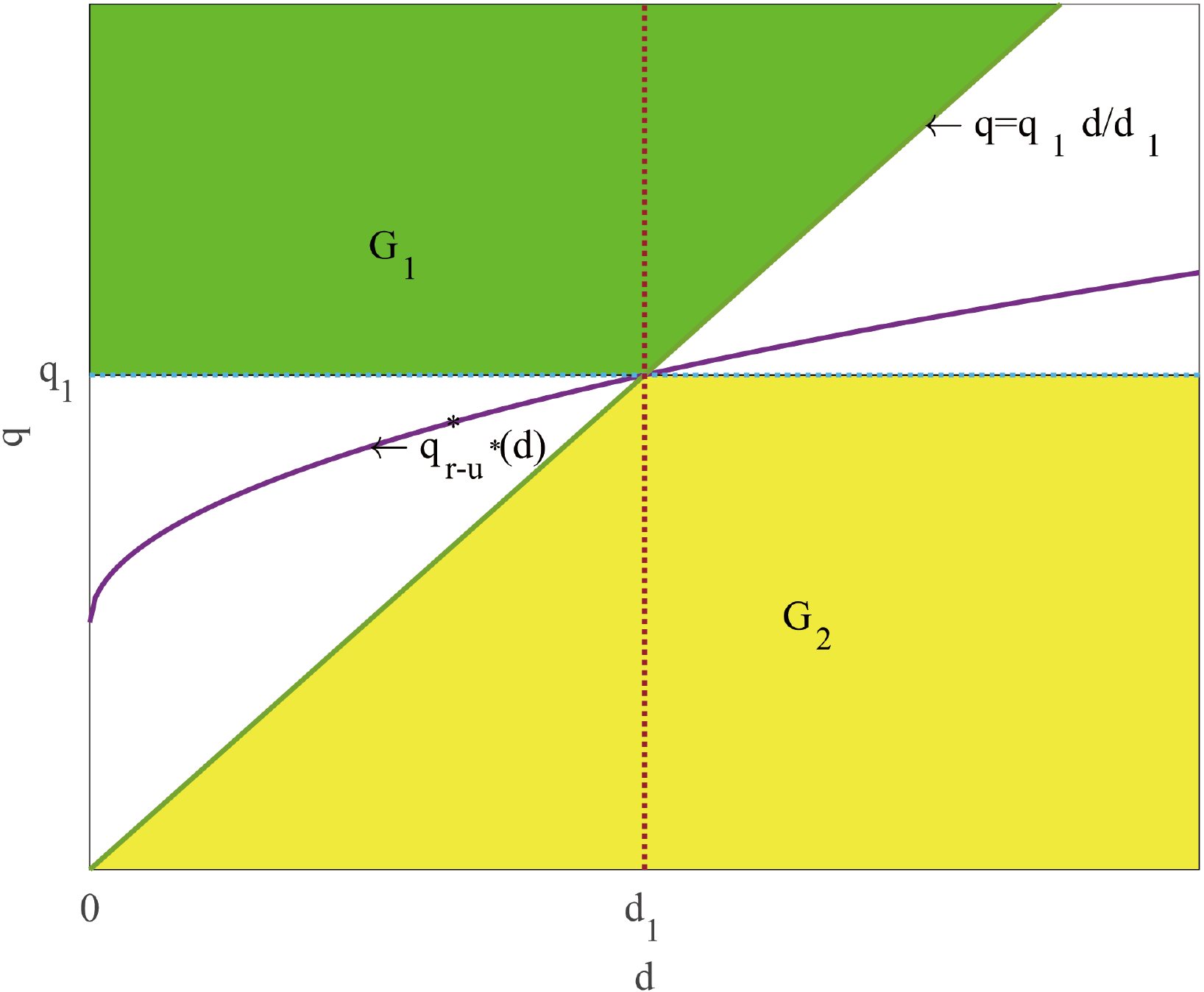}
 \caption{Illustration of Proposition \ref{iv1} and Theorem \ref{theorem_global} for model \eqref{pat-cp}-case (a). Here, $d_1$ and $q_1$ are fixed and satisfy \textbf{H}2 such that  $(\bm u^*,\bm 0)$ exists. If $(d_2, q_2)$ is above the curve $q=q_{\bm{r-u^*}}^*(d)$, then  $(\bm u^*,\bm 0)$ is stable; if $(d_2, q_2)$ is under the curve, then $(\bm u^*,\bm 0)$ is unstable.
 If $(d_2, q_2)\in G_1$,   $(\bm u^*,\bm 0)$ is globally asymptotically stable; and if $(d_2, q_2)\in G_2$, $(\bm 0,\bm v^*)$ exists and is globally asymptotically stable. }
 \label{FigA}
 \end{figure}

To study the dynamics of model \eqref{pat-cp} with $(d_2,q_2)$ in the region other than $G_1,G_2$ in Fig. \ref{FigA}, we first  show that there exists a curve which determines the stability of the semi-trivial equilibrium $(\bm 0, \bm v^*)$. We recall that $(\bm 0, \bm v^*)$ exists if and only if $0<q_2<q_{\bm r}^*(d_2)$.
\begin{proposition}\label{q2pr}
Suppose that  \textbf{H}1 and \textbf{H}2 hold. Then the following statements hold:
\begin{enumerate}
    \item [{$\rm (i)$}] For any $d_2>0$, $\la_1(d_1,q_1,\bm{r-v^*}(d_2,q_2))$ is strictly increasing in $q_2$ for $q_2\in(0, q_{\bm r}^*(d_2))$;
    \item [{$\rm (ii)$}] For any $d_2>0$, there exists a unique $q_{2}^*(d_2)\in (0, q_{\bm r}^*(d_2))$ such that
    \begin{equation}\label{q20}
      \la_1(d_1,q_1,\bm{r-v^*}(d_2,q^*_{2}(d_2)))=0;
    \end{equation}
Moreover, $q_2^*(d_1)=q_1$, and
\begin{equation}\label{llin}
\begin{split}
&q_1<q_2^*(d_2)<\min\left\{\ds\f{q_1}{d_1}d_2,q_{\bm r}^*(d_2)\right\} \;\;\text{for}\;\; d_2>d_1,\\
&\ds\f{q_1}{d_1}d_2<q_2^*(d_2)<\min\left\{q_1,q_{\bm r}^*(d_2)\right\}\;\;\text{for}\;\; d_2<d_1.
\end{split}
\end{equation}
\item [{$\rm (iii)$}] Semi-trivial equilibrium $(\bm 0,{\bm v}^*(d_2,q_2))$ is stable if
$0<q_2<q_2^*(d_2)$ and unstable if $q_2^*(d_2)<q_2<q_{\bm r}^*(d_2)$.
\end{enumerate}
\end{proposition}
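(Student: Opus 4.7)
The plan is to set $F(q_2) := \la_1(d_1, q_1, \bm r - \bm v^*(d_2, q_2))$ on $(0, q_{\bm r}^*(d_2))$ and study its sign, since $F(q_2) < 0$ (resp.\ $>0$) is exactly the linearized stability (resp.\ instability) condition for $(\bm 0, \bm v^*)$; hence (iii) follows automatically from (i) and (ii).

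For (i), I would differentiate the defining equation of $\bm v^*$ with respect to $q_2$. Setting $\bm w = \partial \bm v^*/\partial q_2$ yields the linear system $M \bm w = -Q \bm v^*$ with $M = d_2 D + q_2 Q + \text{diag}(r - 2 v_i^*)$. By the argument used for Lemma \ref{prioriu} applied to $\bm v^*$, one has $0 < v_1^* < \cdots < v_n^* < r$, so $-Q\bm v^*$ has strictly positive entries. Moreover $s(M) = \la_1(d_2, q_2, \bm r - 2\bm v^*) < \la_1(d_2, q_2, \bm r - \bm v^*) = 0$ by strict Perron-Frobenius monotonicity in the diagonal, so $-M$ is an irreducible nonsingular M-matrix with entrywise-positive inverse. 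Hence $\bm w \ll \bm 0$ componentwise, so $\bm r - \bm v^*$ increases strictly coordinatewise in $q_2$; and since $\la_1(d_1, q_1, \cdot)$ is strictly increasing in each coordinate, $F$ is strictly increasing.

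For (ii), I first check the boundary values: as $q_2 \to q_{\bm r}^*(d_2)^-$, $\bm v^* \to \bm 0$ and $F \to \la_1(d_1, q_1, \bm r) > 0$ by \textbf{H}2; as $q_2 \to 0^+$, the zero-row-sum property of $D$ in case (a) forces $\bm v^*(d_2, 0) = (r, \dots, r)^T$, so $F(0^+) = s(d_1 D + q_1 Q) = \la_1(d_1, q_1, \bm r) - r < 0$ by Lemma \ref{prop-p}. Combined with (i), the intermediate value theorem yields a unique $q_2^*(d_2) \in (0, q_{\bm r}^*(d_2))$, and $q_2^*(d_1) = q_1$ holds because $\bm v^*(d_1, q_1) = \bm u^*$ makes $F(q_1) = \la_1(d_1, q_1, \bm r - \bm u^*) = 0$. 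The separation $q_2^*(d_2) \neq q_1$ off the diagonal comes from Theorem \ref{theorem_global}: for $d_2 > d_1$ the point $(d_2, q_1)$ lies in $G_2$, so $(\bm 0, \bm v^*)$ is in particular linearly stable, giving $F(q_1) < 0$ and hence $q_2^*(d_2) > q_1$; the case $d_2 < d_1$ uses $G_1$ symmetrically. For the ray bound, setting $\mu = d_2/d_1$ and inserting $\bm u^*$ into the equation for $\bm v^*(\mu d_1, \mu q_1)$, the equation satisfied by $\bm u^*$ gives
\begin{equation*}
\mu d_1 D \bm u^* + \mu q_1 Q \bm u^* + \bm u^*(r - \bm u^*) = (1 - \mu)\, \bm u^*(r - \bm u^*),
\end{equation*}
so $\bm u^*$ is a strict supersolution (resp.\ subsolution) for $\mu > 1$ (resp.\ $\mu < 1$). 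A monotone dynamical systems or M-matrix argument then gives $\bm v^*(\mu d_1, \mu q_1) \ll \bm u^*$ for $\mu > 1$ and $\gg \bm u^*$ for $\mu < 1$; applying $\la_1(d_1, q_1, \cdot)$ yields $F((q_1/d_1) d_2) > 0$ for $d_2 > d_1$ and $< 0$ for $d_2 < d_1$, completing the two-sided bounds.

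The main obstacle is the strict componentwise monotonicity $\partial v_i^*/\partial q_2 < 0$ in (i): it requires both the M-matrix inversion and the strict ordering of the $v_i^*$ so that the forcing $-Q\bm v^*$ is strictly positive. The sub/supersolution comparisons in (ii), the boundary computations, and the intermediate value argument are routine once (i) and the standard coordinatewise monotonicity of $\la_1$ in the growth rate are in hand. Part (iii) then follows immediately, since $F(q_2) < 0$ exactly when $q_2 < q_2^*(d_2)$.
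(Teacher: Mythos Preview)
Your proof is correct and follows essentially the same approach as the paper. Part~(i) is identical: differentiate the equilibrium equation, invoke the M-matrix inverse via $s(M)<0$, and use the strict ordering $v_1^*<\cdots<v_n^*$ from Lemma~\ref{prioriu} to make the forcing $-Q\bm v^*$ strictly positive. Part~(iii) is immediate in both.

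For part~(ii) there are two small differences worth noting. First, the paper does not compute the $q_2\to 0^+$ limit; instead it obtains both sign conditions directly from Theorem~\ref{theorem_global} (stability of $(\bm 0,\bm v^*)$ on $G_2$ and instability on $G_1$, via the symmetry of the problem), together with the same $q_2\to q_{\bm r}^*(d_2)^-$ limit you use. Your $F(0^+)$ computation is a pleasant extra check but is not needed. Second, for the ray bound $q_2^*(d_2)\lessgtr (q_1/d_1)d_2$ the paper again cites Theorem~\ref{theorem_global}, whereas you give a direct sub/supersolution comparison $\bm v^*(\mu d_1,\mu q_1)\lessgtr \bm u^*$; this is a valid and slightly more self-contained alternative, but it amounts to re-proving the piece of Theorem~\ref{theorem_global} that the paper invokes. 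One caveat you should make explicit: the comparison at $q_2=(q_1/d_1)d_2$ only applies when that value lies below $q_{\bm r}^*(d_2)$ (so that $\bm v^*$ exists there); otherwise the $\min$ in \eqref{llin} equals $q_{\bm r}^*(d_2)$ and the bound is automatic.
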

\begin{proof}
{$\rm (i)$} Clearly, $v^*(d_2,q_2)$ satisfies
\begin{equation}\label{semitv2}
\ds\sum_{j=1}^{n}(d_2D_{ij}+q_2Q_{ij})v_j+v_i(r-v_i)=0, \;\;i=1,\dots,n,
\end{equation}
Differentiating \eqref{semitv2} with respect to $q_2$ yields
\begin{equation}\label{derivsemitv2}
-\ds\sum_{j=1}^{n}(d_2D_{ij}+q_2Q_{ij})v'_j-(r-2v_i)v_i'=\sum_{j=1}^n Q_{ij}v_j , \;\;i=1,\dots,n,
\end{equation}
Denote $L=d_2D+q_2Q+\text{diag}(r-2v_i)$, and we have
$s(L)<s(d_2D+q_2Q+\text{diag}(r-v_i))=0$. So $-L$ is a non-singular $M$-matrix and $-L^{-1}$ is a positive matrix \cite{berman1994nonnegative}. By
Lemma \ref{prioriu}, we have $v_1<\dots<v_n$. Therefore, the right hand side of \eqref{derivsemitv2} is negative. This implies that $v_i'<0$ for $i=1,..., n$ and each entry of $v^*(d_2,q_2)$ is strictly decreasing in $q_2$ for $q\in(0,q_{\bm r}^*(d_2))$. Then, it follows from Lemma \ref{prop-p} that {$\rm (i)$} holds.

{$\rm (ii)$}  We only consider the case  $0< d_2<d_1$, since the other case $d_2>d_1$ is similar. It follows from Theorem
\ref{theorem_global} that $(\bm 0,\bm v^*)$ is stable for $q_2\le \ds\f{q_1}{d_1}d_2$ and unstable for $q_2\ge q_1$.
Hence, we have
$\la_1(d_1,q_1,\bm{r-v^*}(d_2,q_{2}))< 0$ for $q_2\le \ds\f{q_1}{d_1}d_2$ and $\la_1(d_1,q_1,\bm{r-v^*}(d_2,q_{2}))> 0$ for $q_2\ge q_1$.
Note that
$$
\ds\lim_{q_2\to q^*_2(d_2)^-}\la_1(d_1,q_1,\bm{r-v^*}(d_2,q_{2}))=\la_1(d_1,q_1,\bm r)>0.
$$
Since $\la_1(d_1,q_1,\bm{r-v^*}(d_2,q_2))$ is strictly increasing in $q_2$, there exists unique $q_2^*(d_2)$ satisfying \eqref{q20}-\eqref{llin}.
Clearly, $q_2^*(d_1)=q_1$, and {$\rm (iii)$} follows from {$\rm (i)$}-{$\rm (ii)$}. This completes the proof.
\end{proof}

By Propositions \ref{q1pr} and \ref{q2pr} and the monotone dynamical system theory, we have the following result.
\begin{theorem}\label{theorem_bc}
Suppose that  \textbf{H}1 and \textbf{H}2 hold. Let $q_2^*(d_2)$ be defined in Proposition \ref{q2pr}. Then the following statements hold:
\begin{enumerate}
\item[{$\rm (i)$}] If $q_2^*(d_2)<q^*_{\bm {r-u^*}}(d_2)$, then for any $q_2\in \left(q_2^*(d_2),q^*_{\bm {r-u^*}}(d_2)\right)$ both $(\bm u^*,\bm 0)$ and $(\bm 0, \bm v^*)$ are  unstable, and system \eqref{pat-cp} admits a stable positive equilibrium;
\item [{$\rm (ii)$}] If $q_2^*(d_2)>q^*_{\bm {r-u^*}}(d_2)$ then for any $q_2\in \left(q^*_{\bm {r-u^*}}(d_2), q_2^*(d_2)\right)$ both $(\bm u^*,\bm 0)$ and $(\bm 0, \bm v^*)$ are stable, and system \eqref{pat-cp} admits an unstable positive equilibrium.
\end{enumerate}
\end{theorem}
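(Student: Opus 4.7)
The plan is to reduce each assertion to the stability conclusions of Propositions \ref{iv1} and \ref{q2pr}, and then to invoke the monotone dynamical system framework summarized at the end of Section 2 to produce the positive equilibrium of the asserted stability type.

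First I would verify that each $q_2$-interval in the statement is nonempty and lies inside $(0, q^*_{\bm r}(d_2))$, so that the $\bm v$-only equilibrium $(\bm 0, \bm v^*(d_2, q_2))$ exists along it. In (i), the hypothesis $q_2^*(d_2) < q^*_{\bm{r-u^*}}(d_2)$ makes the interval nonempty directly; since $\bm{r-u^*} \ll \bm r$ by Lemma \ref{prioriu}, Lemma \ref{qstr}(iii) gives $q^*_{\bm{r-u^*}}(d_2) < q^*_{\bm r}(d_2)$, so the interval lies below $q^*_{\bm r}(d_2)$. In (ii), Proposition \ref{q2pr}(ii) already places $q_2^*(d_2)$ in $(0, q^*_{\bm r}(d_2))$, so the hypothesis $q^*_{\bm{r-u^*}}(d_2) < q_2^*(d_2)$ again gives a nonempty interval contained in $(0, q^*_{\bm r}(d_2))$.

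Next I would read off the stability of the two semi-trivial equilibria from the existing results. For (i), $q_2 < q^*_{\bm{r-u^*}}(d_2)$ together with Proposition \ref{iv1}(ii) gives that $(\bm u^*, \bm 0)$ is unstable, while $q_2^*(d_2) < q_2 < q^*_{\bm r}(d_2)$ together with Proposition \ref{q2pr}(iii) gives that $(\bm 0, \bm v^*)$ is unstable. For (ii), the inequalities flip: $q_2 > q^*_{\bm{r-u^*}}(d_2)$ yields stability of $(\bm u^*, \bm 0)$ via Proposition \ref{iv1}(i), and $0 < q_2 < q_2^*(d_2)$ yields stability of $(\bm 0, \bm v^*)$ via Proposition \ref{q2pr}(iii).

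To finish, I would invoke the strictly monotone dynamical system generated by \eqref{pat-cp} on $X = \mathbb{R}_+^n \times \mathbb{R}_+^n$ with the competitive cone order $\le_K$, as summarized in the preliminaries. Item (2) of that dichotomy yields, in case (i), at least one stable positive equilibrium since both semi-trivial equilibria are unstable; item (3) yields, in case (ii), at least one unstable positive equilibrium since both semi-trivial equilibria are locally asymptotically stable. The only real obstacle is bookkeeping: one must keep the relative order of the thresholds $q_2^*(d_2)$, $q^*_{\bm{r-u^*}}(d_2)$, and $q^*_{\bm r}(d_2)$ straight so that $(\bm 0, \bm v^*)$ is known to exist on the asserted $q_2$-interval before Proposition \ref{q2pr}(iii) is applied to determine its stability; beyond that, no new analytic estimate is needed.
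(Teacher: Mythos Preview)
Your proposal is correct and follows essentially the same route as the paper, which simply states that the result follows from Propositions \ref{q1pr}, \ref{q2pr} and the monotone dynamical system theory without spelling out the details. You have supplied precisely the bookkeeping the paper leaves implicit: verifying that the $q_2$-interval lies in $(0,q^*_{\bm r}(d_2))$ so that $(\bm 0,\bm v^*)$ exists, reading off the stability of each semi-trivial equilibrium from Propositions \ref{iv1} and \ref{q2pr}(iii), and then invoking items (2) and (3) of the monotone dichotomy from Section~2.
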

\begin{remark}
Since $\la_1(d_1,q_1,\bm{r-v^*}(d_2,q_2))$ is strictly increasing in $q_2$, the sign of $q_2^*(d_2)-q^*_{\bm {r-u^*}}(d_2)$ is determined by the sign of
\begin{equation}\label{la1la}
\la^*_1(d_2):=\la_1\left(d_1,q_1,\bm{r-v^*}(d_2,q^*_{\bm {r-u^*}}(d_2))\right).
\end{equation}
If $\la^*_1(d_2)>(<)0$, then $q_2^*(d_2)<(>)q^*_{\bm {r-u^*}}(d_2)$.
\end{remark}

\begin{remark}
Some of our results were known for the corresponding reaction-diffusion models, a similar result of Lemma \ref{prioriu} was obtained in \cite{vasilyeva2011population}, a similar result of Corollary \ref{corollary_global} (\rm i) was presented in \cite{lou2014evolution}, and similar results of Lemma \ref{iv2} and Theorem \ref{theorem_global} were proved in \cite{zhou2018global} using a more sophisticated method. We used the monotonicity property of $\bm u^*$ in $q$ in the proof of Proposition \ref{q2pr}, which was also known for reaction-diffusion models \cite{vasilyeva2011population}. Similar results of Proposition \ref{q2pr} and Theorem \ref{theorem_bc} were proved in \cite{LouNie2018}
with respect to some other parameters.
To our best knowledge, similar results of Propositions \ref{q1pr}, \ref{iv1} and Lemma \ref{G} were not proved for reaction-diffusion models.
\end{remark}



\subsection{Invasion analysis for case (b)}

In this subsection, we suppose that \textbf{H}1$^*$ holds. By Lemma \ref{theorem_quasi}, there exists a unique $d^*>0$ such that $\la_1(d^*, 0, \bm r)=0$. By  Theorem \ref{qb},  for any $d_1\in (0, d^*)$ there exists  $q_{\bm r}^*(d_1)>0$ such that $\la_1(d_1, q_1, \bm r)>0$ for $q_1<q_{\bm r}^*(d_1)$ and $\la_1(d_1, q_1, \bm r)<0$ for $q_1>q_{\bm r}^*(d_1)$.
We suppose that species \textbf{u} is established,  i.e.,
\begin{enumerate}
\item[\textbf{H}2$^*$.] $0<d_1<d^*$ and $q_1<q_{\bm r}^*(d_1)$.
\end{enumerate}
Under assumptions \textbf{H}1$^*$ and \textbf{H}2$^*$, model \eqref{pat-cp} admits a unique species $\bm u-$only semi-trivial equilibrium  $(\bm u^*,\bm 0)$. 

We first prove an estimate of $\bm{u^*}$, which will be useful later.
\begin{lemma}\label{prioriu2}
Suppose that  \textbf{H}1$^*$ and \textbf{H}2$^*$ hold. Let $\bm u^*=(u^*_1,\dots, u^*_n)^T$ be the unique positive solution of \eqref{semitu}.
Then, we have $\bm 0\ll\bm{u^*}\ll r$.
\end{lemma}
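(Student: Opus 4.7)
The positivity $\bm u^* \gg \bm 0$ is immediate from the existence theory: assumption \textbf{H}2$^*$ forces $\la_1(d_1, q_1, \bm r) > 0$ via Proposition~\ref{qb}, whence Lemma~\ref{DS-single} produces the unique strictly positive equilibrium $\bm u^*$. The substantive task is the upper bound $u_i^* < r$ for every $i$, which I plan to obtain by a discrete maximum principle applied directly to \eqref{semitu}.

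The key structural input is the row-sum pattern of $L := d_1 D + q_1 Q$ in case (b): interior rows sum to $0$, row $1$ sums to $-q_1 \le 0$, and row $n$ sums to $-d_1 < 0$, with the strict negativity at row $n$ coming from the hostile-boundary term $D_{nn} = -2$. Suppose for contradiction that $M := \max_i u_i^* \ge r$, and pick $k$ with $u_k^* = M$. Evaluating \eqref{semitu} at $i = k$ gives $\sum_j L_{kj} u_j^* = M(M-r) \ge 0$, while $L_{kj} \ge 0$ for $j \ne k$ combined with $u_j^* \le M$ yields
\[
0 \;\le\; \sum_j L_{kj} u_j^* \;\le\; M \sum_j L_{kj} \;\le\; 0.
\]
Collapsing to equality forces $M = r$, forces the row sum of $L$ at $k$ to vanish (hence $k \ne n$, and also $k \ne 1$ when $q_1 > 0$), and forces $u_j^* = M$ whenever $L_{kj} > 0$.

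From there I will propagate the equality along the nearest-neighbor chain through the positive off-diagonal entries $L_{k, k \pm 1}$, iterating the same maximum-principle argument at each newly identified maximizer to conclude that $u_i^* = M = r$ for every $i \in \{1, \dots, n\}$. Substituting the constant profile into \eqref{semitu} at $i = n$ gives $\sum_j L_{nj} u_j^* = r \cdot (-d_1) = -d_1 r$, which must equal $u_n^*(u_n^* - r) = 0$; since $d_1, r > 0$ this is a contradiction, and therefore $M < r$. The main obstacle I anticipate is cleanly executing the propagation at the upstream endpoint $i = 1$ when $q_1 = 0$ (which must be handled, since then the row-$1$ sum also vanishes and the maximum can legitimately be attained there) and articulating why case (b) succeeds where a naive adaptation of Lemma~\ref{prioriu} would stall: it is precisely the hostile downstream boundary that supplies the strict negativity $-d_1$ at row $n$ and thereby rules out the constant equilibrium $u_i^* \equiv r$ which would otherwise solve the linear part of the system.
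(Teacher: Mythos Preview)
Your argument is correct. The maximum-principle propagation works exactly as you describe: once $u_k^* = M \ge r$ at some $k$, the sandwich $0 \le \sum_j L_{kj} u_j^* \le M \sum_j L_{kj} \le 0$ forces $M = r$, forces $k \notin \{n\}$ (and $k \neq 1$ when $q_1 > 0$), and forces every neighbor with $L_{kj} > 0$ to also attain $M$. Iterating rightward along the chain you eventually hit $u_n^* = r$, at which point the row-$n$ sum $-d_1 < 0$ gives the contradiction. (A minor efficiency: you do not actually need the full constant profile---the moment the propagation reaches $i = n$, the inequality $0 = u_n^*(u_n^*-r) = \sum_j L_{nj} u_j^* \le r \sum_j L_{nj} = -d_1 r$ already contradicts, so the leftward propagation and the $q_1 = 0$ endpoint discussion can be dispensed with.)

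The paper takes a different and shorter route: it simply observes that the constant vector $\hat{\bm u} = (r,\dots,r)$ is an upper solution of \eqref{semitu}, because $\sum_j L_{ij} r + r(r-r) = r \sum_j L_{ij} \le 0$ for every $i$ (with strict inequality at $i = n$), and then invokes the standard upper/lower solution machinery together with uniqueness of the positive equilibrium to conclude $\bm 0 \ll \bm u^* \ll \bm r$. Your approach is more elementary and self-contained---it avoids appealing to an external comparison theorem and makes the role of the hostile-boundary term $D_{nn} = -2$ completely explicit---at the cost of being longer. The paper's approach is slicker but leaves the passage from $\bm u^* \le \bm r$ to the strict inequality $\bm u^* \ll \bm r$ implicit in the machinery. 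Both rest on the same structural fact you identified: the strictly negative row sum at $i = n$ is what rules out the constant profile $\bm u^* \equiv r$.
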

\begin{proof}
Let $\hat {\bm u}=\bm r$. It is easy to check that $\hat {\bm u}$ is an upper solution of \eqref{semitu}. By the method of upper and lower solutions and the uniqueness of the positive solution of  \eqref{semitu}, we have $\bm 0\ll\bm {u^*}\ll\bm r$.
\end{proof}

By Lemma  \ref{theorem_quasi}, there exists a unique $d^{**}>0$ such that $\la_1(d^{**}, 0, \bm{r- u^*})=0$. By  Proposition \ref{qb},  for any $d_2\in (0, d^{**})$ there exists  $q_{\bm{r- u^*}}^*(d_2)>0$ such that $\la_1(d_2, q_2, \bm{r- u^*})>0$ for $q_2<q_{\bm{r- u^*}}^*(d_2)$ and $\la_1(d_2, q_2, \bm{r- u^*})<0$ for $q_2>q_{\bm{r- u^*}}^*(d_2)$. Moreover, if $d_2\ge d^{**}$, then $\la_1(d_2, q_2, \bm{r- u^*})\le 0$ for all $q_2>0$.
This suggests us to define the following parameter sets:
\begin{equation*}
\begin{split}
    S_1^*:=&\{(d,q): \;q>q_{\bm{r-u^*}}^*(d), \; 0<d<d^{**}\}\cup \{(d,q):q>0, \; d\ge d^{**}\},\\
    S_2^*:=&\{(d,q): 0<q<q_{\bm{r-u^*}}^*(d), \; 0<d<d^{**}\}.
\end{split}
\end{equation*}

We have the following result about the local stability of  semi-trivial equilibrium $(\bm u^*,\bm 0)$ of model \eqref{pat-cp}, and its proof is omitted as it is similar to  Proposition \ref{iv1}.
\begin{proposition}\label{theorem_iv2}
Suppose that  \textbf{H}1$^*$ and \textbf{H}2$^*$ hold. The the following statements about semi-trivial equilibrium $(\bm u^*, \bm 0)$ of \eqref{pat-cp} hold:
\begin{enumerate}
\item [{$\rm (i)$}] If $(d_2, q_2)\in S_1^*$, then $(\bm u^*,\bm 0)$ is locally asymptotically stable;
\item [{$\rm (ii)$}] If $(d_2, q_2)\in S_2^*$, then $(\bm u^*,\bm 0)$ is unstable.
\end{enumerate}
\end{proposition}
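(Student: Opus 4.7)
The plan is to mirror the argument used to prove Proposition~\ref{iv1}, with the only substantive modification coming from the fact that $s(D)<0$ under \textbf{H}1$^*$ rather than $s(D)=0$ under \textbf{H}1. As in the case (a) analysis, I will first linearize \eqref{pat-cp} at the semi-trivial equilibrium $(\bm u^*,\bm 0)$. The Jacobian matrix is block upper triangular: the upper-left block governs the $\bm u$-dynamics and has all eigenvalues with negative real part (since $(\bm u^*,\bm 0)$ is stable within the $\bm u$-subsystem by Lemma~\ref{DS-single}), while the lower-right block is exactly $d_2 D + q_2 Q + \mathrm{diag}(r - u_i^*)$. Hence the local stability of $(\bm u^*,\bm 0)$ is governed by the sign of the principal eigenvalue $\la_1(d_2, q_2, \bm{r - u^*})$: stability when this eigenvalue is negative, instability when positive.

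The next step is to invoke Lemma~\ref{prioriu2}, which gives $\bm 0\ll \bm{u^*}\ll \bm r$ and hence $\bm{r-u^*}\gg \bm 0$. This justifies applying Proposition~\ref{qb} with the positive vector $\bm{r-u^*}$ in place of $\bm r$: there is a unique threshold $d^{**}>0$ satisfying $\la_1(d^{**},0,\bm{r-u^*})=0$, and for every $d\in(0,d^{**})$ there is a unique $q^*_{\bm{r-u^*}}(d)>0$ with $\la_1(d,q^*_{\bm{r-u^*}}(d),\bm{r-u^*})=0$. These are the ingredients used to define $S_1^*$ and $S_2^*$ in the statement.

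Now I would finish by combining the strict monotonicity of $\la_1$ in $q$ (Lemma~\ref{prop-p}) with these threshold values. For $(d_2, q_2)\in S_2^*$, we have $0<q_2<q^*_{\bm{r-u^*}}(d_2)$ with $d_2 < d^{**}$, so strict monotonicity in $q$ gives $\la_1(d_2,q_2,\bm{r-u^*})>\la_1(d_2,q^*_{\bm{r-u^*}}(d_2),\bm{r-u^*})=0$, yielding instability. For $(d_2,q_2)\in S_1^*$, I split into two subcases. If $0<d_2<d^{**}$ and $q_2>q^*_{\bm{r-u^*}}(d_2)$, then again strict monotonicity in $q$ yields $\la_1(d_2,q_2,\bm{r-u^*})<0$. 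If instead $d_2\ge d^{**}$, then by Proposition~\ref{qb} (applied to $\bm{r-u^*}$) one has $\la_1(d_2,0,\bm{r-u^*})\le 0$; combined with the \emph{strict} decrease of $\la_1$ in $q$ on $[0,\infty)$ and the assumption $q_2>0$, this forces $\la_1(d_2,q_2,\bm{r-u^*})<0$, proving stability.

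I do not anticipate a real obstacle here, since every analytical tool used in Proposition~\ref{iv1} transfers directly once Lemma~\ref{prioriu2}, Lemma~\ref{prop-p}, and Proposition~\ref{qb} are in hand. The only point requiring care is treating the ``extra'' half-strip $\{d\ge d^{**}\}$ in $S_1^*$, which has no analogue in case~(a): one must use the fact that $\la_1(d,0,\bm{r-u^*})\le 0$ for $d\ge d^{**}$ together with the strict (not merely weak) decrease of $\la_1$ in $q$ to still get a strict negative sign for $q_2>0$ and conclude local asymptotic stability of $(\bm u^*,\bm 0)$.
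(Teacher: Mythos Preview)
Your proposal is correct and follows exactly the approach the paper intends: the paper omits the proof entirely, stating only that it is similar to Proposition~\ref{iv1}. You have supplied precisely that argument, including the one extra detail the case~(b) setting requires---handling the half-strip $\{d_2\ge d^{**}\}$ via Proposition~\ref{qb} and the strict monotonicity of $\la_1$ in $q$ from Lemma~\ref{prop-p}---which the paper implicitly leaves to the reader.
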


We also have the following property about the function $q=q_{\bm{r-u^*}}^*(d)$, and its proof is exactly the same as that of Lemma \ref{G} so we omit it.
\begin{lemma}\label{G1}
Suppose that  \textbf{H}1$^*$ and \textbf{H}2$^*$ hold. Then the two functions $q=q_{\bm{r-u^*}}^*(d)$ and $q=\ds\f{q_1}{d_1}d$ have exactly one intersection point $(d_1, q_1)$ in the first quadrant.
\end{lemma}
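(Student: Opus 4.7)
The plan is to mirror the proof of Lemma~\ref{G} verbatim; the only modification needed for case~(b) is to re-verify the strict monotonicity of $\mu \mapsto \la_1(\mu d_1, \mu q_1, \bm{r-u^*})$ under hypothesis \textbf{H}1$^*$ in place of \textbf{H}1.

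First, I would observe that $(d_1, q_1)$ lies on both curves. It sits on $q = (q_1/d_1)\,d$ tautologically. It also sits on $q = q^*_{\bm{r-u^*}}(d)$: rewriting the equilibrium equation \eqref{semitu} as $(d_1 D + q_1 Q + \mathrm{diag}(r - u_i^*))\bm u^* = \bm 0$ exhibits $\bm u^* \gg \bm 0$ as a positive eigenvector with eigenvalue $0$, so by the Perron--Frobenius theorem $\la_1(d_1,q_1,\bm{r-u^*}) = 0$, which gives $q_1 = q^*_{\bm{r-u^*}}(d_1)$.

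For uniqueness, I would argue by contradiction: suppose some $\bar d_1 = \bar\mu d_1$ with $\bar\mu > 1$ (WLOG) also satisfies $q^*_{\bm{r-u^*}}(\bar d_1) = \bar\mu q_1$, i.e.\ $\la_1(\bar\mu d_1, \bar\mu q_1, \bm{r-u^*}) = 0$. Setting $A = d_1 D + q_1 Q$ and $R = \mathrm{diag}(r - u_i^*)$, Lemma~\ref{theorem_quasi}(i) yields that $\mu \mapsto s(\mu A + R) = \la_1(\mu d_1, \mu q_1, \bm{r-u^*})$ is strictly decreasing on $(0,\infty)$, producing the contradiction
\[
0 \;=\; \la_1(d_1, q_1, \bm{r-u^*}) \;>\; \la_1(\bar\mu d_1, \bar\mu q_1, \bm{r-u^*}) \;=\; 0.
\]

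The main point to verify, and the sole place where the new hypothesis \textbf{H}1$^*$ enters, is that $s(A) < 0$ so that the strictly-decreasing branch of Lemma~\ref{theorem_quasi} is applicable. I would check this by computing the column sums of $A$: under \textbf{H}1$^*$, columns $1,\dots,n-1$ of $D$ and of $Q$ each sum to $0$, whereas column $n$ of $D$ sums to $-1$ (because $D_{nn}=-2$) and column $n$ of $Q$ sums to $-1$. Hence $\mathbf 1^T A = (0,\dots,0,-(d_1+q_1))$. Taking the Perron eigenvector $\bm\phi \gg \bm 0$ of $A$ and summing $A\bm\phi = s(A)\bm\phi$ over $i$ gives
\[
s(A)\sum_{i=1}^n \phi_i \;=\; \sum_{j=1}^n \phi_j \sum_{i=1}^n A_{ij} \;=\; -(d_1+q_1)\,\phi_n \;<\; 0,
\]
so $s(A) < 0$ as required. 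This is the only genuine obstacle, and it is dispatched by a column-sum calculation; the rest of the argument copies Lemma~\ref{G} line for line.
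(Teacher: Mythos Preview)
Your proof is correct and follows exactly the approach of the paper, which explicitly omits the proof of Lemma~\ref{G1} as identical to that of Lemma~\ref{G}. Your explicit verification that $s(d_1 D + q_1 Q) < 0$ via the column-sum identity $\mathbf 1^T A = (0,\dots,0,-(d_1+q_1))$ is a welcome clarification of which branch of Lemma~\ref{theorem_quasi} is invoked, but otherwise the argument is line-for-line the same.
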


Again we define parameter sets:
\begin{equation*}
\begin{split}
G_1^*:=&\{(d,q):d_1<d\le \frac{d_1}{q_1} q, (d,q)\ne(d_1,q_1)\},\\
G_2^*:=&\{(d,q): \frac{d_1}{q_1}q\le d\le d_1, q>0,(d,q)\ne(d_1,q_1)\}.
\end{split}
\end{equation*}
Since  the two functions $q=q_{\bm{r-u^*}}^*(d)$ and $q=\ds\f{q_1}{d_1}d$ have exactly one intersection point $(d_1, q_1)$ in the first quadrant, we have that $G_1^*\subset S_1^*$ and $G_2^*\subset S_2^*$.
The following result is similar to Lemma \ref{iv2}.

\begin{lemma}\label{iv23}
Suppose that  \textbf{H}1$^*$ and \textbf{H}2$^*$ hold.  Then if $(d_2,q_2)\in G_1^*\cup G_2^*$,  model \eqref{pat-cp} has no positive equilibrium.
\end{lemma}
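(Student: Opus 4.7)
The plan is to mimic the proof of Lemma~\ref{iv2}, but instead of using the level curve $q=q^*_{\bm{r-\hat u-\hat v}}(d)$ (which in case~(b) is defined only on a bounded interval and whose shape is not controlled by Lemma~\ref{qstr2}), to argue directly along the ray $\{(\mu d_1,\mu q_1):\mu>0\}$.

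Assuming for contradiction that \eqref{pat-cp} admits a positive equilibrium $(\hat{\bm u},\hat{\bm v})\gg(\bm 0,\bm 0)$, the two equilibrium equations show that $\hat{\bm u}$ and $\hat{\bm v}$ are positive eigenvectors (with eigenvalue $0$) of the irreducible essentially nonnegative matrices $d_1D+q_1Q+\mathrm{diag}(r-\hat u_i-\hat v_i)$ and $d_2D+q_2Q+\mathrm{diag}(r-\hat u_i-\hat v_i)$, respectively. By Perron--Frobenius,
\[
\la_1(d_1,q_1,\bm{r-\hat u-\hat v}) = \la_1(d_2,q_2,\bm{r-\hat u-\hat v}) = 0.
\]
Under \textbf{H}1$^*$ the matrix $d_1D+q_1Q$ has column sums $(0,\ldots,0,-(d_1+q_1))$, so pairing its positive Perron eigenvector with $(1,\ldots,1)$ on the left yields $s(d_1D+q_1Q)<0$. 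Lemma~\ref{theorem_quasi}(i), applied with $A=d_1D+q_1Q$ and $R=\mathrm{diag}(r-\hat u_i-\hat v_i)$, therefore shows that $\mu\mapsto \la_1(\mu d_1,\mu q_1,\bm{r-\hat u-\hat v})$ is strictly decreasing on $(0,\infty)$.

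Combining this with the strict decrease of $\la_1$ in $q$ from Lemma~\ref{prop-p}, I would finish in two symmetric cases. If $(d_2,q_2)\in G_1^*$, set $\mu:=d_2/d_1>1$; the defining inequality rearranges to $q_2\ge \mu q_1$, and
\[
0 = \la_1(d_2,q_2,\bm{r-\hat u-\hat v}) \le \la_1(d_2,\mu q_1,\bm{r-\hat u-\hat v}) = \la_1(\mu d_1,\mu q_1,\bm{r-\hat u-\hat v}) < \la_1(d_1,q_1,\bm{r-\hat u-\hat v}) = 0,
\]
a contradiction. If $(d_2,q_2)\in G_2^*$, set $\mu:=d_2/d_1\le 1$ so that $q_2\le \mu q_1$; the analogous chain runs the other way and gives $\la_1(d_2,q_2,\bm{r-\hat u-\hat v})\ge \la_1(\mu d_1,\mu q_1,\bm{r-\hat u-\hat v})\ge 0$, with the exclusion $(d_2,q_2)\neq(d_1,q_1)$ forcing at least one of $\mu<1$ or $q_2<q_1$ to hold, so one of the two inequalities is strict and contradicts $\la_1(d_2,q_2,\bm{r-\hat u-\hat v})=0$.

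The main obstacle, relative to case~(a), is precisely the loss of the curve-intersection geometry behind Lemma~\ref{iv2}: in case~(b) the curve $q=q^*_{\bm{r-\hat u-\hat v}}(d)$ has bounded domain and unknown global shape, so the ``unique intersection with the line $q=\tfrac{q_1}{d_1}d$'' argument used in Lemma~\ref{G} cannot be transplanted. The ray parametrization replaces this geometric picture by a single spectral-monotonicity input whose only precondition is $s(d_1D+q_1Q)<0$; this in turn is immediate from the column-sum identity and does not require the pointwise bound $\hat u_i+\hat v_i<r$ used in the case~(a) proof, which in case~(b) is not even guaranteed.
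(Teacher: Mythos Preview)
Your proof is correct. Both your argument and the paper's rest ultimately on the same spectral input---the strict decrease of $\mu\mapsto\la_1(\mu d_1,\mu q_1,\bm{r-\hat u-\hat v})$ from Lemma~\ref{theorem_quasi}(i)---together with the strict decrease in $q$ from Lemma~\ref{prop-p}. The difference is packaging. The paper first proves $\max_i(r-\hat u_i-\hat v_i)>0$ so that the level curve $q=q^*_{\bm{r-\hat u-\hat v}}(d)$ exists, then invokes the unique-intersection argument of Lemma~\ref{G1}; because the regions $G_1^*,G_2^*$ in case~(b) are deliberately narrower than $G_1,G_2$ (both bounded by the ray and by the vertical line $d=d_1$), the unique-intersection fact plus the behaviour of the curve near $d=0$ suffices to place them on the correct side of the curve, and curve monotonicity is not needed. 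So your remark that the curve argument ``cannot be transplanted'' is slightly off: it can, precisely because of how $G_1^*,G_2^*$ are defined. That said, your ray-parametrization route is cleaner: it dispenses with the curve entirely, avoids the auxiliary step $\max_i(r-\hat u_i-\hat v_i)>0$, and reduces the whole argument to two short inequality chains. Your verification that $s(d_1D+q_1Q)<0$ via column sums is correct, and the handling of strictness in the $G_2^*$ case (at least one of $\mu<1$ or $q_2<\mu q_1$ must hold) is sound.
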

\begin{proof}
Let $(d_2,q_2)\in G_1^*\cup G_2^*$. Suppose to the contrary that model \eqref{pat-cp} admits a positive equilibrium $(\hat{\bm u},\hat{\bm v})$, where
$\hat{\bm u}=(\hat u_1,\dots,\hat u_n)\gg\bm 0$ and  $\hat{\bm v}=(\hat v_1,\dots,\hat v_n)\gg\bm 0$.
Then, we have
\begin{equation}\label{ind-uv11}
\begin{split}
&(d_1+q_1)(\hat u_{n-1}-\hat u_n)=d_1\hat u_n-\hat u_n(r-\hat u_n-\hat v_n),\\
&(d_1+q_1)(\hat u_{i-1}-\hat u_i)=d_1(\hat u_i-\hat u_{i+1})-\hat u_i(r-\hat u_i-\hat v_i),\;\;i=2,\dots,n-1,\\
&q_1\hat u_1=d_1(\hat u_{2}-\hat u_1)+\hat u_1(r-\hat u_1-\hat v_1).
\end{split}
\end{equation}
and
\begin{equation}\label{ind-uv12}
\begin{split}
&(d_2+q_2)(\hat v_{n-1}-\hat v_n)=d_2\hat v_n-\hat v_n(r-\hat u_n-\hat v_n),\\
&(d_2+q_2)(\hat v_{i-1}-\hat v_i)=d_2(\hat v_i-\hat v_{i+1})-\hat v_i(r-\hat u_i-\hat v_i),\;\;i=2,\dots,n-1,\\
&q_2\hat v_1=d_2(\hat v_{2}-\hat v_1)+\hat v_1(r-\hat u_1-\hat v_1).
\end{split}
\end{equation}
We claim  $\ds\max_{1\le i\le n}\{r-\hat u_i-\hat v_i\}>0$. Suppose to the contrary that $r-\hat u_i-\hat v_i\le 0$ for all $1\le i\le n$. Then, by the first two equations in both \eqref{ind-uv11} and \eqref{ind-uv12}, we obtain that
$\hat z_1>\hat z_2>\dots>\hat z_n$ for $z=u,v$. Then, by the third equation in \eqref{ind-uv11}, we get $q\hat u_1=d(\hat u_{2}-\hat u_1)+\hat u_1(r-\hat u_1-\hat v_1)<0$, which is a contradiction. Therefore,  $\ds\max_{1\le i\le n}\{r-\hat u_i-\hat v_i\}>0$. By Proposition \ref{qb},  function $q=q_{\bm{r-\hat u-\hat v}}^*(d)$ is well-defined. Similar to Lemma \ref{G1},  functions $q=q_{\bm{r-\hat u-\hat v}}^*(d)$ and $q=\ds\f{q_1}{d_1}d$ have exactly one intersection point $(d_1, q_1)$ in the first quadrant of the $d-q$ plane. The rest of the proof is similar to that of Lemma \ref{iv2}, so we omit it here.
\end{proof}

By Proposition \ref{theorem_iv2}, Lemma \ref{iv23} and the monotone dynamical system theory, we have the following result about the global dynamics of model \eqref{pat-cp}. We omit the proof as it is similar to Theorem \ref{theorem_global}.
\begin{theorem}\label{theorem_global2}
Suppose that  \textbf{H}1$^*$ and \textbf{H}2$^*$ hold.
Then the following statements hold:
\begin{enumerate}
\item [{$\rm (i)$}] If $(d_2,q_2)\in G_1^*$,   semi-trivial
equilibrium $(\bm u^*,\bm 0)$ is globally asymptotically stable for model \eqref{pat-cp};
\item  [{$\rm (ii)$}] If $(d_2,q_2)\in G_2^*$,   semi-trivial
equilibrium $(\bm 0,\bm v^*)$ exists and is globally  asymptotically stable for model \eqref{pat-cp}.
\end{enumerate}
\end{theorem}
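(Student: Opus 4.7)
The plan is to mirror the proof of Theorem \ref{theorem_global} line by line, using the starred analogues already established: Proposition \ref{theorem_iv2} for local stability on $S_1^*/S_2^*$, Lemma \ref{iv23} for the absence of coexistence equilibria, and the monotone dynamics trichotomy from Section 2. The inclusions $G_1^* \subset S_1^*$ and $G_2^* \subset S_2^*$ already settle the stability of $(\bm u^*, \bm 0)$ on each region, so the real work is to pin down the status of $(\bm 0, \bm v^*)$ in the two cases.

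For part (i) with $(d_2, q_2) \in G_1^*$, I would split on whether $\bm v^*$ exists. If it does not (which occurs precisely when $d_2 \geq d^*$, or $d_2 < d^*$ with $q_2 \geq q_{\bm r}^*(d_2)$, by Proposition \ref{qb}), then Lemma \ref{DS-single} makes the origin globally attractive on the invariant face $\{\bm u = \bm 0\}$, and Lemma \ref{iv23} together with the trichotomy forces global convergence to $(\bm u^*, \bm 0)$. If $\bm v^*$ does exist, I would exploit the symmetry of \eqref{pat-cp} under the swap $(\bm u, d_1, q_1) \leftrightarrow (\bm v, d_2, q_2)$: a short check shows $(d_2, q_2) \in G_1^*$ is equivalent to $d_1 < d_2$ and $q_1/d_1 \leq q_2/d_2$, which places $(d_1, q_1)$ inside the set $\tilde G_2^*$ defined from $(d_2, q_2)$ by the same formula that defines $G_2^*$ from $(d_1, q_1)$. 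Since $\tilde G_2^* \subset \tilde S_2^*$, the swapped Proposition \ref{theorem_iv2} yields $\la_1(d_1, q_1, \bm r - \bm v^*) > 0$, so $(\bm 0, \bm v^*)$ is unstable, and Lemma \ref{iv23} plus the trichotomy completes the argument.

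For part (ii) with $(d_2, q_2) \in G_2^*$, the only new ingredient is to verify that $(\bm 0, \bm v^*)$ actually exists. The bound $d_2 \le d_1 < d^*$ gives the diffusion condition. For the advection condition, the inclusion $G_2^* \subset S_2^*$ provides $q_2 < q_{\bm{r - u^*}}^*(d_2)$, and Lemma \ref{prioriu2} (which gives $\bm 0 \ll \bm u^* \ll r$, so $\bm r > \bm r - \bm u^* \gg \bm 0$) combined with Lemma \ref{qstr2}(ii) yields $q_{\bm{r - u^*}}^*(d_2) < q_{\bm r}^*(d_2)$; hence $q_2 < q_{\bm r}^*(d_2)$ and $\bm v^*$ exists. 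With $(\bm u^*, \bm 0)$ unstable by Proposition \ref{theorem_iv2}, Lemma \ref{iv23} excluding coexistence, and $(\bm 0, \bm v^*)$ now in hand, the trichotomy delivers global asymptotic stability of $(\bm 0, \bm v^*)$. The main delicate point is the symmetry bookkeeping in part (i), namely confirming that the swapped \textbf{H}2$^*$ ($0 < d_2 < d^*$ and $q_2 < q_{\bm r}^*(d_2)$) is exactly the condition that $\bm v^*$ exists and that $(d_1, q_1) \in \tilde G_2^*$; beyond that, the proof is a faithful translation of Theorem \ref{theorem_global}.
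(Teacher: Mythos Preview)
Your proposal is correct and follows essentially the same route as the paper, which simply states that the proof mirrors Theorem \ref{theorem_global} using Proposition \ref{theorem_iv2}, Lemma \ref{iv23}, and the monotone dynamics trichotomy. Your treatment is in fact more explicit than the paper's: you spell out the symmetry bookkeeping (checking $(d_1,q_1)\in\tilde G_2^*$ when $\bm v^*$ exists) and invoke Lemma \ref{qstr2}(ii) together with Lemma \ref{prioriu2} to verify $q_{\bm{r-u^*}}^*(d_2)<q_{\bm r}^*(d_2)$ in part (ii), whereas the paper leaves all of this implicit.
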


\begin{remark}
Theorem  \ref{theorem_global2} is illustrated in Figure \ref{FigB}. We are able to completely understand the global dynamics of model \eqref{pat-cp} in the colored regions, in which  competitive exclusion occurs.
\end{remark}

\begin{figure}[htbp]
 \centering
  \includegraphics[width=0.6\textwidth]{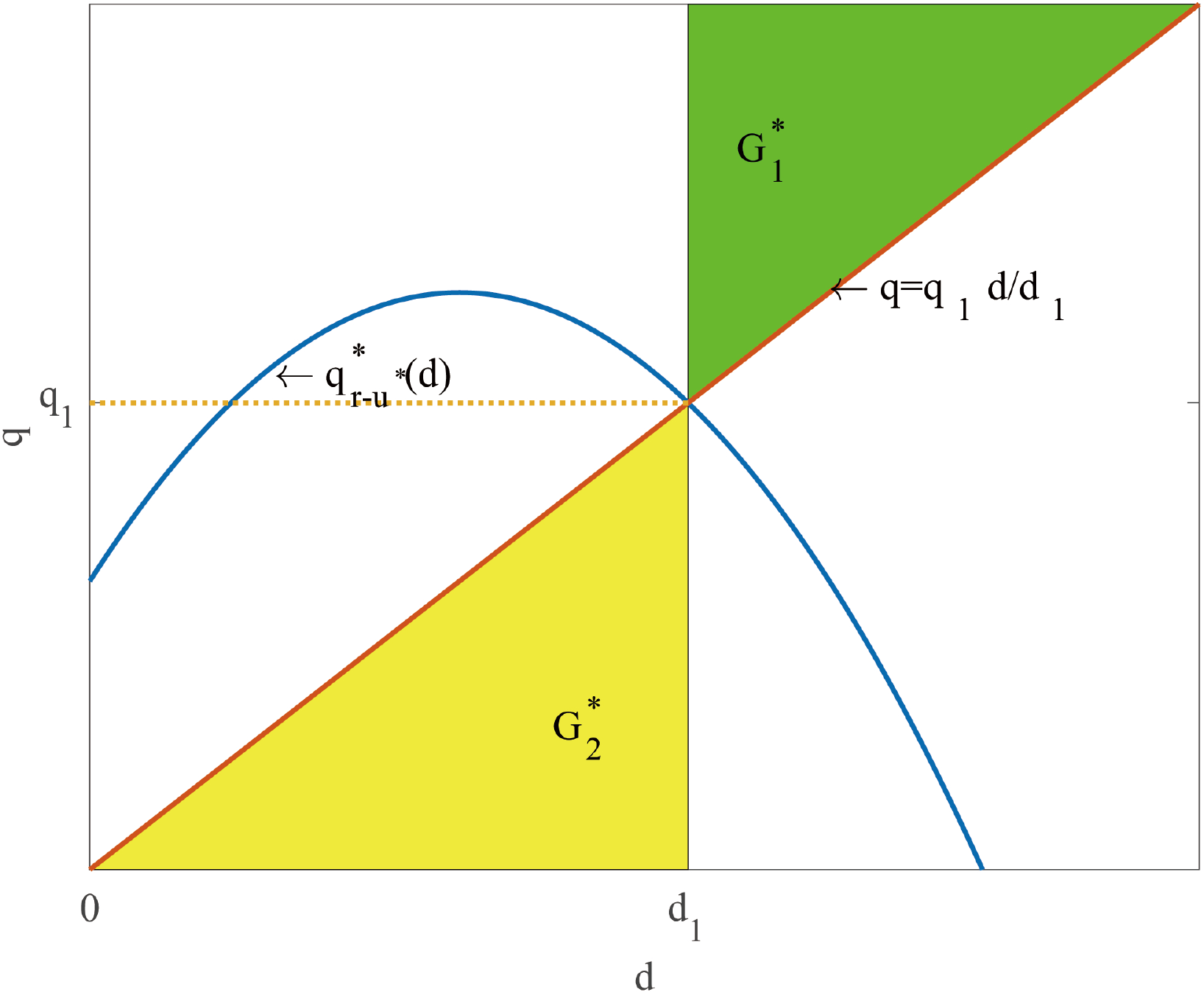}
 \caption{Illustration of Proposition \ref{theorem_iv2} and Theorem \ref{theorem_global2} for model \eqref{pat-cp}-case (b). Here, $d_1$ and $q_1$ are fixed and satisfy \textbf{H}2$^*$ such that  $(\bm u^*,\bm 0)$ exists. If $(d_2, q_2)$ is above the curve $q=q_{\bm{r-u^*}}^*(d)$, then  $(\bm u^*,\bm 0)$ is stable; if $(d_2, q_2)$ is under the curve, then $(\bm u^*,\bm 0)$ is unstable. If $(d_2, q_2)\in G_1^*$,   $(\bm u^*,\bm 0)$ is globally asymptotically stable; and if $(d_2, q_2)\in G_2^*$, $(\bm 0,\bm v^*)$ is globally asymptotically stable.
 }
 \label{FigB}
 \end{figure}

We have the following observations from Theorem \ref{theorem_global2}:
\begin{corollary}\label{corollary_global2}
Suppose that  \textbf{H}1$^*$ and \textbf{H}2$^*$ hold. Fix $d_2=d_1$. If $q_2>q_1$,  $(\bm u^*,\bm 0)$ is globally asymptotically stable for  \eqref{pat-cp};  if $q_2<q_1$, $(\bm 0,\bm v^*)$ is globally asymptotically stable.
\end{corollary}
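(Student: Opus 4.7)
The corollary splits naturally into two cases, and my plan is to handle them separately since only one of them is an immediate instance of Theorem \ref{theorem_global2}. For $q_2 < q_1$ with $d_2 = d_1$, the point $(d_2, q_2) = (d_1, q_2)$ sits in the region $G_2^*$ because $\frac{d_1}{q_1} q_2 < d_1 = d_2$ and $q_2 > 0$ with $(d_1, q_2) \ne (d_1, q_1)$, so Theorem \ref{theorem_global2}(ii) applies directly and $(\bm 0, \bm v^*)$ is globally asymptotically stable. The case $q_2 > q_1$ with $d_2 = d_1$ is slightly outside $G_1^*$ as formally defined (which requires $d_2 > d_1$ strictly), so a short direct argument is needed; this is the only part that is not a routine quote of Theorem \ref{theorem_global2}.

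For this second case, I plan to run the three checks that power the monotone dynamical systems dichotomy stated at the end of Section 2. First, since $\bm u^*$ solves \eqref{semitu}, we have $\lambda_1(d_1, q_1, \bm{r-u^*}) = 0$, and by the strict monotonicity of $\lambda_1(d_1, \cdot\,, \bm{r-u^*})$ in $q$ (Lemma \ref{prop-p}), $\lambda_1(d_1, q_2, \bm{r-u^*}) < 0$, so $(\bm u^*, \bm 0)$ is locally asymptotically stable. Second, to rule out coexistence, suppose a positive equilibrium $(\hat{\bm u}, \hat{\bm v})$ exists; then each species equation forces $\lambda_1(d_j, q_j, \bm{r - \hat u - \hat v}) = 0$ for $j = 1, 2$. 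Since $d_1 = d_2$ while $q_1 \ne q_2$, this contradicts strict monotonicity of $\lambda_1$ in $q$, so no positive equilibrium exists.

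Third, I must address $(\bm 0, \bm v^*)$. If $q_2 \ge q_{\bm r}^*(d_1)$, then by Proposition \ref{qb} the equilibrium $(\bm 0, \bm v^*)$ does not exist. If instead $q_1 < q_2 < q_{\bm r}^*(d_1)$, then $(\bm 0, \bm v^*)$ exists and $\lambda_1(d_1, q_2, \bm{r - v^*}) = 0$; by Lemma \ref{prop-p} once more, $\lambda_1(d_1, q_1, \bm{r - v^*}) > 0$, so $(\bm 0, \bm v^*)$ is unstable. Combining these three facts, namely $(\bm u^*, \bm 0)$ locally asymptotically stable, $(\bm 0, \bm v^*)$ unstable or nonexistent, and no coexistence equilibrium, the monotone dynamical system theory recalled in Section 2 yields that $(\bm u^*, \bm 0)$ is globally asymptotically stable. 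I expect no substantive obstacle; the whole argument is driven by strict monotonicity of $\lambda_1$ in $q$ along the vertical slice $d = d_1$, and the only subtlety is noticing that the formal definition of $G_1^*$ excludes this slice and thus demands the direct verification above rather than a bare appeal to Theorem \ref{theorem_global2}(i).
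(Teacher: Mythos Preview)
Your argument is correct. The paper presents this corollary as a direct observation from Theorem~\ref{theorem_global2} with no further proof, but you are right that the case $d_2=d_1$, $q_2>q_1$ is not literally covered by $G_1^*$ as defined (the inequality $d_1<d$ there is strict), whereas the other case $q_2<q_1$ does land inside $G_2^*$. Your direct verification for the $q_2>q_1$ case, using only the strict monotonicity of $\lambda_1(d,\cdot,\bm m)$ in $q$ from Lemma~\ref{prop-p} to check local stability of $(\bm u^*,\bm 0)$, instability or nonexistence of $(\bm 0,\bm v^*)$, and nonexistence of a positive equilibrium, is exactly the engine behind the paper's own Lemmas~\ref{G1} and~\ref{iv23} and Theorem~\ref{theorem_global2}, specialized to the vertical slice $d=d_1$. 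So your approach is essentially the same as the paper's, just made explicit where the paper is tacit; in effect you have shown that the proof of Theorem~\ref{theorem_global2} goes through verbatim with $G_1^*$ enlarged to allow $d_1\le d$.
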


By Corollary \ref{corollary_global2},  the species with a smaller advection rate can invade and  replace the resident species in this case. However, whether a smaller or larger diffusion rate is favored seems to be more complicated in case (b).  If we treat $\bm v$ as the mutating species such that $d_2$ is close to $d_1$ and $q_1=q_2$, then $\left[q_{\bm{r-u^*}}^*(d)\right]'|_{d=d_1}>0$ means that $\bm v$ can invade if and only if $d_2>d_1$; $\left[q_{\bm{r-u^*}}^*(d)\right]'|_{d=d_1}<0$ means that $\bm v$ can invade if and only if $d_2<d_1$. We will show that the sign $\left[q_{\bm{r-u^*}}^*(d)\right]'|_{d=d_1}$ is not definite.

 Define $\mathcal{S}:=\{(d,q):0<d<d^*,\;0<q<q^*_{\bm r}(d)\}$, and recall that $(\bm u^*, \bm 0)$ exists if and only if $(d_1, q_1)\in \mathcal{S}$.
 Let $\mathcal{S}':=\{(d,q): 0<q<r, \; q=q^*_{\bm r}(d)\}$. By Lemma \ref{qstr2}, $\mathcal{S}'$ is non-empty and it is a curve $q=q^*_{\bm r}(d)$ connecting to  $(d^*, 0)$.

\begin{proposition}\label{q1pr2}
Suppose that  \textbf{H}1* and \textbf{H}2* holds. Then, $\left[q_{\bm{r-u^*}}^*(d)\right]'|_{d=d_1}$ changes sign for $(d_1,q_1)\in \mathcal{S}$.
Moreover,
\begin{enumerate}
    \item [{$\rm (i)$}] $\ds\lim_{(d_1,q_1)\to (d_0,0)}\left[q_{\bm{r-u^*}}^*(d)\right]'|_{d=d_1}<0$ for any $d_0\in(0,d^*)$;
    \item [{$\rm (ii)$}] $\ds\lim_{(d_1,q_1)\to (0,q_0)}\left[q_{\bm{r-u^*}}^*(d)\right]'|_{d=d_1}>0$ for any $q_0\in(0,r)$;
     \item [{$\rm (iii)$}] $\ds\lim_{(d_1,q_1)\to (d_0,q_0)}\left[q_{\bm{r-u^*}}^*(d)\right]'|_{d=d_1}<0$ for any  $(d_0, q_0)\in \mathcal{S}'$.
\end{enumerate}
\end{proposition}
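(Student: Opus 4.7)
The plan is to combine implicit differentiation with the weighted eigenvector identity underlying Lemma~\ref{qstr2}(iii). Since $\la_1(d,q^*_{\bm{r-u^*}}(d),\bm{r-u^*})\equiv 0$, with $\bm u^*=\bm u^*(d_1,q_1)$ treated as a fixed datum, differentiating in $d$ and invoking Lemma~\ref{prop-p} (which gives $\partial_q\la_1<0$ strictly) yields
\begin{equation*}
\bigl[q^*_{\bm{r-u^*}}(d)\bigr]'\Big|_{d=d_1}=-\frac{\partial_d\la_1(d_1,q_1,\bm{r-u^*})}{\partial_q\la_1(d_1,q_1,\bm{r-u^*})},
\end{equation*}
so the sign of the slope agrees with the sign of $\partial_d\la_1$ at $(d_1,q_1,\bm{r-u^*})$.

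To extract this sign, I would apply the same $\beta_i$-weighted identity used in the proof of Lemma~\ref{qstr2}(iii), taking the positive eigenvector to be $\bm u^*$ itself (legitimate because $\la_1(d_1,q_1,\bm{r-u^*})=0$ and $\bm u^*\gg\bm 0$, and since both sides of the identity are quadratic in the eigenvector the normalization is irrelevant). With $\beta_i=\bigl(d_1/(d_1+q_1)\bigr)^{i-1}$, this gives
\begin{equation*}
\partial_d\la_1\cdot\sum_{i=1}^n\beta_i (u_i^*)^2=\sum_{i=1}^{n-1}\beta_i(u_{i+1}^*-u_i^*)\!\left[u_i^*-\tfrac{d_1}{d_1+q_1}u_{i+1}^*\right]-\beta_n(u_n^*)^2,
\end{equation*}
the final $-\beta_n(u_n^*)^2$ being the case-(b) contribution of $D_{nn}=-2$ (cf.\ equation~\eqref{qphi}). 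The problem is thus reduced to an asymptotic analysis of the right-hand side in the three specified limits.

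For part~(i), as $q_1\to 0^+$ with $d_1\to d_0\in(0,d^*)$, one has $d_1/(d_1+q_1)\to 1$ and $\beta_i\to 1$; the right-hand side collapses to $-\sum_{i=1}^{n-1}(u_{i+1}^*-u_i^*)^2-(u_n^*)^2<0$ (strict because $u_n^*>0$ by Lemma~\ref{prioriu2}), so $\partial_d\la_1<0$ and the slope is negative. For part~(iii), as $(d_1,q_1)\to(d_0,q_0)\in\mathcal{S}'$ we have $\la_1(d_0,q_0,\bm r)=0$, which forces $\bm u^*\to\bm 0$ and hence $\bm{r-u^*}\to\bm r$; by continuity of $\partial_d\la_1$ and $\partial_q\la_1$ in the coefficient vector,
\begin{equation*}
\lim_{(d_1,q_1)\to(d_0,q_0)}\bigl[q^*_{\bm{r-u^*}}\bigr]'\big|_{d=d_1}=\bigl[q^*_{\bm r}(d)\bigr]'\big|_{d=d_0}<0,
\end{equation*}
the final inequality being Lemma~\ref{qstr2}(iii) applied with $\bar d=d_0$ using $q^*_{\bm r}(\bar d)=q_0<r$. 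For part~(ii), as $d_1\to 0^+$ with $q_1\to q_0\in(0,r)$, $\gamma=d_1/(d_1+q_1)\to 0$, so $\beta_1=1$ while $\beta_i\to 0$ for $i\ge 2$; only the $i=1$ contribution survives on each side, yielding $\partial_d\la_1\cdot(u_1^*)^2\to(u_2^*-u_1^*)u_1^*$ in the limit. Setting $d_1=0$ in \eqref{semitu} produces $u_1^*=r-q_0>0$ together with the scalar recursion $q_0(u_i^*-u_{i-1}^*)=u_i^*(r-u_i^*)$ for $i=2,\dots,n$; an easy induction then shows $0<u_{i-1}^*<u_i^*<r$, in particular $u_2^*>u_1^*>0$, whence $\partial_d\la_1>0$ and the slope is positive.

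The principal obstacle is passing to the boundary of $\mathcal{S}$. In part~(ii) the matrix $d_1D+q_1Q$ becomes reducible at $d_1=0$ and the Perron-Frobenius framework for $\la_1$ degenerates, so one must take the limit inside the derived eigenvector identity rather than in the definition of $\partial_d\la_1$. In part~(iii) the eigenvector $\bm u^*$ itself collapses to $\bm 0$, so the argument must rely on continuity of the quotient $-\partial_d\la_1/\partial_q\la_1$ as a function of the coefficient vector $\bm{r-u^*}$ rather than of the vanishing eigenvector $\bm u^*$.
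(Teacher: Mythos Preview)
Your proposal is correct and follows essentially the same approach as the paper: both rest on the $\beta_i$-weighted eigenvector identity (your $\partial_d\la_1$ formula is the numerator of the paper's ratio \eqref{deriv3}, with $\bm\phi$ taken to be $\bm u^*$ at $d=d_1$), and both analyze the three limits by tracking $\beta_i$ and the limiting profile of $\bm u^*$. Your treatment of part~(iii)---passing the coefficient vector $\bm{r-u^*}\to\bm r$ and invoking Lemma~\ref{qstr2}(iii)---is a minor shortcut over the paper's direct recomputation via the normalized limit eigenvector $\check{\bm\phi}$, but the underlying mechanism is identical.
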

\begin{proof}
For simplicity, we denote $q^*_{\bm{r- u^*}}(d)$ by  $q_1^*(d)$.
Let $\bm \phi=(\phi_1,\phi_2,\dots,\phi_n)^T\gg \bm 0$ be the  eigenvector corresponding to the  eigenvalue $\la_1(d,q_1^*(d), \bm{r-u^*})=0$ with $\sum_{i=1}^n\phi_i=1$. Then, we have
\begin{equation}\label{q1eigv}
d\sum_{j=1}^n D_{ij}\phi_j+q_1^*(d)\sum_{j=1}^n Q_{ij}\phi_j+(r-u_i^*)\phi_i=0,\ \ i=1,...,n.
\end{equation}
 Similar to the proof of Lemma \ref{qstr2} {$\rm (iii)$}, we can show
\begin{equation}\label{deriv3}
   \left[q_{1}^*(d)\right]'=-\ds\f{\ds\sum_{i=1}^n\sum_{j=1}^n\beta_iD_{ij}\phi_i\phi_j}{\ds\sum_{i=1}^n\sum_{j=1}^n\beta_iQ_{ij}\phi_i\phi_j},
\end{equation}
where
 \begin{equation*}
(\beta_1, \beta_2, \beta_3,\dots, \beta_n)=\left(1,\ds\frac{d}{d+q_1^*(d)},\left(\ds\frac{d}{d+q_1^*(d)}\right)^2,\dots, \left(\ds\frac{d}{d+q_1^*(d)}\right)^{n-1}\right).
\end{equation*}
Since $\bm {u^*}$ is an eigenvector corresponding to $\la_1(d_1, q_1, \bm{r- u^*})(=0)$, $q_1^*(d_1)=q_1$ and $\bm \phi$ is a multiple of $\bm u^*$ when $d=d_1$.


It is easy to see that
\begin{equation}\label{deriv4}
\begin{split}
   & \lim_{(d_1,q_1)\to (d_0,0)} (\beta_1,\dots,\beta_n)=(1,\dots,1),\\
    &\lim_{(d_1,q_1)\to (0,q_0)} (\beta_1,\dots,\beta_n)=(1,0,\dots,0),
    \end{split}
\end{equation}
and
\begin{equation*}
  \ds\lim_{(d_1,q_1)\to (d_0,0)}\bm{u^*}=\bm{\tilde u^*} ,\;\;\text{and}\;\; \ds\lim_{(d_1,q_1)\to (0,q_0)}\bm {u^*} =\bm{\hat u^*},
\end{equation*}
where $\bm{\tilde u^*}$ and $\bm{\hat u^*}$ satisfy \eqref{semitu} with $(d_1,q_1)=(d_0,0)$ and $(d_1,q_1)=(0,q_0)$, respectively.
A direct computation implies that
\begin{equation*}
\tilde u_1^*>\dots>\tilde u_n^*>0,\;\;\text{and}\;\;
0<r-q_0=\hat u_1^*<\dots<\hat u_n^*.
\end{equation*}
This combined with \eqref{deriv3}-\eqref{deriv4} implies {$\rm (i)$}-{$\rm (ii)$}.

Finally, we prove {$\rm (iii)$}. We claim that $\lim_{(d_1,q_1)\to (d_0, q_0)}\bm{u^*}=\bm 0$. To see it, suppose to the contrary that, up to a subsequence, $\bm{u^*}\to \bm{{\check u}^*}\neq \bm 0$ as $(d_1,q_1)\to (d_0,q_0)$. Then, we have $\bm 0<\bm{{\check u}^*}\le\bm r$ and $\lambda_1(d_0, q_0, \bm r-\bm{{\check u}^*})=\ds\lim_{(d_1,q_1)\to (d_0, q_0)} \lambda_1(d_1, q_1, \bm{r- u^*})=0$. Since $(d_0, q_0)\in \mathcal{S}'$, we have $\lambda_1(d_0, q_0, \bm r)=0$. This implies $\lambda_1(d_0, q_0, \bm r-\bm{{\check u}^*})<\lambda_1(d_0, q_0, \bm r)=0$, which is a contradiction.

Letting $d=d_1$ in \eqref{q1eigv}, using $q^*_1(d_1)=q_1$ and $\bm{u^*}\to\bm 0$, we obtain  $\bm\phi\to \check {\bm\phi}$ as $(d_1,q_1)\to (d_0, q_0)$, where $\check {\bm\phi}$ satisfies
\begin{equation}\label{q1eigvlim}
d_0\sum_{j=1}^n D_{ij}\check\phi_j+q_0\sum_{j=1}^n Q_{ij}\check\phi_j+r\check\phi_i=0, \ \ i=1,...,n.
\end{equation}
We may rewrite \eqref{q1eigvlim} as
\begin{equation}\label{phidd22}
\begin{split}
&d_0(\check\phi_1-\check\phi_2)=(r-q_0)\check\phi_1,\\
&d_0(\check\phi_{i}-\check\phi_{i+1})=(d_0+q_0)(\check\phi_{i-1}-\check\phi_i)+r\check \phi_i,\;\;i=2,\dots,n-1,\\
&d_0(\check\phi_{n-1}-2\check\phi_n)+q_0(\check\phi_{n-1}-\check\phi_n)+r\check\phi_n=0.
\end{split}
\end{equation}
This combined with $q_0<r$ implies $\check \phi_1>\dots>\check\phi_n$. Therefore,
\begin{equation}\label{phidd33}
\sum_{i=1}^n\sum_{j=1}^n\check\beta_iD_{ij}\check\phi_i\check\phi_j = \ds\sum_{i=1}^{n-1}\check\beta_i\left(\check\phi_{i+1}-\check\phi_i\right)\left[\check\phi_{i}-\left(\ds\frac{d_0}{d_0+q_0}\right)\check\phi_{i+1}\right]-\check\beta_n\check\phi_n^2<0,
\end{equation}
where $(\check \beta_1,\dots,\check\beta_n)=(1,\dots,(\frac{d_0}{d_0+q_0})^{n-1})$. Similar to \eqref{sp7}, we can show
\begin{equation}\label{phidd44}
\sum_{i=1}^n\sum_{j=1}^n\check\beta_iQ_{ij}\check\phi_i\check\phi_j<0.
\end{equation}
Evaluating \eqref{deriv3} at $d=d_1$, taking $(d_1, q_1)\to (d_0, q_0)$, and using \eqref{phidd33}-\eqref{phidd44}, we obtain {$\rm (iii)$}.
\end{proof}

\begin{remark}
By Proposition \ref{q1pr2} and Lemma \ref{qstr2}, for each $q_1\in (0, r)$, there exists at least one $\bar d_1>0$ such that $\left[q_{\bm{r-u^*}}^*(d)\right]'|_{d=\bar d_1}=0$. Moreover, the sign of $\left[q_{\bm{r-u^*}}^*(d)\right]'|_{d=d_1}$ switches from positive to negative at $d_1=\bar d_1$.  This suggests that the evolutionary singular strategy $d_1=\bar d_1$ may be a locally convergent stable strategy.
\end{remark}

\begin{remark}
For corresponding reaction-diffusion models, a similar result of Theorem \ref{theorem_global2} was proved in a very recent paper \cite{yan2022competition}. To our best knowledge, similar results of Propositions \ref{theorem_iv2} and \ref{q1pr2} were not proved for reaction-diffusion models.
\end{remark}



 \FloatBarrier

 \section{Simulations and discussions}

In this section, we perform some numerical simulations when model \eqref{pat-cp} is coupled with free-flow (case (a)) or hostile (case (b)) boundary conditions.
 We consider four patches, i.e. $n=4$ and set $r=2$.

 \subsection{Simulations for case (a)}
We choose $d_1=1, q_1=0.5$ such that $(\bm u^*, \bm 0)$ exists for case (a).  We first plot the curve $q=q_{\bm{r-u^*}}^*(d)$, which divide the first quadrant into two subregions.  In Fig. \ref{figqa}, we see that the curve  $q=q_{\bm{r-u^*}}^*(d)$ is monotone increasing, which is expected due to Lemma \ref{qstr}.  By Propositions \ref{iv1}, $E_1=(\bm u^*, \bm 0)$ is stable if $(d_2, q_2)$ is above the curve  $q=q_{\bm{r-u^*}}^*(d)$ and unstable if it is below the curve.
 \begin{figure}[h]
\centering
\includegraphics[height=3in]{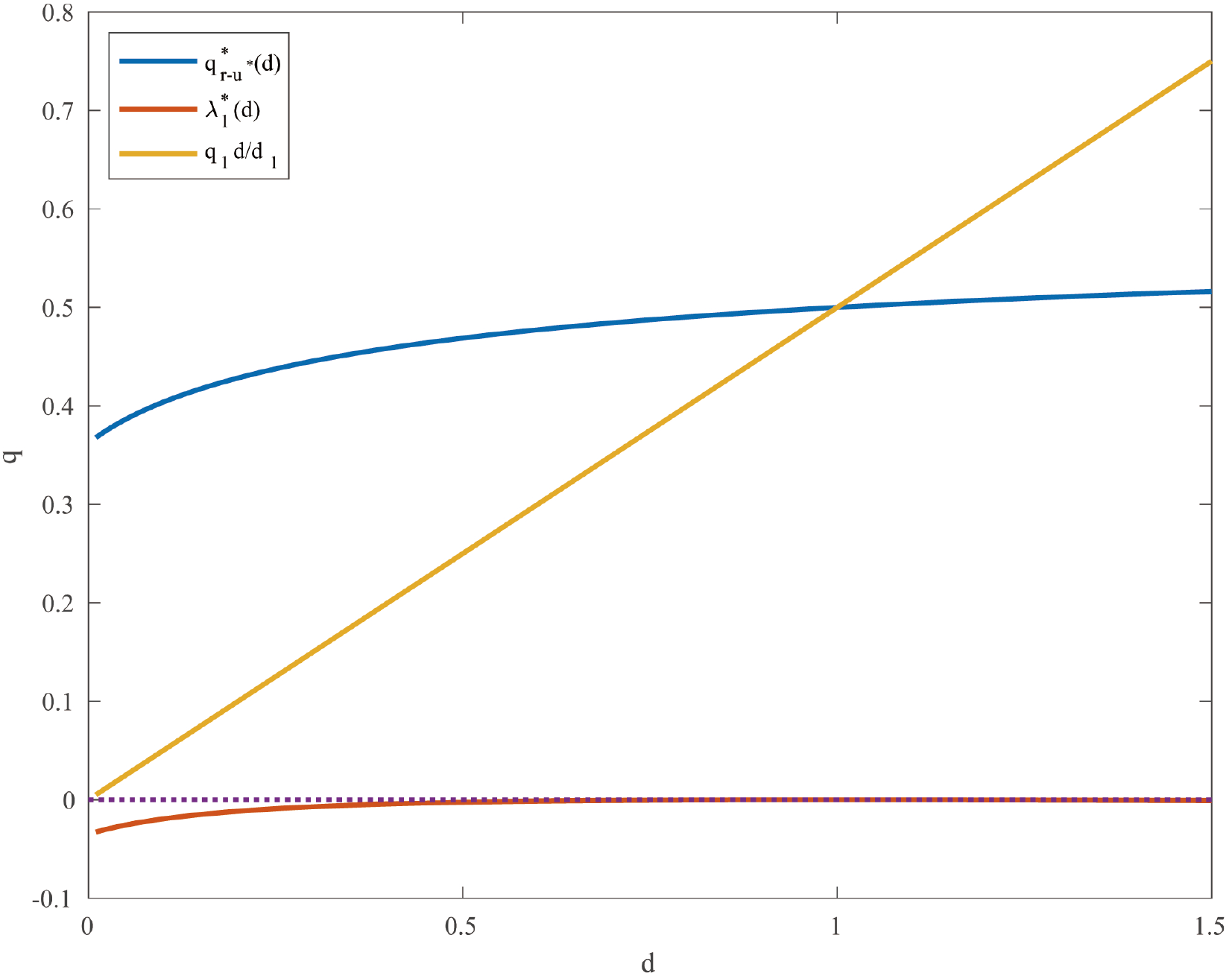}
\vskip -15pt
\caption{Curve $q=q_{\bm{r-u^*}}^*(d)$  with $n=4$, $r=2$, $d_1=1$, $q_1=0.5$ for case (a). The sign of the curve $q=\la^*_1(d)$ determines the stability of $E_2$ when $d_2=d$ and $q_2=q_{\bm{r-u^*}}^*(d)$.}
\label{figqa}
\vskip -8pt
\end{figure}

 We further consider the stability of $E_2=(\bm 0, \bm v^*)$ when $(d_2, q_2)=(d, q_{\bm{r-u^*}}^*(d))$, which is determined by the sign of the principal eigenvalue
 $$
\la^*_1(d_2)=\la_1\left(d_1,q_1,\bm{r-v^*}(d_2,q^*_{\bm {r-u^*}}(d_2))\right).
 $$
 In Fig. \ref{figqa}, the curve $\la^*_1(d)$ seems to be below zero. Therefore, if we choose $(d_2, q_2)$ sufficiently close  to but above the curve $\la^*_1(d)$, then both
 $(\bm u^*, \bm 0)$ and  $(\bm 0, \bm v^*)$ are stable and we have bi-stability.
To confirm this, we choose $(d_2, q_2)=(0.08, 0.44)$, which is slightly above the curve $q=q_{\bm{r-u^*}}^*(d)$. In Fig. \ref{fig_bi}, we plot the solutions of \eqref{pat-cp} with  different initial data. If the initial data  is $\bm u(0)=(0.1, 0.1, 0.1, 0.1)$ and $\bm v(0)=(2, 2, 2, 2)$, then species $\bm v$ wins the competition; if the initial data  is $\bm u(0)=(5, 5, 5, 5)$, $\bm v(0)=(1, 1, 1, 1)$, then species $\bm u$ wins the competition. This confirms that it is possible to have bi-stability in case (a).  We conjecture that for case (a) we always have $\la^*_1(d_2)<0$ for all $d_2\neq d_1$ and the model has no stable coexistence equilibrium.
\begin{figure}[h]
\centering
\subfiguretopcaptrue
\subfigure[]{
\includegraphics[height=2in]{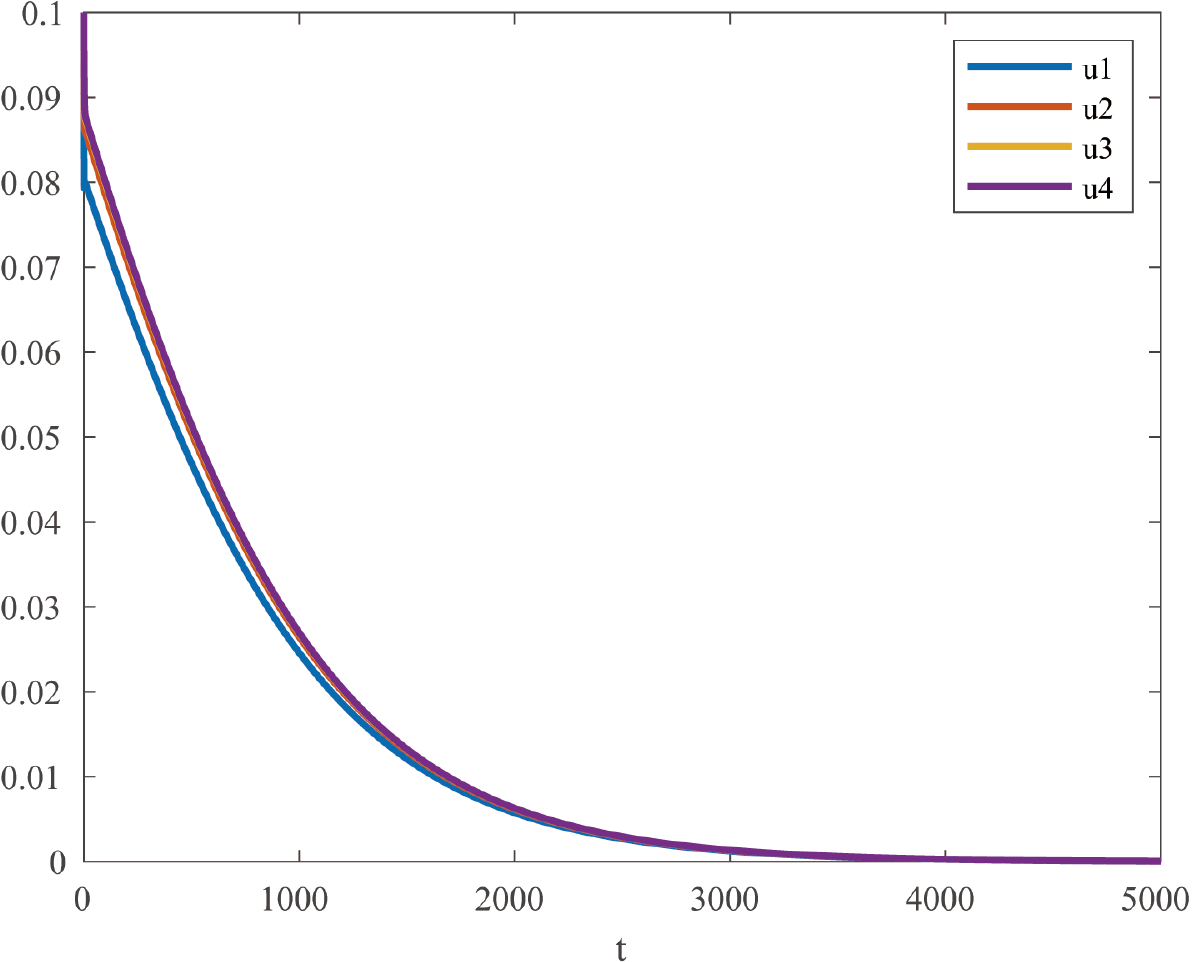}
}
\subfigure[]{
\includegraphics[height=2in]{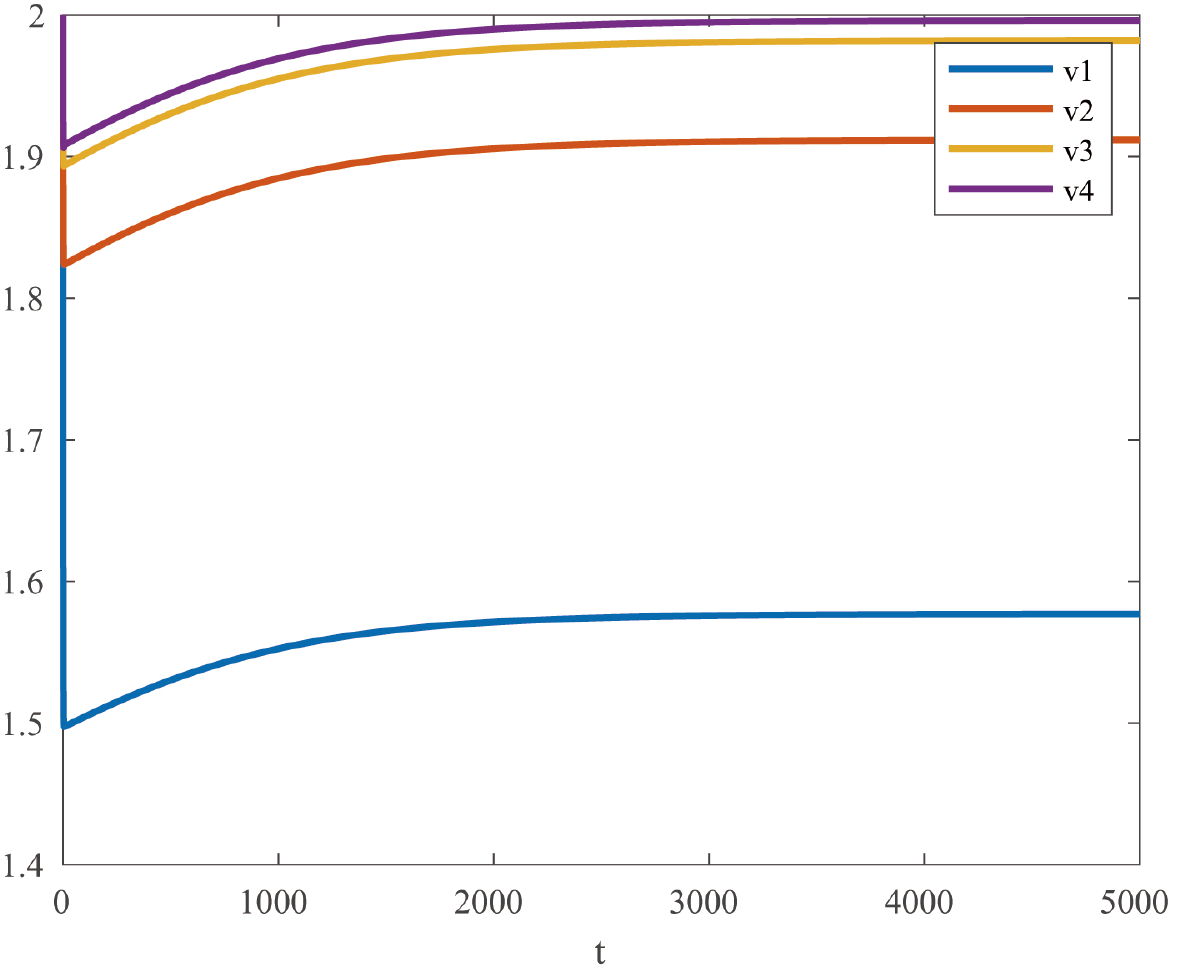}
}
\subfigure[]{
\includegraphics[height=2in]{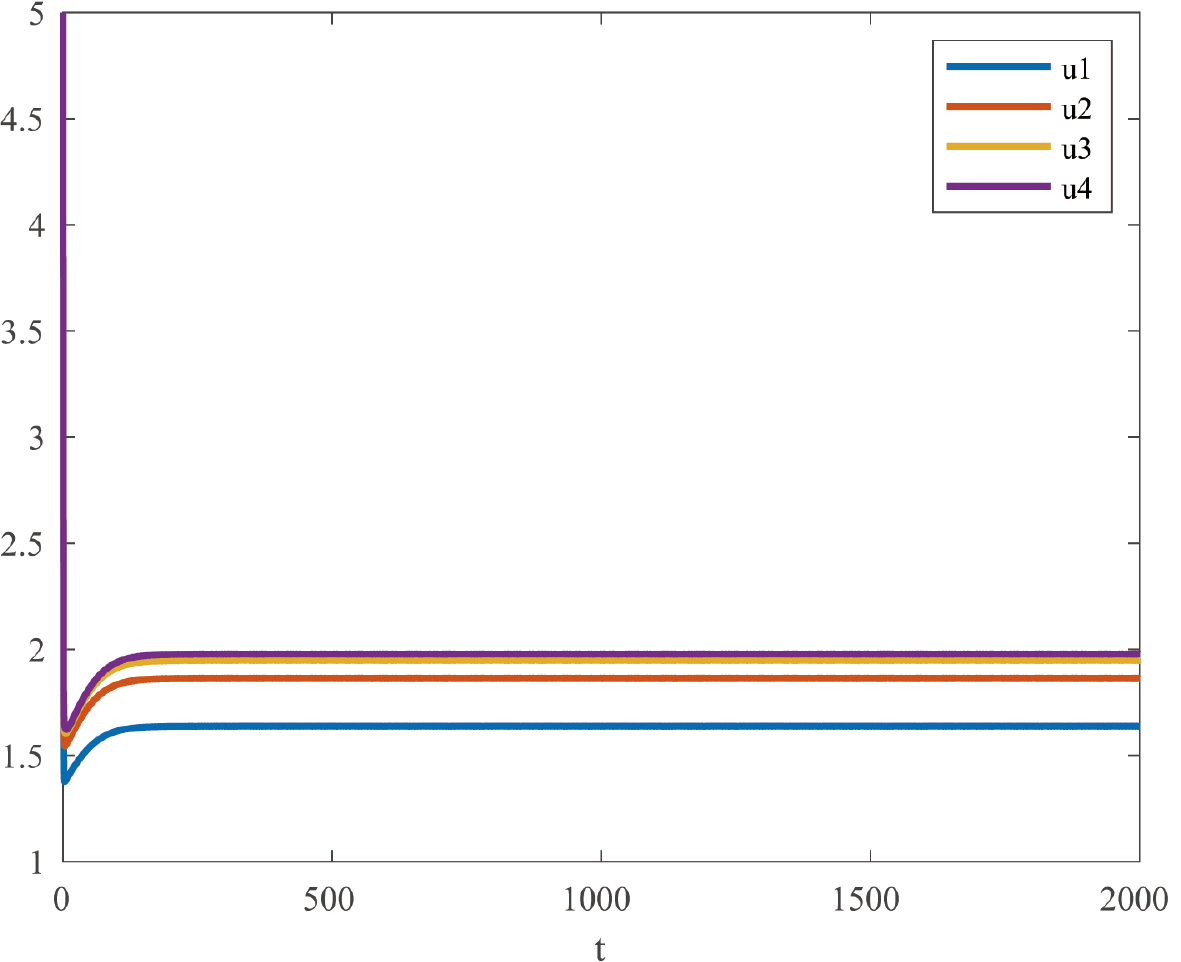}
}
\subfigure[]{
\includegraphics[height=2in]{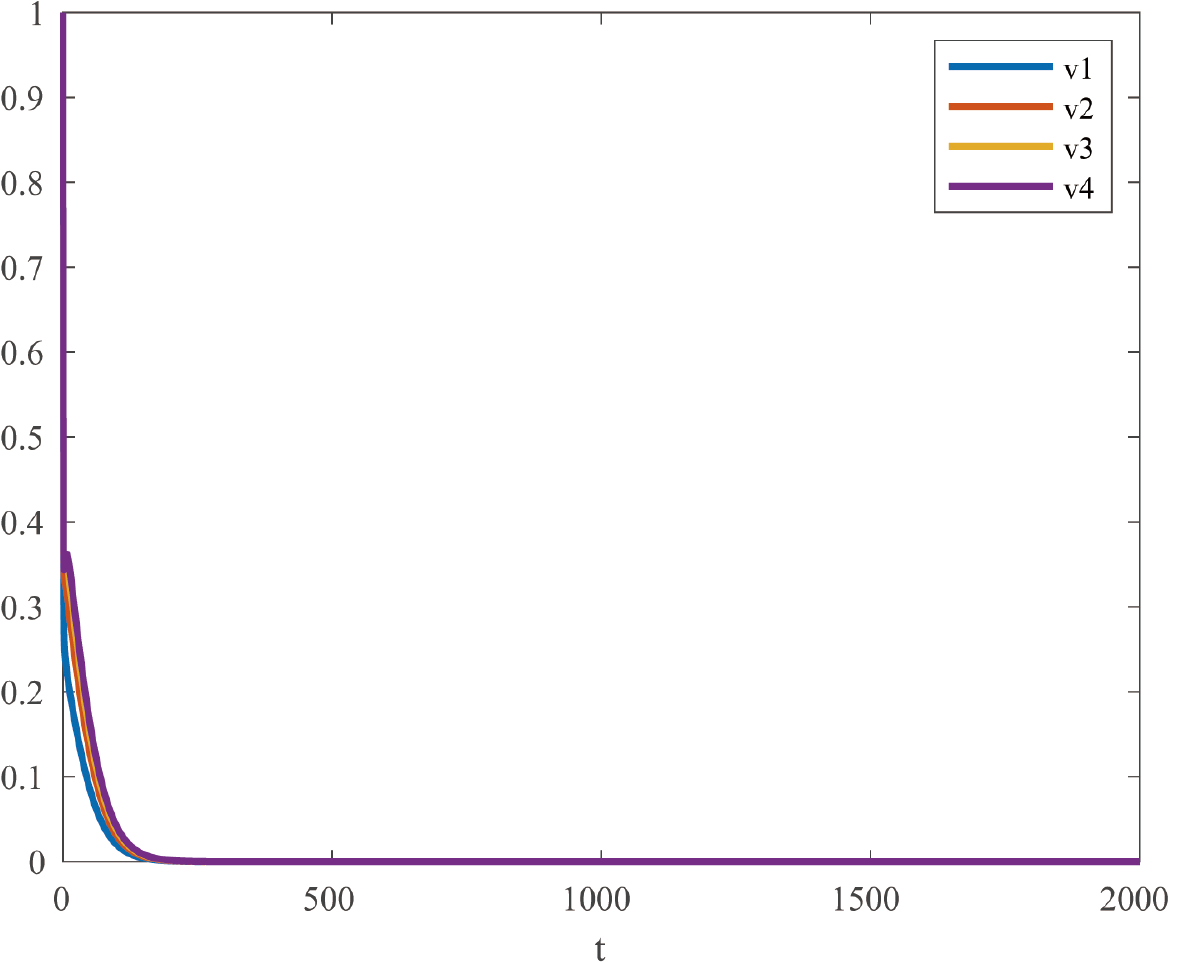}
}
\vskip -15pt
\caption{Solutions of \eqref{pat-cp} with $n=4$ for case (a). The parameters are $r=2$, $d_1=1$, $q_1=0.5$, $d_2=0.08$ and $q_2=0.44$. (a)-(b) Initial data: $\bm u(0)=(0.1, 0.1, 0.1, 0.1)$, $\bm v(0)=(2, 2, 2, 2)$, and species $\bm v$ wins the competition; (c)-(d) Initial data: $\bm u(0)=(5, 5, 5, 5)$, $\bm v(0)=(1, 1, 1, 1)$, and species $\bm u$ wins the competition. This shows that the model has bi-stability in case (a).}
\label{fig_bi}
\vskip -8pt
\end{figure}

  \subsection{Simulations for case (b)}
We choose $d_1=1, q_1=0.5 \text{\ or\ } 3$ such that $E_1$ exists for case (b), and  we  plot the curve $q=q_{\bm{r-u^*}}^*(d)$ in Fig. \ref{figqb0}.
 \begin{figure}[h]
\centering
\subfiguretopcaptrue
\subfigure[]{
\includegraphics[height=2in]{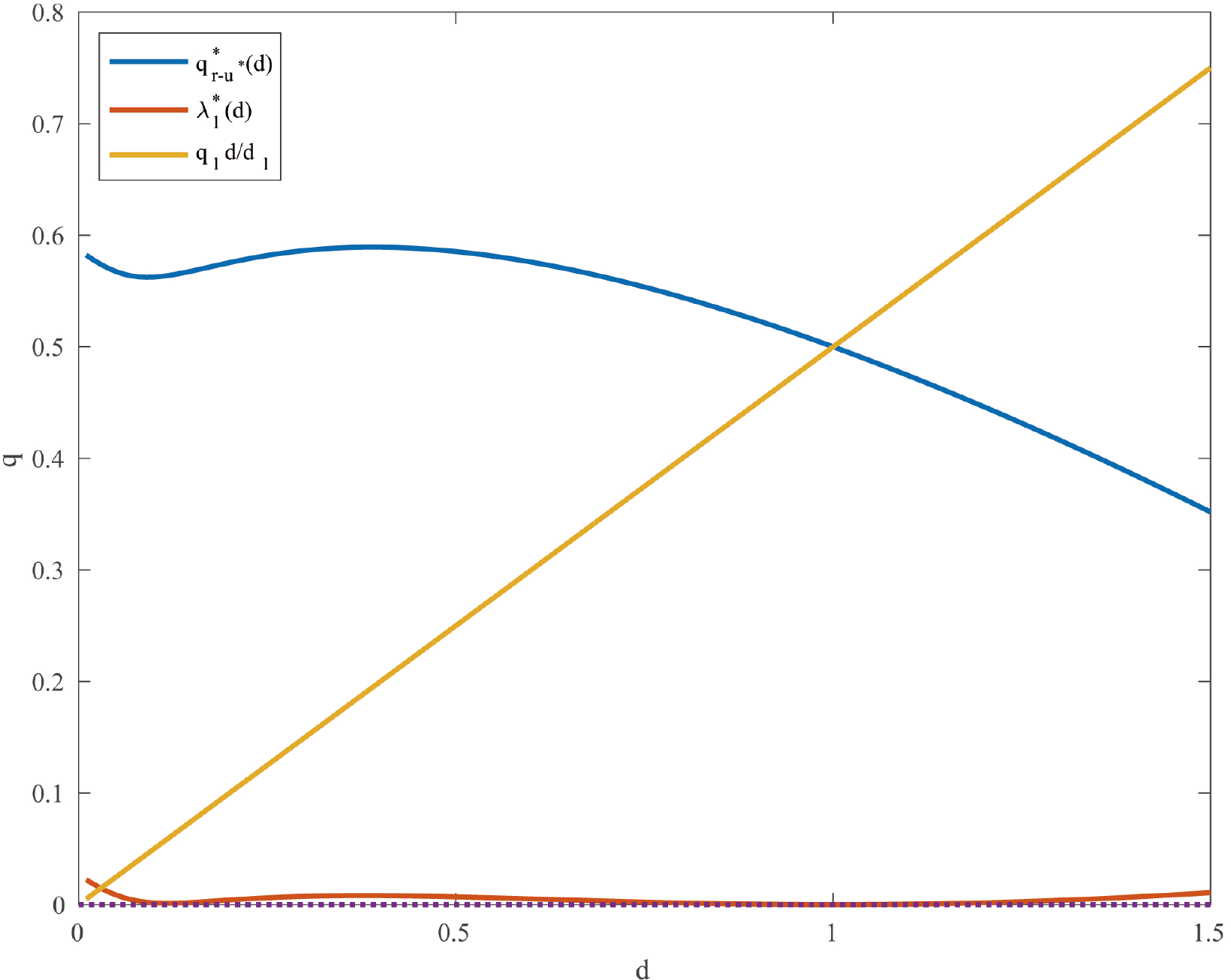}
}
\subfigure[]{
\includegraphics[height=2in]{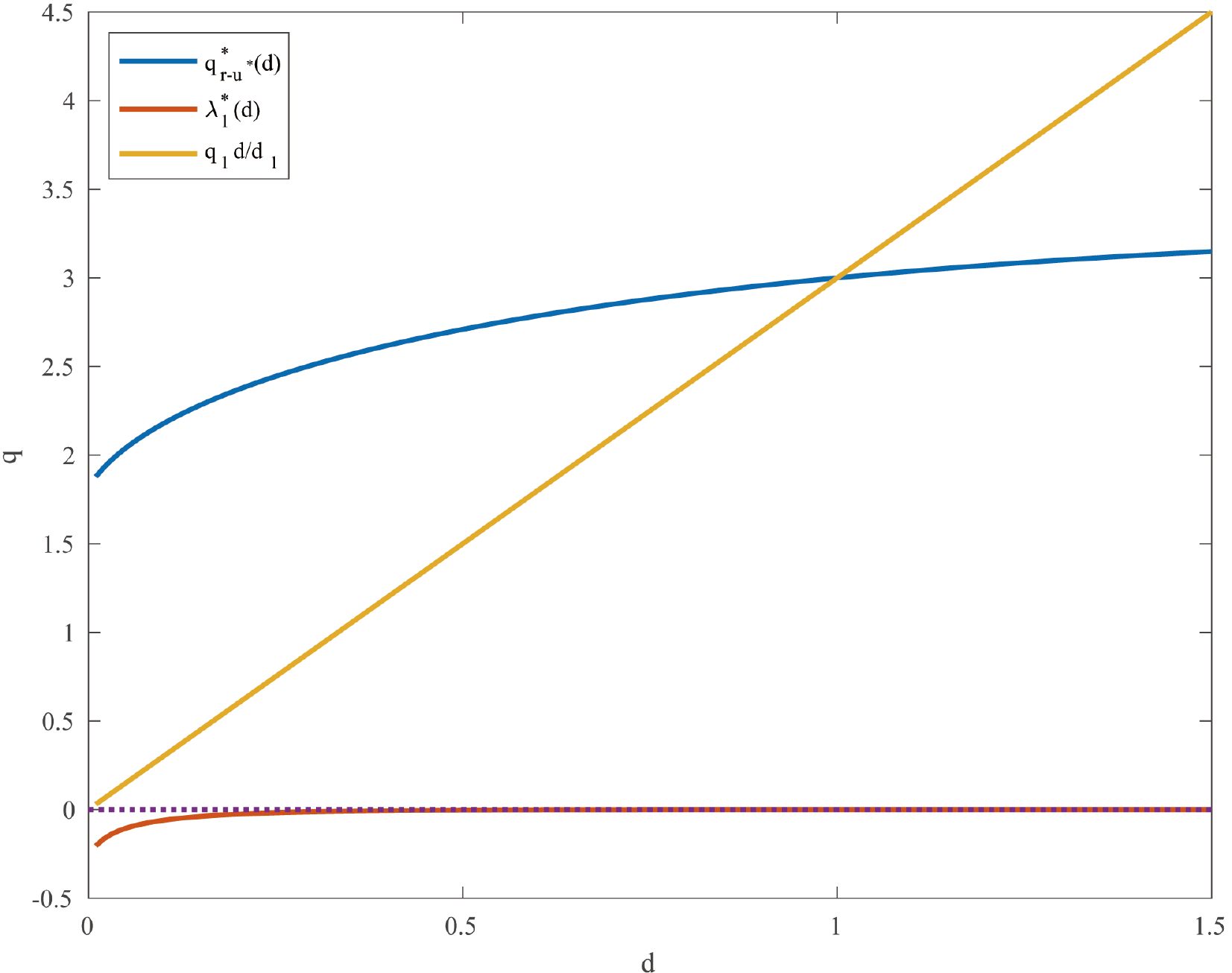}
}
\caption{Curve $q=q_{\bm{r-u^*}}^*(d)$  with $n=4$, $r=2$, $d_1=1$ for case (b). (Left) $q=0.5$; (right) $q=3$. The sign of the curve $\la^*_1(d)$ determines the stability of $E_2$ when $d_2=d$ and $q_2=q_{\bm{r-u^*}}^*(d)$.}
\label{figqb0}
\vskip -8pt
\end{figure}
 \begin{figure}[h]
\centering
\subfiguretopcaptrue
\subfigure[]{
\includegraphics[height=2in]{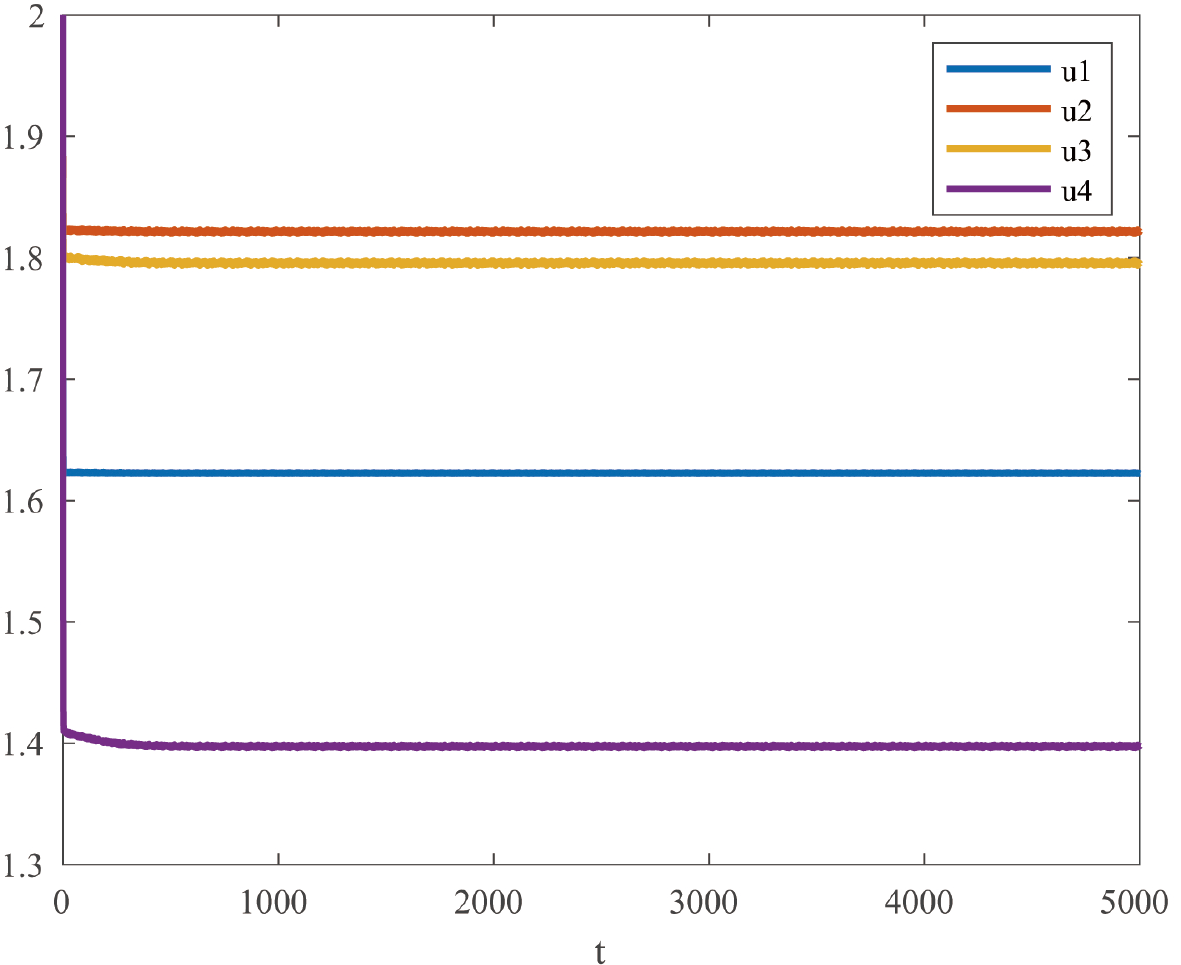}
}
\subfigure[]{
\includegraphics[height=2in]{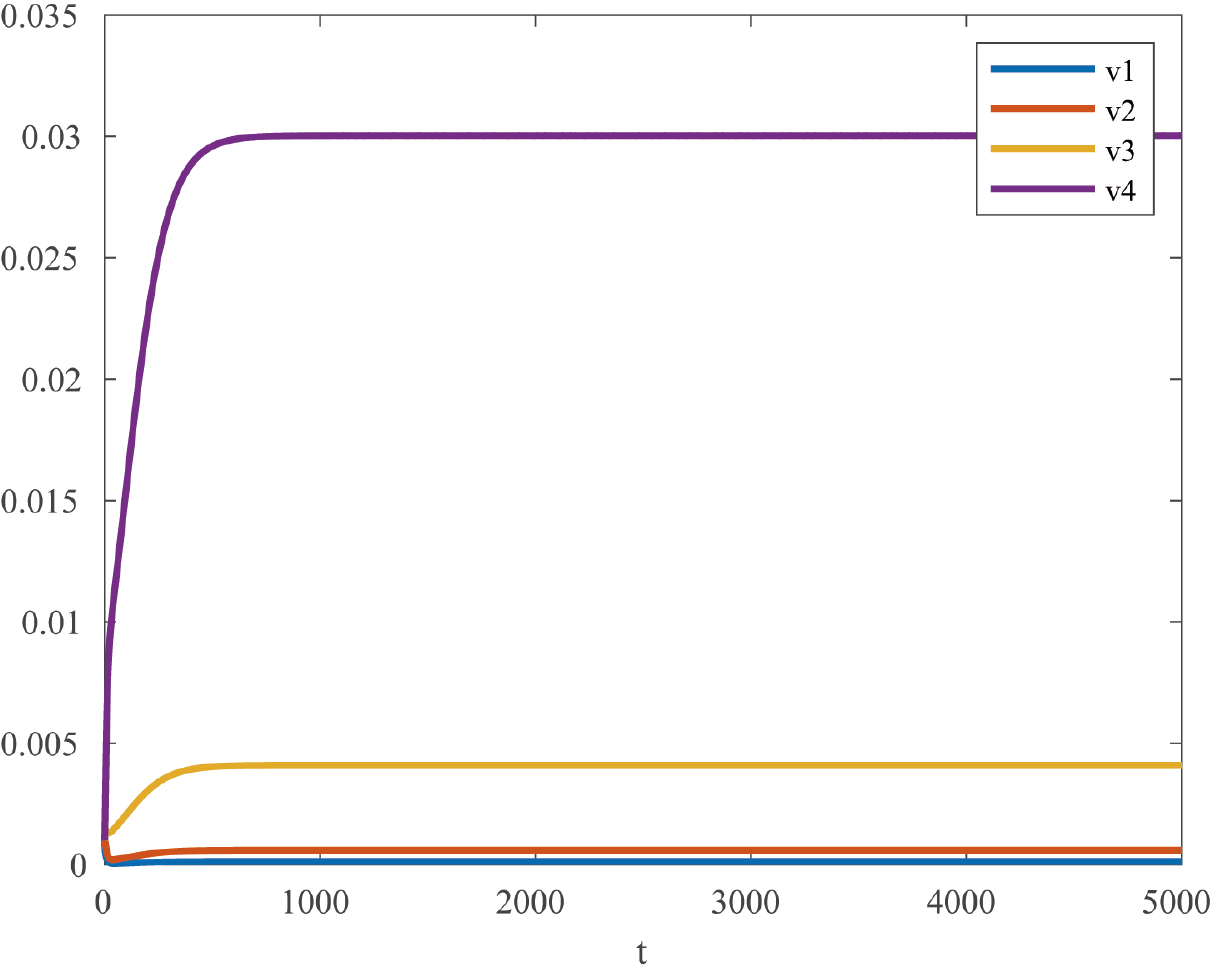}
}
\vskip -15pt
\caption{Solutions of \eqref{pat-cp} with $n=4$ for case (b). The parameters are $r=2$, $d_1=1$, $q_1=0.5$, $d_2=0.05$ and $q_2=0.555$. The two species seem to coexist.}
\label{fig_co}
\vskip -8pt
\end{figure}
By Proposition \ref{theorem_iv2}, $E_1$ is stable if $(d_2, q_2)$ is above the curve  $q=q_{\bm{r-u^*}}^*(d)$ and unstable if it is below the curve. In Fig. \ref{figqb0}, the sign of $\la^*_1(d)$ seems to be positive for $q_1=0.5$ and negative for $q_1=3$.  If $q_1=0.5$ and $(d_2, q_2)$ is sufficiently close  to but below the curve $q=q_{\bm{r-u^*}}^*(d)$, then both
 $E_1$ and  $E_2$ are unstable and we should have coexistence. To confirm this, we choose $(d_2, q_2)=(0.05, 0.555)$. In Fig. \ref{fig_co}, we plot the solutions of \eqref{pat-cp}, and it appears that the two species coexist. If $q_1=3$ and $(d_2, q_2)$ is sufficiently close  to but above the curve $q=q_{\bm{r-u^*}}^*(d)$, we should have bi-stability (we omit the graphs here since they are similar to the ones in case (a) shown in Fig. \ref{fig_bi}). Our simulations show that both bi-stability and coexistence can occur for case (b).


\subsection{Evolution of dispersal for case (a) and (b)}
Suppose that species $\bm v$ is the mutating species, and $(d_2, q_2)$ is close to but not equal to $(d_1, q_1)$.  When the model is coupled with free-flow boundary conditions (case (a)), we always have $\left[q_{\bm{r-u^*}}^*(d)\right]'|_{d=\bar d_1}>0$.
Biologically, this means that the mutating species can invade if and only if it has  a larger diffusion rate.

If  the model is coupled with hostile boundary conditions (case (b)), the dynamics of the model is more complicated. In Fig. \ref{figqb0},  we can see that the sign of $\left[q_{\bm{r-u^*}}^*(d)\right]'|_{d=\bar d_1}$ changes from negative to positive when $q$ increased from $0.5$ to $3$. Biologically,  when the advection rate is small ($q_1=0.5$), the mutating species $\bm v$ can invade if it has a smaller diffusion rate; however when the advection rate is large ($q_1=3$), the mutating species $\bm v$ may need to have a larger diffusion rate than the resident species $\bm u$ to invade it. Therefore if the downstream end is coupled with hostile boundary conditions, whether smaller or larger diffusion rate is a better strategy for the species depends on the advection rate. We conjecture that when $d_1$ is small the sign of $\left[q_{\bm{r-u^*}}^*(d)\right]'|_{d=\bar d_1}$ for case (b)  changes from negative to positive as $q$ increases,  i.e. smaller diffusion rate is better when the advection rate is small, while larger diffusion rate is favored when  advection rate is large.  We also conjecture that when $d_1$ is sufficiently large then the sign of $\left[q_{\bm{r-u^*}}^*(d)\right]'|_{d=\bar d_1}$ for case (b)
is always negative, i.e. smaller diffusion rate is always better.

If we fix $q_1=2$ for case (b), as shown in Fig. \ref{figqb}, the sign of $\left[q_{\bm{r-u^*}}^*(d)\right]'|_{d=\bar d_1}$ changes from positive to negative when $q_1$ increased from $0.5$ to $2$. Therefore, there exists $\bar d_1\in (0.5, 2)$ such that $\left[q_{\bm{r-u^*}}^*(d)\right]'|_{d=\bar d_1}=0$. Moreover, the sign of $\left[q_{\bm{r-u^*}}^*(d)\right]'|_{d=\bar d_1}$ switches from positive to negative at $d_1=\bar d_1$. This suggests that $d_1=\bar d_1$ may be a convergence stable strategy. We conjecture that for each advection rate $q_1\in (0, r)$, there exists a unique intermediate diffusion rate, which is a convergence stable strategy. We remark that if $n=2$ the authors in \cite{xiang2019evolutionarily} have shown that for each $q_1\in (0, r)$ there exists a unique evolutionary stable strategy for $d_1$.

 \begin{figure}[h]
\centering
\subfiguretopcaptrue
\subfigure[]{
\includegraphics[height=2in]{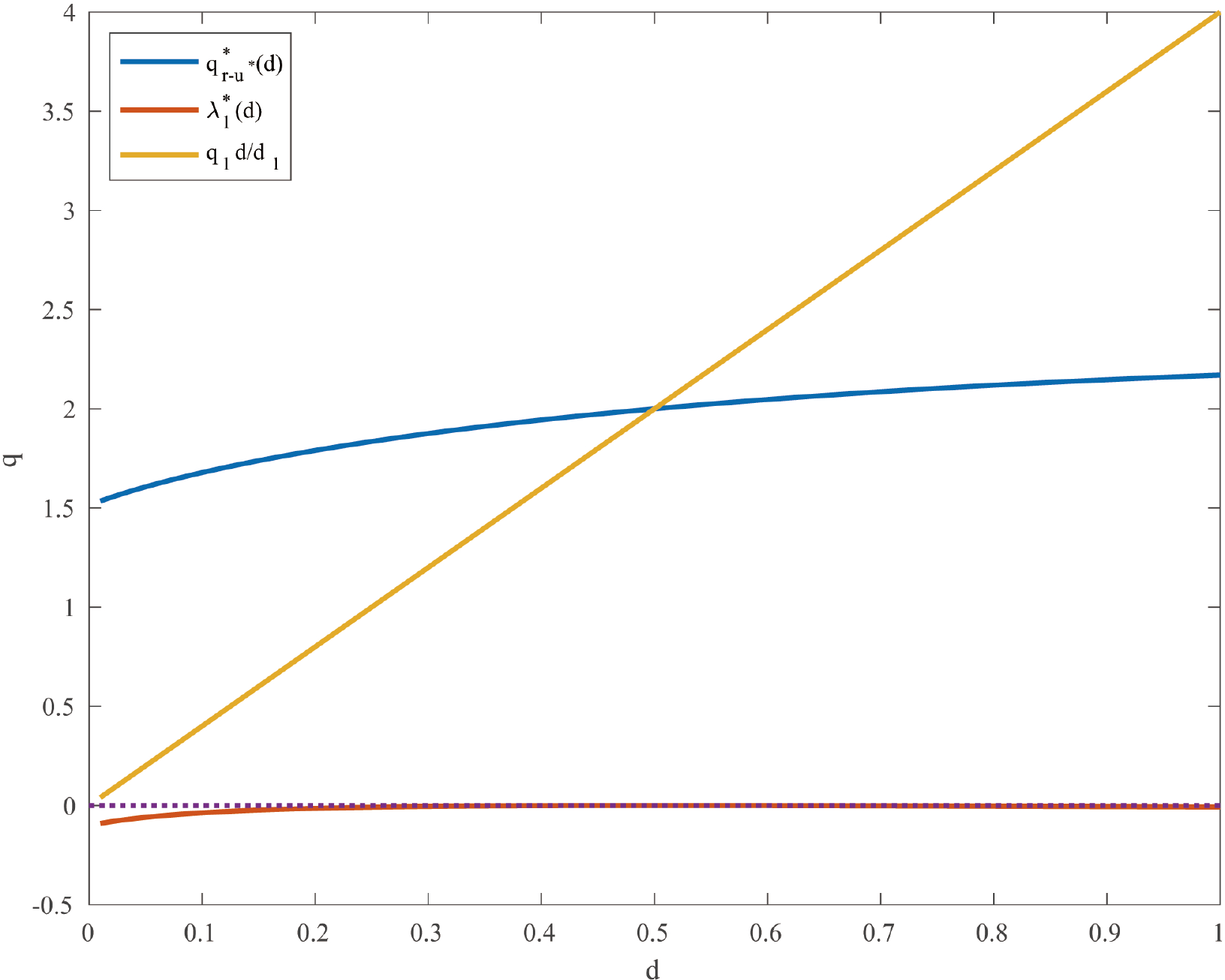}
}
\subfigure[]{
\includegraphics[height=2in]{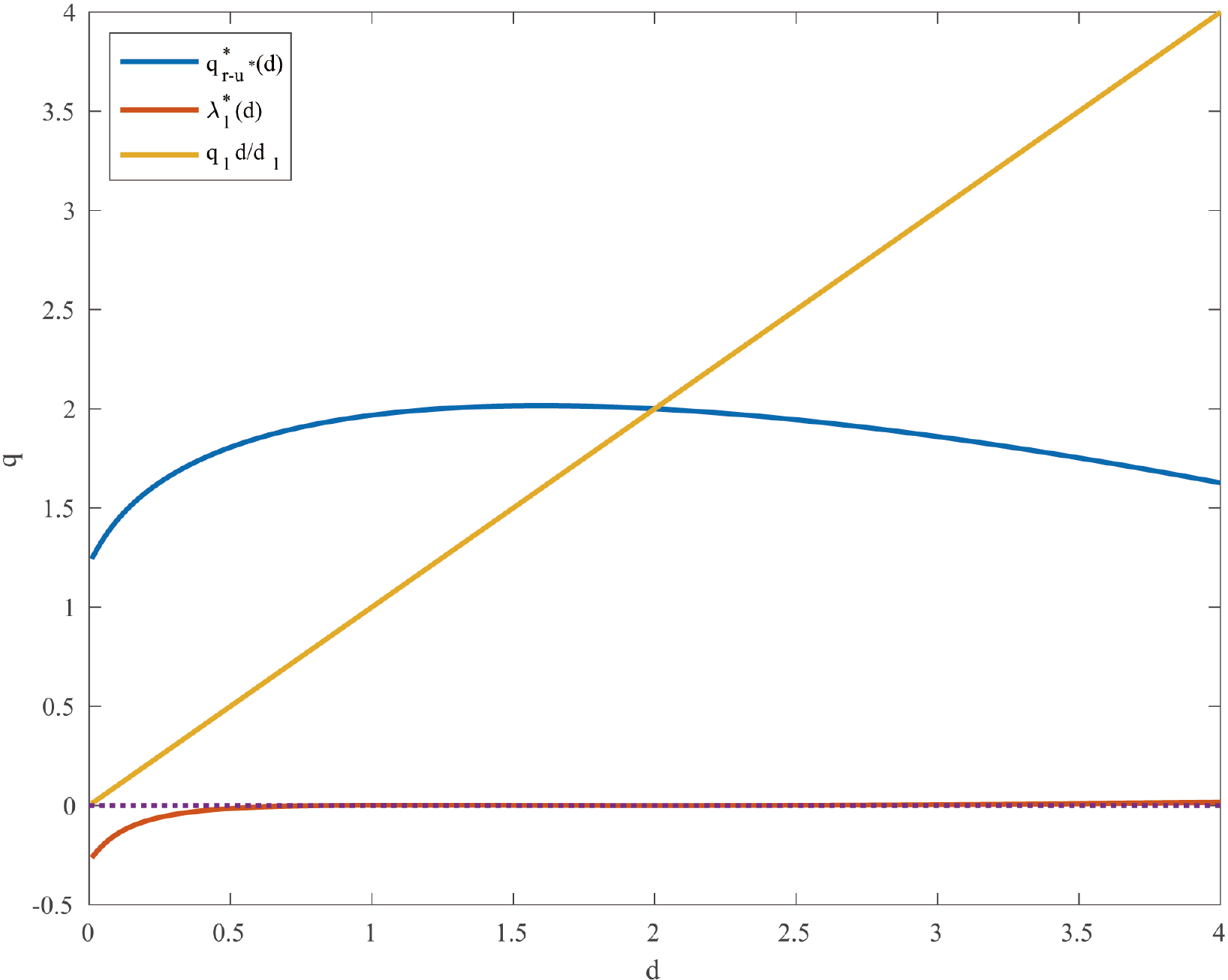}
}
\vskip -15pt
\caption{Curve $q=q_{\bm{r-u^*}}^*(d)$  with $n=4$, $r=2$, $q_1=2$ for case (b). The first figure is for $d_1=0.5$ and the second one for $d_1=2$. The sign of the curve $\la^*_1(d)$ determines the stability of $(\bm 0, \bm v^*)$ when $d_2=d$ and $q_2=q_{\bm{r-u^*}}^*(d)$.}
\label{figqb}
\vskip -8pt
\end{figure}

\FloatBarrier

\vspace{10pt}


\begin{thebibliography}{10}

\bibitem{altenberg2012resolvent}
L.~Altenberg.
\newblock Resolvent positive linear operators exhibit the reduction phenomenon.
\newblock {\em Proc. Natl. Acad. Sci. USA}, 109(10):3705--3710, 2012.

\bibitem{berman1994nonnegative}
A.~Berman and R.~J. Plemmons.
\newblock {\em Nonnegative Matrices in the Mathematical Sciences}, volume~9 of
  {\em Classics in Applied Mathematics}.
\newblock Society for Industrial and Applied Mathematics (SIAM), Philadelphia,
  PA, 1994.

\bibitem{cantrell2004spatial}
R.~S. Cantrell and C.~Cosner.
\newblock {\em Spatial Ecology via Reaction-Diffusion Equations}.
\newblock John Wiley \& Sons, 2004.

\bibitem{stephen2007ideal}
R.~S. Cantrell, C.~Cosner, D.~L. Deangelis, and V.~Padron.
\newblock The ideal free distribution as an evolutionarily stable strategy.
\newblock {\em J. Biol. Dyn.}, 1(3):249--271, 2007.

\bibitem{cantrell2012evolutionary}
R.~S. Cantrell, C.~Cosner, and Y.~Lou.
\newblock Evolutionary stability of ideal free dispersal strategies in patchy
  environments.
\newblock {\em J. Math. Biol.}, 65(5):943--965, 2012.

\bibitem{cantrell2017evolution}
R.~S. Cantrell, C.~Cosner, Y.~Lou, and S.~J. Schreiber.
\newblock Evolution of natal dispersal in spatially heterogeneous environments.
\newblock {\em Math. Biosci.}, 283:136--144, 2017.

\bibitem{chen2021global}
S.~Chen, J.~Shi, Z.~Shuai, and Y.~Wu.
\newblock Global dynamics of a {L}otka-{V}olterra competition patch model.
\newblock {\em Nonlinearity}, 35(2):817--842, 2022.

\bibitem{chen2019spectral}
S.~Chen, J.~Shi, Z.~Shuai, and Y.~Wu.
\newblock Two novel proofs of spectral monotonicity of perturbed essentially
  nonnegative matrices with applications in population dynamics.
\newblock {\em SIAM J. Appl. Math.}, 82(2):654--676, 2022.

\bibitem{cheng2019coexistence}
C.-Y. Cheng, K.-H. Lin, and C.-W. Shih.
\newblock Coexistence and extinction for two competing species in patchy
  environments.
\newblock {\em Math. Biosci. Eng.}, 16(2):909--946, 2019.

\bibitem{cosner1996variability}
C.~Cosner.
\newblock Variability, vagueness and comparison methods for ecological models.
\newblock {\em Bull. Math. Biol.}, 58(2):207--246, 1996.

\bibitem{deangelis2016dispersal}
D.~L. DeAngelis, W.-M. Ni, and B.~Zhang.
\newblock Dispersal and spatial heterogeneity: single species.
\newblock {\em J. Math. Biol.}, 72(1):239--254, 2016.

\bibitem{dieckmann1996dynamical}
U.~Dieckmann and R.~Law.
\newblock The dynamical theory of coevolution: a derivation from stochastic
  ecological processes.
\newblock {\em J. Math. Biol.}, 34(5):579--612, 1996.

\bibitem{dockery1998evolution}
J.~Dockery, V.~Hutson, K.~Mischaikow, and M.~Pernarowski.
\newblock The evolution of slow dispersal rates: a reaction diffusion model.
\newblock {\em J. Math. Biol.}, 37(1):61--83, 1998.

\bibitem{geritz1998evolutionarily}
S.~Geritz, E.~Kisdi, G.~Mesze, and J.~A.~J. Metz.
\newblock Evolutionarily singular strategies and the adaptive growth and
  branching of the evolutionary tree.
\newblock {\em Evol. Biol.}, 12(1):35--57, 1998.

\bibitem{gourley2005two}
S.~A. Gourley and Y.~Kuang.
\newblock Two-species competition with high dispersal: the winning strategy.
\newblock {\em Math. Biosci. Eng.}, 2(2):345--362, 2005.

\bibitem{Hamida2017}
Y.~Hamida.
\newblock The evolution of dispersal for the case of two patches and
  two-species with travel loss.
\newblock Master's thesis, The Ohio State University, 2017.

\bibitem{hastings1983can}
A.~Hastings.
\newblock Can spatial variation alone lead to selection for dispersal?
\newblock {\em Theoret. Population Biol.}, 24(3):244--251, 1983.

\bibitem{hess}
P.~Hess.
\newblock {\em Periodic-Parabolic Boundary Value Problems and Positivity},
  volume 247 of {\em Pitman Research Notes in Mathematics Series}.
\newblock Longman Scientific \& Technical, Harlow, 1991.

\bibitem{hsu1996competitive}
S.~B. Hsu, H.~L. Smith, and P.~Waltman.
\newblock Competitive exclusion and coexistence for competitive systems on
  ordered {B}anach spaces.
\newblock {\em Trans. Amer. Math. Soc.}, 348(10):4083--4094, 1996.

\bibitem{HuangJin}
Q.-H. Huang, Y.~Jin, and M.~A. Lewis.
\newblock {$R_0$} analysis of a {B}enthic-drift model for a stream population.
\newblock {\em SIAM J. Appl. Dyn. Syst.}, 15(1):287--321, 2016.

\bibitem{jiang2020two}
H.~Jiang, K.-Y. Lam, and Y.~Lou.
\newblock Are two-patch models sufficient? {T}he evolution of dispersal and
  topology of river network modules.
\newblock {\em Bull. Math. Biol.}, 82(10):Paper No. 131, 42, 2020.

\bibitem{Jiang-Lam-Lou2021}
H.~Jiang, K.-Y. Lam, and Y.~Lou.
\newblock Three-patch models for the evolution of dispersal in advective
  environments: varying drift and network topology.
\newblock {\em Bull. Math. Biol.}, 83(10):1--46, 2021.

\bibitem{jin2011seasonal}
Y.~Jin and M.~A. Lewis.
\newblock Seasonal influences on population spread and persistence in streams:
  critical domain size.
\newblock {\em SIAM J. Appl. Math.}, 71(4):1241--1262, 2011.

\bibitem{keitt2001allee}
T.~H. Keitt, M.~A. Lewis, and R.~D. Holt.
\newblock Allee effects, invasion pinning, and species' borders.
\newblock {\em The American Naturalist}, 157(2):203--216, 2001.

\bibitem{kirkland2006evolution}
S.~Kirkland, C.-K. Li, and S.~J. Schreiber.
\newblock On the evolution of dispersal in patchy landscapes.
\newblock {\em SIAM J. Appl. Math.}, 66(4):1366--1382, 2006.

\bibitem{lam2015evolution}
K.~Y. Lam, Y.~Lou, and F.~Lutscher.
\newblock Evolution of dispersal in closed advective environments.
\newblock {\em J. Biol. Dyn.}, 9(suppl. 1):188--212, 2015.

\bibitem{lam2016emergence}
K.~Y. Lam, Y.~Lou, and F.~Lutscher.
\newblock The emergence of range limits in advective environments.
\newblock {\em SIAM J. Appl. Math.}, 76(2):641--662, 2016.

\bibitem{lam2016remark}
K.-Y. Lam and D.~Munther.
\newblock A remark on the global dynamics of competitive systems on ordered
  {B}anach spaces.
\newblock {\em Proc. Amer. Math. Soc.}, 144(3):1153--1159, 2016.

\bibitem{LAM2016Munther}
K.-Y. Lam and D.~Munther.
\newblock A remark on the global dynamics of competitive systems on ordered
  {B}anach spaces.
\newblock {\em Proc. Amer. Math. Soc.}, 144(3):1153--1159, 2016.

\bibitem{levin1976population}
S.~A. Levin.
\newblock Population dynamic models in heterogeneous environments.
\newblock {\em Annu. Rev. Ecol. Syst.}, pages 287--310, 1976.

\bibitem{levin1984dispersal}
S.~A. Levin, D.~Cohen, and A.~Hastings.
\newblock Dispersal strategies in patchy environments.
\newblock {\em Theoret. Population Biol.}, 26(2):165--191, 1984.

\bibitem{Li2002JMB}
C.-K. Li and H.~Schneider.
\newblock Applications of {P}erron-{F}robenius theory to population dynamics.
\newblock {\em J. Math. Biol.}, 44(5):450--462, 2002.

\bibitem{li2010global}
M.~Y. Li and Z.~Shuai.
\newblock Global-stability problem for coupled systems of differential
  equations on networks.
\newblock {\em J. Differential Equations}, 248(1):1--20, 2010.

\bibitem{lin2014global}
K.-H. Lin, Y.~Lou, C.-W. Shih, and T.-H. Tsai.
\newblock Global dynamics for two-species competition in patchy environment.
\newblock {\em Math. Biosci. Eng.}, 11(4):947--970, 2014.

\bibitem{lou2019ideal}
Y.~Lou.
\newblock Ideal free distribution in two patches.
\newblock {\em J. Nonlinear Model Anal.}, 2:151--167, 2019.

\bibitem{lou2014evolution}
Y.~Lou and F.~Lutscher.
\newblock Evolution of dispersal in open advective environments.
\newblock {\em J. Math. Biol.}, 69(6-7):1319--1342, 2014.

\bibitem{lou2018coexistence}
Y.~Lou, H.~Nie, and Y.~Wang.
\newblock Coexistence and bistability of a competition model in open advective
  environments.
\newblock {\em Math. Biosci.}, 306:10--19, 2018.

\bibitem{LouNie2018}
Y.~Lou, H.~Nie, and Y.~Wang.
\newblock Coexistence and bistability of a competition model in open advective
  environments.
\newblock {\em Math. Biosci.}, 306:10--19, 2018.

\bibitem{lou2016qualitative}
Y.~Lou, D.-M. Xiao, and P.~Zhou.
\newblock Qualitative analysis for a {L}otka-{V}olterra competition system in
  advective homogeneous environment.
\newblock {\em Discrete Contin. Dyn. Syst.}, 36(2):953--969, 2016.

\bibitem{lou2015evolution}
Y.~Lou and P.~Zhou.
\newblock Evolution of dispersal in advective homogeneous environment: the
  effect of boundary conditions.
\newblock {\em J. Differential Equations}, 259(1):141--171, 2015.

\bibitem{Lu1993}
Z.~Y. Lu and Y.~Takeuchi.
\newblock Global asymptotic behavior in single-species discrete diffusion
  systems.
\newblock {\em J. Math. Biol.}, 32(1):67--77, 1993.

\bibitem{lutscher2006effects}
F.~Lutscher, M.~A. Lewis, and E.~McCauley.
\newblock Effects of heterogeneity on spread and persistence in rivers.
\newblock {\em Bull. Math. Biol.}, 68(8):2129--2160, 2006.

\bibitem{lutscher2007spatial}
F.~Lutscher, E.~McCauley, and M.~A. Lewis.
\newblock Spatial patterns and coexistence mechanisms in systems with
  unidirectional flow.
\newblock {\em Theoret. Population Biol.}, 71(3):267--277, 2007.

\bibitem{lutscher2005effect}
F.~Lutscher, E.~Pachepsky, and M.~A. Lewis.
\newblock The effect of dispersal patterns on stream populations.
\newblock {\em SIAM Rev.}, 47(4):749--772 (electronic), 2005.

\bibitem{ma2020evolution}
L.~Ma and D.~Tang.
\newblock Evolution of dispersal in advective homogeneous environments.
\newblock {\em Discrete Contin. Dyn. Syst.}, 40(10):5815--5830, 2020.

\bibitem{mcpeek1992evolution}
M.~A. McPeek and R.~D. Holt.
\newblock The evolution of dispersal in spatially and temporally varying
  environments.
\newblock {\em The American Naturalist}, 140(6):1010--1027, 1992.

\bibitem{noble2015evolution}
L.~Noble.
\newblock {\em Evolution of Dispersal in Patchy Habitats}.
\newblock PhD thesis, The Ohio State University, 2015.

\bibitem{Owen2001}
M.~R. Owen and M.~A. Lewis.
\newblock How predation can slow, stop or reverse a prey invasion.
\newblock {\em Bull. Math. Biol.}, 63(4):655--684, 2001.

\bibitem{smith2008monotone}
H.~L. Smith.
\newblock {\em Monotone {D}ynamical {S}ystems: {A}n {I}ntroduction to the
  {T}heory of {C}ompetitive and {C}ooperative {S}ystems}.
\newblock American Mathematical Society, Providence, RI, 1995.

\bibitem{speirs2001population}
D.~C. Speirs and W.~S.~C. Gurney.
\newblock Population persistence in rivers and estuaries.
\newblock {\em Ecology}, 82(5):1219--1237, 2001.

\bibitem{vasilyeva2011population}
O.~Vasilyeva and F.~Lutscher.
\newblock Population dynamics in rivers: analysis of steady states.
\newblock {\em Can. Appl. Math. Q.}, 18(4):439--469, 2010.

\bibitem{vasilyeva2012flow}
O.~Vasilyeva and F.~Lutscher.
\newblock How flow speed alters competitive outcome in advective environments.
\newblock {\em Bull. Math. Biol.}, 74(12):2935--2958, 2012.

\bibitem{xiang2019evolutionarily}
J.-J. Xiang and Y.~Fang.
\newblock Evolutionarily stable dispersal strategies in a two-patch advective
  environment.
\newblock {\em Discrete Contin. Dyn. Syst. Ser. B}, 24(4):1875--1887, 2019.

\bibitem{yan2022competition}
X.~Yan, H.~Nie, and P.~Zhou.
\newblock On a competition-diffusion-advection system from river ecology:
  Mathematical analysis and numerical study.
\newblock {\em SIAM J. Appl. Dyn. Syst.}, 21(1):438--469, 2022.

\bibitem{zhao2016lotka}
X.-Q. Zhao and P.~Zhou.
\newblock On a {L}otka-{V}olterra competition model: the effects of advection
  and spatial variation.
\newblock {\em Calc. Var. Partial Differential Equations}, 55(4):Art. 73, 25,
  2016.

\bibitem{zhou2016lotka}
P.~Zhou.
\newblock On a {L}otka-{V}olterra competition system: diffusion vs advection.
\newblock {\em Calc. Var. Partial Differential Equations}, 55(6):Art. 137, 29,
  2016.

\bibitem{zhou2018global}
P.~Zhou and X.-Q. Zhao.
\newblock Global dynamics of a two species competition model in open stream
  environments.
\newblock {\em J. Dyn. Differential Equations}, 30(2):613--636, 2018.

\end{thebibliography}

\end{document}